\newcommand{\bs}{\boldsymbol}
\def\cequiv{\raisebox{-1.5mm}{$\;\stackrel{\raisebox{-3.9mm}{=}}{{\sim}}\;$}}
\def\uphi{\undertilde{\varphi}}
\def\upsi{\undertilde{\psi}}
\def\uf{\undertilde{f}}
\def\ur{\undertilde{r}}
\def\us{\undertilde{s}}
\def\uu{\undertilde{u}}
\def\uv{\undertilde{v}}
\def\ux{\undertilde{x}}
\def\curl{{\rm curl}}
\def\dv{{\rm div}}
\def\vgm12{\bs{V}^{1+,2}_{\gamma,M}}
\newtheorem{theorem}{Theorem}
\newtheorem{remark}[theorem]{Remark}
\newtheorem{lemma}[theorem]{Lemma}
\newcounter{mnote}
\let\oldmarginpar\marginpar
\renewcommand\marginpar[1]{\-\oldmarginpar[\raggedleft\footnotesize #1]%
  {\raggedright\footnotesize #1}}
\begin{document}

\title{Amiable mixed schemes for fourth order curl equations}
\author{Shuo Zhang}
\address{LSEC, Institute of Computational Mathematics and Scientific/Engineering Computing, Academy of Mathematics and System Sciences, Chinese Academy of Sciences, Beijing 100190, People's Republic of China}
\email{szhang@lsec.cc.ac.cn}
\thanks{The author is supported partially by the National Natural Science Foundation of China with Grant No. 11471026 and National Centre for Mathematics and Interdisciplinary Sciences, Chinese Academy of Sciences.}

\subjclass[2000]{65N30, 35Q60, 76E25, 76W05}


\keywords{fourth order curl equation; mixed scheme; regularity analysis;  finite element method}

\begin{abstract} 
In this paper, amiable mixed schemes are presented for two variants of fourth order curl equations. Specifically, mixed formulations for the problems are constructed, which are well-posed in Babu\v{s}ka-Brezzi's sense and admit stable discretizations by finite element spaces of low smoothness and of low degree. The regularities of the mixed formulations and thus equivalently the primal problems are established, and some finite elements examples are given which can exploit the regularity of the solutions to an optimal extent.
\end{abstract}

\maketitle


%
%
%
\section{Introduction}

In this paper, we study the boundary value problem of the fourth order curl operator of type
\begin{equation}\label{eq:bvpA}
{\bf(A)}\quad\left\{
\begin{array}{rl}
(\nabla\times)^4\uu=\uf,&\mbox{in}\ \Omega;
\\
\nabla\cdot\uu=0 & \mbox{in}\ \Omega;
\\
\uu\times\mathbf{n}=\undertilde{0},\ (\nabla\times \uu)\times \mathbf{n}=\undertilde{0} & \mbox{on}\ \partial\Omega,
\end{array}
\right.
\end{equation}
where $\dv\uf=0$, and of variant type
\begin{equation}\label{eq:bvpB}
{\bf(B)}\quad\left\{
\begin{array}{rl}
(\nabla\times)^4\uu+\uu=\uf,&\mbox{in}\ \Omega;
\\
\uu\times\mathbf{n}=\undertilde{0},\ (\nabla\times \uu)\times \mathbf{n}=\undertilde{0} & \mbox{on}\ \partial\Omega.
\end{array}
\right.
\end{equation}
For \eqref{eq:bvpB}, it is not necessary that $\dv\uf=0$. But evidently, $\dv\uu=0$ when $\dv\uf=0$. 

The boundary value problem of fourth order curl operator $(\nabla\times)^4$ arises in different applications, like in magnetohydrodynamics(MHD) and in the inverse electromagnetic scattering theory. In MHD, $(\nabla\times)^4\bf{B}$ is involved in the resistive system where $\bf{B}$ is the magnetic field as a primary variable\cite{Zheng.B;Hu.Q;Xu.J2011}, and in the inverse electromagnetic scattering theory, $(\nabla\times)^4$ appears in computing the transmission eigenvalue\cite{Cakoni.F;Haddar.H2007}. Some more applications of $(\nabla\times)^4$ can be found in the sequel works. The divergence free condition and the boundary conditions as in \eqref{eq:bvpA} are generally used. The two types of equations as above were discussed in, e.g., \cite{Sun.J2016,Nicaise.S2016} and \cite{Zheng.B;Hu.Q;Xu.J2011,Hong.Q;Hu.J;Shu.S;Xu.J2012}, respectively.

There have been works devoted to the discretization of the model problems. For the primal variational formulation, two kinds of not-conforming discretizations are discussed in literature, including a nonconforming element constructed in Zheng-Hu-Xu \cite{Zheng.B;Hu.Q;Xu.J2011}, and discretizations  given in Hong-Hu-Shu-Xu \cite{Hong.Q;Hu.J;Shu.S;Xu.J2012} with standard high order Nedelec elements in the framework of discontinuous Galerkin method. So far, no curl-curl-conforming other than the $H^2$-conforming finite element is known to us. An alternative approach is to introduce and deal with mixed/order-reduced formulation. It is natural to consider possibly the operator splitting technique which introduces an intermediate variable and then reduce the original problem to a system of second order equations. This is the way adopted by Sun \cite{Sun.J2016}. The associated eigenvalue problem is also discussed therein. Beyond these discussions, few results on the discretisation are known to us.

The operator $(\nabla\times)^4$ is of fourth order and not completely symmetric; this makes the model problems bear complicated intrinsic structure. Primarily, the multiple high stiffness effects the property of the problems. Very recently, Nicaise \cite{Nicaise.S2016} studies the boundary value problem \eqref{eq:bvpA}, and proves that the solution does not generally belong to $\undertilde{H}^3(\Omega)$ on polyhedons, and the $\undertilde{H}^2(\Omega)$ regularity of the solution of \eqref{eq:bvpA} is still open even on convex polyhedrons. The multiple high stiffness also makes the concentrative construction of finite element functions difficult. Moreover, the structure of the finite element spaces are very complicated, which makes designing optimal solvers/multilevel methods difficult; there has been no discussion along this line. The order-reduced discretisation scheme by \cite{Sun.J2016} enables to utilise the existing edge element to solve the original problem; this scheme can be viewed as an analogue of the Ciarlet-Raviart's scheme \cite{Ciarlet.P;Raviart.P1974} for biharmonic equation in the context of fourth order curl problem. However, the structure has not become friendlier with this formulation. The stability analysis has not been presented in \cite{Sun.J2016}, and thus the intrinsic topology is not clear and the convergence analysis is constructed in quite a technical way there. Further, the internal structure of the finite element space is not yet clear either, and thus designing optimal solvers/multilevel methods for the scheme is also difficult.

At the current situation, in this paper, we introduce new mixed formulations to figure out and utilise a clear and amiable structure. By bringing in auxiliary variables for the problems {\bf(A)} and {\bf(B)}, we present mixed formulations which are stable in Babu\v{s}ka-Brezzi's sense on the spaces of $L^2$, $H(\curl)$ and $H^1$ types. Also, we establish the regularity results for the mixed formulations on convex polyhedral domains. As the mixed formulations are equivalent to the primal ones, the $\undertilde{H}{}^2(\Omega)$ regularity of $\uu$ and $\nabla\times\uu$ are confirmed for \eqref{eq:bvpA} and \eqref{eq:bvpB} on convex polyhedrons, and the assumptions adopted in \cite{Sun.J2016} and \cite{Zheng.B;Hu.Q;Xu.J2011} are confirmed.  The mixed formulations admit amiable discretisation with finite element spaces corresponding to $L^2$, $H(\curl)$ and $H^1$ under some mild conditions, and the theoretical convergence analysis can be done in a standard friendly way. Several finite element examples are presented which can exploit the regularity of the solutions to an optimal extent. As the structures of the dicsretized $L^2$, $H(\curl)$ and $H^1$ spaces have been well-studied, the newly-developed discretisation scheme can be solved optimally by the aid of some existing optimal preconditioners\cite{Hiptmair.R;Xu.J2007,Xu.J1992,Xu.J2010,Ruesten.T;Winther.R1992}. Moreover, it is easy to find finite element spaces that are nested on nested grids, both algebraically and topologically with respect to the mixed formulation; this can bring convenience in designing further high-efficiency algorithms. 

We would emphasize the new variational problems \eqref{eq:nvpA}(for \eqref{eq:bvpA}) and \eqref{eq:nvpB}(for \eqref{eq:bvpB}) are the starting point of what we are going to do and what we are able to do. These new primal formulations arise from configurating the essential boundary conditions that should be satisfied by the solutions, and they differ from traditional ones, like ones discussed in \cite{Hong.Q;Hu.J;Shu.S;Xu.J2012}, \cite{Nicaise.S2016} or \cite{Sun.J2016}. The variational formulation \eqref{eq:nvpB} is similar to the one used in \cite{Zheng.B;Hu.Q;Xu.J2011}, but the original boundary condition discussed in \cite{Zheng.B;Hu.Q;Xu.J2011} is different from that of \eqref{eq:bvpB}. The new variational formulations possess enough capacity for the essential boundary conditions, and make the sequel analysis smoother. 

The remaining of the paper is organised as follows. In Section \ref{sec:pre}, we present some preliminaries and the model problems in the primal formulation. We will particularly figure out the appropriate spaces of the model problem by clarifying the boundary conditions and specify the space whose capacity is big enough for the boundary condition and the variational form. In Section \ref{sec:mf}, the mixed formulation of the model problems are given with stability analysis. Section \ref{sec:dis} is then devoted to the discretizations, including general discussion on the  conditions to be satisfied, and also some specific examples. Finally in Section \ref{sec:con}, concluding remarks are given.

\section{Model problems: New primal formulations}
\label{sec:pre}

\subsection{Preliminaries: Sobolev spaces and finite elements}

Let $\Omega\subset\mathbb{R}^3$ be a simply connected polyhedral domain, with boundary $\Gamma=\partial\Omega$, and unit outward norm vector $\mathbf{n}$. In this paper, we use the bold symbol for a vector in $\mathbb{R}^3$, and a subscript $\undertilde{~}$ for a vector valued function. We assume $\Gamma$ is also connected. We use $L^2(\Omega)$ and $H^t_{(0)}(\Omega)$ for $t=1,2,\dots$ for the standard Lebesque space and Sobolev spaces. Denote
\begin{eqnarray}
H^s(\curl,\Omega):=\{\uv\in (L^2(\Omega))^3:\curl^j\uv\in (L^2(\Omega))^3,\ 1\leqslant j\leqslant t\},\ t=1,2,\dots, 
\end{eqnarray}
equipped with the inner product $(\uu,\uv)_{H^t(\curl,\Omega)}=(\uu,\uv)+\sum_{j=1}^t(\curl^j\uu,\curl^j\uv),$ and the corresponding norm $\|\cdot\|_{H^t(\curl,\Omega)}$. Particularly, $H^1(\curl,\Omega)=H(\curl,\Omega)$, and $\|\cdot\|_{H(\curl,\Omega)}=\|\cdot\|_{\curl,\Omega}$. Similarly, define
\begin{equation}
H(\curl^2,\Omega):=\{\uv\in (L^2(\Omega))^3:\curl\curl\uv\in(L^3(\Omega))^2\},
\end{equation}
equipped with the inner product $(\uu,\uv)_{H(\curl^2,\Omega)}=(\uu,\uv)+(\curl\curl\uu,\curl\curl\uv)$ and the corresponding norm. Corresponding to the boundary condition, define $H^2_0(\curl,\Omega):=\{\uv\in H^2(\curl,\Omega):\uv\times\mathbf{n}=\undertilde{0}\ \mbox{and}\ (\curl\uv)\times\mathbf{n}=\undertilde{0}\ \mbox{on}\ \Gamma\}$. Define $H(\dv,\Omega)=\{\uv\in (L^2(\Omega))^3:\dv\uv\in L^2(\Omega)\}$, and $H_0(\dv,\Omega)=\{\uv\in H(\dv,\Omega):\uv\cdot\mathbf{n}=0\,\mbox{on}\,\Gamma\}$. In the sequel, we use $\nabla\times$ for $\curl$ in equations. 

\begin{lemma}\label{lem:picurl}
There exists a constant $C$, such that it holds for $\uv\in H_0(\curl,\Omega)$ and $\dv\uv=0$ that
\begin{equation}
\|\uv\|_{0,\Omega}\leqslant C\|\uv\|_{\curl,\Omega}.
\end{equation}
\end{lemma}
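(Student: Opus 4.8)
The stated bound is a (weak form of the) Friedrichs--Poincar\'e inequality for divergence-free fields in $H_0(\curl,\Omega)$, and since $\|\curl\uv\|_{0,\Omega}\leqslant\|\uv\|_{\curl,\Omega}$ it suffices to prove the sharper estimate $\|\uv\|_{0,\Omega}\leqslant C\|\curl\uv\|_{0,\Omega}$. The plan is to argue by compactness and contradiction. Set $X:=\{\uv\in H_0(\curl,\Omega):\dv\uv=0\ \text{in}\ \Omega\}$, a closed subspace of $H(\curl,\Omega)$; recall that $(\uv,\nabla\xi)=0$ for all $\xi\in H^1_0(\Omega)$ characterises the constraint $\dv\uv=0$. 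If the inequality failed, there would be $\uv_n\in X$ with $\|\uv_n\|_{0,\Omega}=1$ and $\|\curl\uv_n\|_{0,\Omega}\to0$, so that $\{\uv_n\}$ is bounded in $H(\curl,\Omega)$.

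The key ingredient I would invoke is the compact embedding $X\hookrightarrow\hookrightarrow(L^2(\Omega))^3$, which holds on any bounded Lipschitz --- in particular polyhedral --- domain (the Weck--Weber--Picard selection theorem; see also Amrouche--Bernardi--Dauge--Girault). Along a subsequence $\uv_n\to\uv$ in $(L^2(\Omega))^3$ with $\|\uv\|_{0,\Omega}=1$; since in addition $\curl\uv_n\to\undertilde{0}$, the convergence is in the $H(\curl,\Omega)$-norm, so $\uv\in H_0(\curl,\Omega)$ (the space being closed) with $\curl\uv=\undertilde{0}$, and passing to the limit in $(\uv_n,\nabla\xi)=0$ gives $\dv\uv=0$. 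It then remains to show $\uv\equiv\undertilde{0}$, which is where the topology of $\Omega$ enters: since $\Omega$ is simply connected, $\curl\uv=\undertilde{0}$ yields a scalar potential $\uv=\nabla p$ with $p\in H^1(\Omega)$; the boundary condition $\uv\times\mathbf{n}=\undertilde{0}$ forces the tangential gradient of $p$ on $\Gamma$ to vanish, so $p$ is constant on $\Gamma$ ($\Gamma$ being connected), and we may take $p\in H^1_0(\Omega)$. Finally $\Delta p=\dv\uv=0$ together with $p\in H^1_0(\Omega)$ gives $\|\nabla p\|_{0,\Omega}=0$, i.e. $\uv=\nabla p=\undertilde{0}$, contradicting $\|\uv\|_{0,\Omega}=1$.

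The only non-elementary step is the compact embedding of $X$ into $(L^2(\Omega))^3$; everything else is bookkeeping. On a convex or smooth $\Omega$ one could instead use the continuous embedding into $(H^1(\Omega))^3$ and Rellich's theorem, but on a general polyhedron that embedding fails, so the Weck-type selection theorem is the appropriate tool here. It is worth noting that both hypotheses on the domain are genuinely used --- simple connectedness to produce the potential $p$, and connectedness of $\Gamma$ to conclude $p$ is globally constant; that the latter cannot be dropped is already visible on a spherical shell, which carries the curl- and divergence-free field $\nabla\,|\ux|^{-1}$ with vanishing tangential trace but nonzero $L^2$ norm.
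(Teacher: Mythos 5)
Your proof is correct. The paper itself offers no proof of this lemma --- it is quoted as a standard Friedrichs-type inequality for divergence-free fields --- so there is no argument of the paper to compare against; what you have written is the canonical compactness proof and it fills that gap cleanly. Two points are worth recording. First, you correctly observe that the inequality as literally printed is vacuous (it holds with $C=1$, since $\|\uv\|_{\curl,\Omega}$ already contains $\|\uv\|_{0,\Omega}$), and you prove instead the sharper bound $\|\uv\|_{0,\Omega}\leqslant C\|\curl\uv\|_{0,\Omega}$, which is the form the paper actually invokes later (e.g.\ the step ``$\|\uu\|_{\curl,\Omega}\leqslant C\|\nabla\times\uu\|_{0,\Omega}$'' in the proof of Lemma \ref{lem:wellposeA'}). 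Second, your choice of the Weber--Weck--Picard compact embedding of $H_0(\curl,\Omega)\cap H(\dv,\Omega)$ into $(L^2(\Omega))^3$ is the right tool at this point of the paper: the lemma is stated for a general simply connected polyhedron with connected boundary, where the continuous embedding into $\undertilde{H}{}^1(\Omega)$ of Lemma \ref{lem:GRembd} is not available (that lemma requires convexity), so one cannot shortcut via Rellich. Your identification of the limit field, via $\{\uv\in H_0(\curl,\Omega):\curl\uv=\undertilde{0}\}=\nabla H^1_0(\Omega)$, uses exactly the two standing topological hypotheses ($\Omega$ simply connected to produce the potential, $\Gamma$ connected to normalise it into $H^1_0(\Omega)$), and the spherical-shell remark correctly locates where the second is indispensable. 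The only cosmetic quibble is the final step: rather than discussing $\Delta p$, it is cleaner to test the divergence constraint $(\uv,\nabla\xi)=0$ directly with $\xi=p\in H^1_0(\Omega)$, which gives $\|\nabla p\|_{0,\Omega}^2=0$ at once.
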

\begin{lemma}\label{lem:hongetal}[Lemma 2.1 of \cite{Hong.Q;Hu.J;Shu.S;Xu.J2012}]
$H^2_0(\curl,\Omega)$ is the closure of $(\mathcal{C}^\infty_0(\Omega))^3$ in $H(\curl^2,\Omega)$, and 
$$
\|\nabla\times\uv\|_{0,\Omega}\leqslant \frac{1}{2}(\|\nabla\times\nabla\times\uv\|_{0,\Omega}+\|\uv\|_{0,\Omega})\ \ \mbox{on}\ \,H^2_0(\curl,\Omega).
$$
\end{lemma}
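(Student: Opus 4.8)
The two assertions will be treated in sequence: first the norm inequality on the dense class $(\mathcal{C}^\infty_0(\Omega))^3$, then the identification of $H^2_0(\curl,\Omega)$ with the $H(\curl^2,\Omega)$-closure of that class, and finally the transfer of the inequality by density. On $\uv\in(\mathcal{C}^\infty_0(\Omega))^3$ I would integrate by parts twice (all boundary integrals vanish by compact support) and then apply Cauchy--Schwarz followed by $ab\leqslant\frac14(a+b)^2$:
\[
\|\curl\uv\|_{0,\Omega}^2=(\curl\uv,\curl\uv)=(\uv,\curl\curl\uv)\leqslant\|\uv\|_{0,\Omega}\,\|\curl\curl\uv\|_{0,\Omega}\leqslant\tfrac14\big(\|\uv\|_{0,\Omega}+\|\curl\curl\uv\|_{0,\Omega}\big)^2;
\]
taking square roots gives the stated bound on $(\mathcal{C}^\infty_0(\Omega))^3$.

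Write $\tilde X$ for the closure of $(\mathcal{C}^\infty_0(\Omega))^3$ in $H(\curl^2,\Omega)$. For $\tilde X\subseteq H^2_0(\curl,\Omega)$: if $\uv_n\to\uv$ in $H(\curl^2,\Omega)$ with $\uv_n\in(\mathcal{C}^\infty_0(\Omega))^3$, the inequality above makes $\{\curl\uv_n\}$ Cauchy in $(L^2(\Omega))^3$, and uniqueness of distributional limits identifies its limit as $\curl\uv$, so $\uv\in H^2(\curl,\Omega)$; continuity of the tangential trace on $H(\curl,\Omega)$, applied to $\uv_n\to\uv$ and to $\curl\uv_n\to\curl\uv$, then forces $\uv\times\mathbf{n}=\undertilde{0}$ and $(\curl\uv)\times\mathbf{n}=\undertilde{0}$, so $\uv\in H^2_0(\curl,\Omega)$, and the inequality passes to the limit. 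For the reverse inclusion I would use Hahn--Banach: it suffices that every bounded functional $\ell$ on $H(\curl^2,\Omega)$ vanishing on $(\mathcal{C}^\infty_0(\Omega))^3$ vanish also on $H^2_0(\curl,\Omega)$. Represent $\ell$ via Riesz as $\ell(\uv)=(\uv,\ualpha)+(\curl\curl\uv,\ubeta)$ with $\ualpha,\ubeta\in(L^2(\Omega))^3$; testing on $(\mathcal{C}^\infty_0(\Omega))^3$ gives $\ualpha=-\curl\curl\ubeta$ in the distributional sense, in particular $\ubeta\in H(\curl^2,\Omega)$. Then for $\uv\in H^2_0(\curl,\Omega)$ a twofold Green formula for $H(\curl^2,\Omega)$ — integrating by parts once, the boundary term carrying $(\curl\uv)\times\mathbf{n}=\undertilde{0}$, and once more, the boundary term carrying $\uv\times\mathbf{n}=\undertilde{0}$ — gives $(\curl\curl\uv,\ubeta)=(\uv,\curl\curl\ubeta)=-(\uv,\ualpha)$, whence $\ell(\uv)=0$. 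So $H^2_0(\curl,\Omega)\subseteq\tilde X$, the two spaces coincide, and the inequality is valid on all of $H^2_0(\curl,\Omega)$.

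The main obstacle is the second inclusion and, within it, the rigorous form of the twofold Green formula on a polyhedral (hence merely Lipschitz) domain: one must realise the tangential traces $\uv\times\mathbf{n}$ and $(\curl\uv)\times\mathbf{n}$ as elements of the appropriate fractional-order trace spaces on $\Gamma$ — which also underlies the trace-continuity step used for the first inclusion — and justify the two integrations by parts in the corresponding duality pairings. An alternative for the density half would be to extend $\uv\in H^2_0(\curl,\Omega)$ by zero to $\mathbb{R}^3$ (the boundary conditions are exactly what is needed for $\curl$ and $\curl\curl$ to commute with this extension), and then shrink-and-mollify via a boundary partition of unity; this trades the Green formula for the standard but still technical inward-translation argument. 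Everything else — the Cauchy--Schwarz/arithmetic--geometric estimate and the limiting arguments — is routine.
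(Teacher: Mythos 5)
The paper offers no proof of this lemma: it is quoted verbatim from Lemma 2.1 of \cite{Hong.Q;Hu.J;Shu.S;Xu.J2012}, so your argument has to stand on its own. The inequality half does: $(\nabla\times\uv,\nabla\times\uv)=(\uv,\nabla\times\nabla\times\uv)$ for $\uv\in(\mathcal{C}^\infty_0(\Omega))^3$ (one application of the curl Green identity, not two), then Cauchy--Schwarz and $ab\leqslant\frac14(a+b)^2$, and the inequality and the inclusion of the closure into $H^2_0(\curl,\Omega)$ pass to the limit exactly as you describe.

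The gap is in the Hahn--Banach half, and it is not quite where you locate it. The Riesz representer satisfies only $\ubeta\in(L^2(\Omega))^3$ and $\curl\curl\ubeta=-\ualpha\in(L^2(\Omega))^3$; membership in $H(\curl^2,\Omega)$ gives no control whatsoever on $\curl\ubeta$, which need not be a square-integrable function (the norm of $H(\curl^2,\Omega)$ deliberately omits the first curl --- that omission is the whole point of the lemma). Consequently the ``twofold Green formula'' you invoke, which integrates by parts once to reach $(\curl\uv,\curl\ubeta)$ plus a boundary term and then once more, is ill-formed: the intermediate term does not exist. The one-stroke identity $(\curl\curl\uv,\ubeta)=(\uv,\curl\curl\ubeta)$ for $\uv\in H^2_0(\curl,\Omega)$ and $\ubeta\in H(\curl^2,\Omega)$ is essentially equivalent to the density statement itself, so proving it by approximating $\uv$ with smooth compactly supported fields would be circular. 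The repair is the argument you offer only as an alternative: extend $\uv\in H^2_0(\curl,\Omega)$ by zero --- the two tangential boundary conditions are exactly what makes $\curl$ and then $\curl\curl$ commute with the zero extension --- and then mollify after a shrink/translation step localized by a partition of unity adapted to the Lipschitz boundary. That should be the primary proof of the density half; once density is established directly, the Hahn--Banach detour and its problematic Green formula are unnecessary.
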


Let $\Omega$ be subdivided to tetrahedrons which form a grid $\mathcal{G}_h$. We impose the shape regularity assumption on $\mathcal{G}_h$. On the grid can finite element spaces be constructed. We refer to \cite{GiraultRaviart1986,Monk.P2003,Ciarlet.P1978} for the context of finite element methods. We only recall these familiar finite element spaces:
\begin{itemize}
\item continuous Lagrangian element space of $k$-th degree: subspace of $H^1(\Omega)$, consist of piecewise $k$-th-degree polynomials; denoted by $\mathcal{L}^k_{h(0)}$, without or with respect to the $H^1_0(\Omega)$ boundary condition;
\item Nedelec edge element of first family of $k$-th degree: subspace of $H(\curl,\Omega)$, consist of piecewise polynomials of the form $\uu+\uv$, with $\uu\in (P_{k-1})^3$ and $\uv\in \ux\times(\hat{P}_{k-1})^3$, where $P_{k-1}$ is the space of $(k-1)$-th degree polynomials, and $\hat{P}_{k-1}^3$ is the space of homogeneous $(k-1)$-th degree polynomials; denoted by $N^k_{h(0)}$, without or with respect to the $H_0(\curl,\Omega)$ boundary condition. 
\end{itemize}
Particularly, $\mathcal{L}^0_{h}$ denotes the space of piecewise constant, and $\mathcal{L}^0_{h0}=\mathcal{L}^0_{h}\cap L^2_0(\Omega)$. The fact below is well known.
\begin{lemma} 
$\nabla \mathcal{L}^k_{h(0)}=\{\uv\in N^k_{h(0)}:\nabla\times\uv=\undertilde{0}\}$.
\end{lemma}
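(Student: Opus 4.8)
The plan is to prove the two set inclusions separately. The only nonelementary ingredient is a local fact: on a single tetrahedron $T$ the curl-free subspace of $N^k(T)$ equals $\nabla P_k(T)$, which is the local exact-sequence property of the first-family Nedelec element and can be obtained by a dimension count (or, since $T$ is star-shaped, by building the Poincar\'e potential of a curl-free field in $N^k(T)$ and checking it is a polynomial of degree $k$). The global statement will then follow by a gluing argument that uses the hypotheses that $\Omega$ is simply connected and $\Gamma$ is connected.

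For the inclusion $\nabla\mathcal{L}^k_{h(0)}\subseteq\{\uv\in N^k_{h(0)}:\nabla\times\uv=\undertilde{0}\}$ I would argue directly. Let $\phi\in\mathcal{L}^k_h$. On each $T\in\mathcal{G}_h$ one has $\nabla\phi|_T\in\nabla P_k(T)\subseteq (P_{k-1})^3\subseteq N^k(T)$ and $\nabla\times(\nabla\phi)|_T=\undertilde{0}$. Since $\phi$ is continuous, its surface gradient along any interior face is single-valued, which is exactly the statement that the tangential components of $\nabla\phi$ match across that face; hence $\nabla\phi\in N^k_h$. If moreover $\phi\in\mathcal{L}^k_{h0}$, then $\phi|_\Gamma=0$, so the surface gradient of $\phi$ on $\Gamma$ vanishes, i.e. $(\nabla\phi)\times\mathbf{n}=\undertilde{0}$ on $\Gamma$, giving $\nabla\phi\in N^k_{h0}$.

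For the reverse inclusion, take $\uv\in N^k_h$ with $\nabla\times\uv=\undertilde{0}$. By the local fact above, on each $T$ there is a polynomial $\phi_T\in P_k(T)$, unique up to an additive constant, with $\nabla\phi_T=\uv|_T$. The remaining task is to fix these constants consistently. On an interior face $F=T_1\cap T_2$ the tangential traces of $\uv$ from the two sides coincide ($H(\curl)$-conformity of $N^k_h$), hence the surface gradients of $\phi_{T_1}|_F$ and $\phi_{T_2}|_F$ coincide and $\phi_{T_1}-\phi_{T_2}$ is constant on $F$. I would then fix $\phi$ on one tetrahedron and propagate the constants along a spanning tree of the dual graph of $\mathcal{G}_h$ (which is connected because $\Omega$ is), obtaining a function $\phi$ continuous across every tree face; continuity across the remaining faces amounts to checking that the accumulated constant around every cycle of the dual graph vanishes, which holds because that accumulated constant is a well-defined locally constant quantity and $\Omega$ is simply connected. (Equivalently: the piecewise-defined $\phi$ has single-valued face values, hence lies in $H^1(\Omega)$, hence in $\mathcal{L}^k_h$.) Thus $\phi\in\mathcal{L}^k_h$ with $\nabla\phi=\uv$. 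Finally, if $\uv\in N^k_{h0}$, then $(\nabla\phi)\times\mathbf{n}=\undertilde{0}$ on $\Gamma$ forces $\phi$ to be locally constant on $\Gamma$, and connectedness of $\Gamma$ makes it a single constant there; subtracting that constant yields $\phi\in\mathcal{L}^k_{h0}$.

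The step I expect to be the main obstacle is this gluing: passing from the local scalar potentials $\phi_T$ to a single globally continuous potential is precisely where the topological assumptions on $\Omega$ and $\Gamma$ are used, and it must be set up carefully --- either via the dual-graph/cycle argument above, or by the cleaner observation that the piecewise potential automatically has single-valued traces on all faces and hence defines an $H^1$-function. The auxiliary local exact-sequence identity for $N^k(T)$ is standard but should be recorded and justified rather than assumed.
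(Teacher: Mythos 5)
The paper does not actually prove this lemma: it is stated as ``well known'' with no argument, being the exactness at the $H(\curl)$ level of the discrete de Rham complex for Lagrange/N\'ed\'elec elements (the standard references the paper cites, e.g.\ Boffi--Brezzi--Fortin and the N\'ed\'elec papers, contain it). Your proof is the standard one and is essentially correct: the inclusion $\nabla\mathcal{L}^k_{h(0)}\subseteq\ker(\curl)\cap N^k_{h(0)}$ is elementary as you say; the reverse inclusion rests on the local exactness on a single tetrahedron plus a global gluing that uses simple connectedness of $\Omega$, and the boundary case correctly uses connectedness of $\Gamma$ to reduce the locally constant trace of $\phi$ to a single constant. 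Two small remarks. First, your parenthetical shortcut (``the piecewise-defined $\phi$ has single-valued face values, hence lies in $H^1$'') is not available as stated, because before the elementwise constants are fixed the face values are not well defined; the genuinely cleaner route is to apply the continuous Poincar\'e lemma first: since $\Omega$ is simply connected and $\curl\uv=\undertilde{0}$, there is $\phi\in H^1(\Omega)$ with $\nabla\phi=\uv$, and on each $T$ this $\phi$ differs from your local potential $\phi_T$ by a constant, so $\phi$ is automatically a continuous piecewise $P_k$ function, i.e.\ an element of $\mathcal{L}^k_h$; this subsumes your dual-graph/cycle bookkeeping. Second, if you do keep the combinatorial version, the claim that the accumulated constant around every dual-graph cycle vanishes should be tied explicitly to the vanishing of the periods $\int_\gamma\uv\cdot\mathrm{d}\undertilde{x}$ of the closed form $\uv$ over (contractible) loops, which is where simple connectedness actually enters. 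The auxiliary local fact $\{\uv\in N^k(T):\curl\uv=\undertilde{0}\}=\nabla P_k(T)$ is indeed standard and a dimension count suffices, but, as you note, it should be recorded.
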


\subsection{New primal formulation} 
The variational formulations of the boundary value problem are used in literature:
\begin{itemize}
\item for {\bf (A)} (\cite{Sun.J2016}, e.g.,): given $\uf$ with $\dv\uf=0$, find $\uu\in H^2_0(\curl,\Omega)$ and $\dv\uu=0$, such that 
\begin{equation}\label{eq:vfA}
(\nabla\times\nabla\times \uu,\nabla\times\nabla\times\uv)=(\uf,\uv),\ \forall\,\uv\in H^2_0(\curl,\Omega);
\end{equation}
\item for {\bf (B)} (\cite{Hong.Q;Hu.J;Shu.S;Xu.J2012}, e.g.,): given $\uf$, to find $\uu\in H^2_0(\curl,\Omega)$, such that 
\begin{equation}\label{eq:vfB}
(\nabla\times\nabla\times \uu,\nabla\times\nabla\times\uv)+(\uu,\uv)=(\uf,\uv),\ \forall\,\uv\in H^2_0(\curl,\Omega).
\end{equation}
\end{itemize}
The well-posedness of the two variational problems is guaranteed by Lemmas \ref{lem:picurl} and \ref{lem:hongetal}.

Note that $\nabla\times\uu\in H_0(\dv,\Omega)$ for $\uu\in H_0(\curl,\Omega)$, and thus the boundary control of $\nabla\times\uu$ is more than the capacity of boundary condition of $H_0(\curl,\Omega)$. This way, we set up the variational form on another Sobolev space. We begin with the fact below.
\begin{lemma}\label{lem:h=cd}[Lemma 2.5, \cite{GiraultRaviart1986}]
If $\Omega\subset\mathbb{R}^3$ is bounded, simply connected with Liptschiz-continuous boundary, then 
$$
\undertilde{H}{}^1_0(\Omega):=(H^1_0(\Omega))^3=H_0(\curl,\Omega)\cap H_0(\dv,\Omega).
$$
and
$$
(\nabla\uu,\nabla\uv)=(\nabla\times\uu,\nabla\times\uv)+(\dv\uu,\dv\uv)\ \ \mbox{for}\ \uu,\uv\in \undertilde{H}{}^1_0(\Omega).
$$
\end{lemma}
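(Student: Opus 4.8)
The plan is to split the statement into two parts — the algebraic (Green's) identity and the equality of the two function spaces — and to dispose of the identity first, because it also produces the norm bookkeeping needed for the space equality.

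For the identity I would start on $(\mathcal{C}^\infty_0(\Omega))^3$. Expanding $|\curl\uv|^2+|\dv\uv|^2$ pointwise produces $|\nabla\uv|^2$ together with the cross terms $\sum_{i\ne j}(\partial_iv_i\,\partial_jv_j-\partial_iv_j\,\partial_jv_i)$, and integrating by parts twice in each term (no boundary contribution, by compact support) shows $\int_\Omega\sum_{i\ne j}(\partial_iv_i\,\partial_jv_j-\partial_iv_j\,\partial_jv_i)\dx=0$; hence $\|\nabla\uv\|_{0,\Omega}^2=\|\curl\uv\|_{0,\Omega}^2+\|\dv\uv\|_{0,\Omega}^2$ on $(\mathcal{C}^\infty_0(\Omega))^3$, and polarisation gives the bilinear identity there. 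Since $(\mathcal{C}^\infty_0(\Omega))^3$ is dense in $\undertilde{H}{}^1_0(\Omega)$ and the three bilinear forms are continuous on it, I would pass to the limit to obtain the identity on all of $\undertilde{H}{}^1_0(\Omega)$. I would also record the by-product that the $\undertilde{H}{}^1(\Omega)$-norm and the graph norm $\|\cdot\|_{0,\Omega}+\|\curl\cdot\|_{0,\Omega}+\|\dv\cdot\|_{0,\Omega}$ coincide on $(\mathcal{C}^\infty_0(\Omega))^3$.

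For the set equality, the inclusion $\undertilde{H}{}^1_0(\Omega)\subseteq H_0(\curl,\Omega)\cap H_0(\dv,\Omega)$ is immediate from the norm coincidence just noted, together with the facts that the right-hand side is closed in the graph norm and contains $(\mathcal{C}^\infty_0(\Omega))^3$. The content is the reverse inclusion, and here my plan is a zero-extension argument: given $\uv\in H_0(\curl,\Omega)\cap H_0(\dv,\Omega)$, extend it by zero to $\uv^e$ on $\mathbb{R}^3$ and show $\uv^e\in H(\curl,\mathbb{R}^3)\cap H(\dv,\mathbb{R}^3)$, with $\curl\uv^e$ and $\dv\uv^e$ the zero-extensions of $\curl\uv$ and $\dv\uv$. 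Testing against $\uphi\in(\mathcal{C}^\infty_0(\mathbb{R}^3))^3$, $\int_{\mathbb{R}^3}\uv^e\cdot\curl\uphi=\int_\Omega\uv\cdot\curl\uphi=\int_\Omega\curl\uv\cdot\uphi+\langle\uv\times\mathbf{n},\uphi\rangle_\Gamma$ by the Green formula on $\Omega$, and the boundary pairing drops out because $\uv\times\mathbf{n}=\undertilde{0}$; similarly the divergence Green formula and $\uv\cdot\mathbf{n}=0$ show $\dv\uv^e$ carries no surface distribution. Thus $\uv^e\in H(\curl,\mathbb{R}^3)\cap H(\dv,\mathbb{R}^3)$.

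I would then close on the whole space: the Fourier transform gives the pointwise identity $|\xi|^2|\widehat{\uv^e}(\xi)|^2=|\xi\times\widehat{\uv^e}(\xi)|^2+|\xi\cdot\widehat{\uv^e}(\xi)|^2$, so $H(\curl,\mathbb{R}^3)\cap H(\dv,\mathbb{R}^3)=(H^1(\mathbb{R}^3))^3$ with equivalent norms; hence $\uv^e\in(H^1(\mathbb{R}^3))^3$, and since $\uv^e$ vanishes off $\Omega$ this forces $\uv\in\undertilde{H}{}^1_0(\Omega)$, which proves the reverse inclusion and hence the lemma. The step I expect to be the real obstacle is the claim that the zero extension produces no surface term: it rests on the trace theory for $H(\curl,\Omega)$ and $H(\dv,\Omega)$ on a merely Lipschitz domain — that $\uv\times\mathbf{n}$ and $\uv\cdot\mathbf{n}$ are well defined in $H^{-1/2}(\Gamma)$, that the associated Green identities hold there, and that $H_0(\curl,\Omega)$ and $H_0(\dv,\Omega)$ are precisely the null spaces of these traces (equivalently, the $(\mathcal{C}^\infty_0(\Omega))^3$-closures). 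Everything else — the pointwise algebra, the density limit, and the Fourier computation on $\mathbb{R}^3$ — is routine.
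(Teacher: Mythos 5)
The paper does not prove this lemma at all: it is quoted verbatim as Lemma 2.5 of Girault--Raviart \cite{GiraultRaviart1986}, so there is no in-paper argument to compare against. Your proposal is a correct, self-contained reconstruction, and it is in fact essentially the classical proof from that reference: the Green identity on $(\mathcal{C}^\infty_0(\Omega))^3$ plus density gives the bilinear identity and the forward inclusion, and the reverse inclusion is exactly the zero-extension argument combined with the Fourier characterisation $H(\curl,\mathbb{R}^3)\cap H(\dv,\mathbb{R}^3)=(H^1(\mathbb{R}^3))^3$ via $|\xi|^2|\hat{\uv}|^2=|\xi\times\hat{\uv}|^2+|\xi\cdot\hat{\uv}|^2$, followed by the Lipschitz-domain fact that an $H^1(\mathbb{R}^3)$ function vanishing a.e.\ outside $\Omega$ restricts to an element of $H^1_0(\Omega)$. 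The one obstacle you flag --- the trace theory for $H(\curl,\Omega)$ and $H(\dv,\Omega)$ on Lipschitz domains and the identification of $H_0(\curl,\Omega)$, $H_0(\dv,\Omega)$ as the kernels of the tangential and normal trace maps (equivalently the $(\mathcal{C}^\infty_0(\Omega))^3$-closures) --- is indeed the only nonelementary input, and it is standard (Girault--Raviart, Chapter I, Theorems 2.5--2.6 and 2.11--2.12); note that since the paper defines $H_0(\dv,\Omega)$ directly by the vanishing normal trace, your Green-formula computation for the zero extension uses exactly that definition and nothing more. A minor observation: your argument nowhere uses simple connectedness of $\Omega$ or of $\Gamma$; those hypotheses are carried along in the lemma statement from the broader context of the reference but are not needed for this particular identity.
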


Now, define $\undertilde{H}{}^1_0(\curl,\Omega):=\{\uv\in H_0(\curl,\Omega):\nabla\times\uv\in \undertilde{H}{}^1_0(\Omega)\}$. The observation below is crucial. 
\begin{lemma}
$\undertilde{H}{}^1_0(\curl,\Omega)=H^2_0(\curl,\Omega)$.
\end{lemma}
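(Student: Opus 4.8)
The plan is to prove the two inclusions $\undertilde{H}{}^1_0(\curl,\Omega)\subseteq H^2_0(\curl,\Omega)$ and $H^2_0(\curl,\Omega)\subseteq\undertilde{H}{}^1_0(\curl,\Omega)$ separately, using Lemma~\ref{lem:h=cd} as the bridge between the $\curl$-type and the $H^1$-type descriptions of the second variable $\nabla\times\uv$.

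\textbf{Step 1: the inclusion $\undertilde{H}{}^1_0(\curl,\Omega)\subseteq H^2_0(\curl,\Omega)$.} Let $\uv\in\undertilde{H}{}^1_0(\curl,\Omega)$, so $\uv\in H_0(\curl,\Omega)$ and $\nabla\times\uv\in\undertilde{H}{}^1_0(\Omega)$. Since $\undertilde{H}{}^1_0(\Omega)\hookrightarrow H(\curl,\Omega)$, we get $\curl\curl\uv\in(L^2(\Omega))^3$, so $\uv\in H^2(\curl,\Omega)$. It remains to check the boundary conditions. First, $\uv\times\mathbf n=\undertilde 0$ holds because $\uv\in H_0(\curl,\Omega)$. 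Second, by Lemma~\ref{lem:h=cd}, $\undertilde{H}{}^1_0(\Omega)=H_0(\curl,\Omega)\cap H_0(\dv,\Omega)$, so in particular $\nabla\times\uv\in H_0(\curl,\Omega)$, which gives exactly $(\nabla\times\uv)\times\mathbf n=\undertilde 0$ on $\Gamma$. Hence $\uv\in H^2_0(\curl,\Omega)$.

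\textbf{Step 2: the reverse inclusion $H^2_0(\curl,\Omega)\subseteq\undertilde{H}{}^1_0(\curl,\Omega)$.} Let $\uv\in H^2_0(\curl,\Omega)$. By Lemma~\ref{lem:hongetal}, $\uv$ is the limit in $H(\curl^2,\Omega)$ of a sequence $\uv_n\in(\mathcal C^\infty_0(\Omega))^3$. Set $\uw_n:=\nabla\times\uv_n\in(\mathcal C^\infty(\Omega))^3$. The convergence $\uv_n\to\uv$ in $H(\curl^2,\Omega)$ means $\uv_n\to\uv$ in $L^2$ and $\curl\curl\uv_n\to\curl\curl\uv$ in $L^2$; combined with the inequality in Lemma~\ref{lem:hongetal} applied to $\uv_n-\uv_m$ (which lies in $H^2_0(\curl,\Omega)$), we get that $\uw_n=\nabla\times\uv_n$ is Cauchy in $H(\curl,\Omega)$, and $\dv\uw_n=0$ identically, so $\uw_n$ is also Cauchy in $H(\dv,\Omega)$; thus $\uw_n\to\nabla\times\uv$ in $H_0(\curl,\Omega)\cap H_0(\dv,\Omega)$, the closure being taken in that space. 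By Lemma~\ref{lem:h=cd} this intersection equals $\undertilde{H}{}^1_0(\Omega)$ with equivalent norm, so $\nabla\times\uv\in\undertilde{H}{}^1_0(\Omega)$. Since $\uv\in H^2_0(\curl,\Omega)\subseteq H_0(\curl,\Omega)$ trivially, we conclude $\uv\in\undertilde{H}{}^1_0(\curl,\Omega)$.

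\textbf{Main obstacle.} The delicate point is Step~2: one must be careful that $\nabla\times\uv$ genuinely lands in $H^1_0$ and not merely in $H^1$. This is where the homogeneous boundary data $(\nabla\times\uv)\times\mathbf n=\undertilde 0$ built into $H^2_0(\curl,\Omega)$, together with $\dv(\nabla\times\uv)=0$, is essential: it is precisely the pair of conditions $\nabla\times\uv\in H_0(\curl)\cap H_0(\dv)$ needed to invoke Lemma~\ref{lem:h=cd}. Using the density characterization of Lemma~\ref{lem:hongetal} rather than arguing directly with traces sidesteps the subtlety of defining $\nabla\times\uv$ on $\Gamma$ for a general element of $H(\curl^2,\Omega)$; alternatively one can verify directly that the tangential trace of $\nabla\times\uv$ vanishes and that $\dv(\nabla\times\uv)=0$ in $L^2$, which is immediate, and then cite Lemma~\ref{lem:h=cd}. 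Either way the norm equivalence in Lemma~\ref{lem:h=cd} also yields, as a bonus, that the graph norm of $\undertilde{H}{}^1_0(\curl,\Omega)$ is equivalent to $\|\cdot\|_{H(\curl^2,\Omega)}$ on $H^2_0(\curl,\Omega)$.
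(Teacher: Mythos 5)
Your proof is correct and follows the same route as the paper: both inclusions reduce, via Lemma~\ref{lem:h=cd}, to identifying $\nabla\times\uv$ as an element of $H_0(\curl,\Omega)\cap H_0(\dv,\Omega)=\undertilde{H}{}^1_0(\Omega)$. The only real difference is that in Step~2 you justify the trace conditions on $\nabla\times\uv$ through the density statement of Lemma~\ref{lem:hongetal}, whereas the paper asserts them directly (the tangential condition is built into the definition of $H^2_0(\curl,\Omega)$, and the normal condition follows from the remark preceding the lemma that $\nabla\times\uu\in H_0(\dv,\Omega)$ for $\uu\in H_0(\curl,\Omega)$); both justifications are valid, and your observation that $\uw_n=\nabla\times\uv_n$ lies in $(\mathcal{C}^\infty_0(\Omega))^3$, not merely $(\mathcal{C}^\infty(\Omega))^3$, is exactly what makes the limit land in the $H_0$ spaces.
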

\begin{proof}
Evidently, $\undertilde{H}{}^1_0(\curl,\Omega)\subset H^2_0(\curl,\Omega)$. On the other hand, given $\uv\in H^2_0(\curl,\Omega)$, $\nabla\times\uv\in H_0(\curl,\Omega)\cap H_0(\dv,\Omega)=\undertilde{H}{}^1_0(\Omega)$. This finishes the proof.
\end{proof}
Moreover, $(\nabla\nabla\times\uu,\nabla\nabla\times\uv)=(\nabla\times\nabla\times\uu,\nabla\times\nabla\times\uv)$ on $\undertilde{H}{}^1_0(\curl,\Omega)$. Therefore, we establish the  variational form of the primal model problems  as:
\begin{enumerate}[{\bf(A$'$)}]
\item Given $\uf$ with $\dv\uf=0$, find $\undertilde{u}\in \undertilde{H}{}^1_0(\curl,\Omega)$, $\dv\,\uu=0$, such that 
\begin{equation}\label{eq:nvpA}
(\nabla\nabla\times\uu,\nabla\nabla\times \uv)=(\uf,\uv),\ \ \forall\,\uv\in \undertilde{H}{}^1_0(\curl,\Omega).
\end{equation}

\item Given $\uf$, find $\undertilde{u}\in \undertilde{H}{}^1_0(\curl,\Omega)$, such that 
\begin{equation}\label{eq:nvpB}
(\nabla\nabla\times\uu,\nabla\nabla\times \uv)+(\uu,\uv)=(\uf,\uv),\ \ \forall\,\uv\in \undertilde{H}{}^1_0(\curl,\Omega).
\end{equation}
\end{enumerate}

\begin{lemma}
The variational problems {\bf(A$'$)} and {\bf(B$'$)} are well-posed. They are equivalent to \eqref{eq:vfA} and \eqref{eq:vfB}, respectively.
\end{lemma}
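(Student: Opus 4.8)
The plan is to prove the two assertions of the lemma separately, first the equivalence with the classical formulations and then the well-posedness, the first being essentially a corollary of the identifications already established.

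\textbf{Equivalence.} We have just proved $\undertilde{H}{}^1_0(\curl,\Omega)=H^2_0(\curl,\Omega)$ as sets, and the remark immediately above the statement records that $(\nabla\nabla\times\uu,\nabla\nabla\times\uv)=(\nabla\times\nabla\times\uu,\nabla\times\nabla\times\uv)$ on this space; indeed this identity is precisely the second assertion of Lemma \ref{lem:h=cd} applied to $\nabla\times\uu,\nabla\times\uv\in\undertilde{H}{}^1_0(\Omega)$, using that $\dv(\nabla\times\uu)=0$. Hence the bilinear form $(\nabla\nabla\times\cdot,\nabla\nabla\times\cdot)$ of {\bf(A$'$)} coincides with $(\nabla\times\nabla\times\cdot,\nabla\times\nabla\times\cdot)$ of \eqref{eq:vfA}, the trial/test space is the same, the divergence-free constraint is the same, and the right-hand side is unchanged; therefore {\bf(A$'$)} and \eqref{eq:vfA} are the same problem, and similarly {\bf(B$'$)} and \eqref{eq:vfB} coincide. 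So the equivalence requires no new work beyond citing the two preceding lemmas and the displayed remark.

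\textbf{Well-posedness.} Both problems fit the Lax–Milgram framework on the Hilbert space $V:=\undertilde{H}{}^1_0(\curl,\Omega)$ (for {\bf(A$'$)}, on the closed subspace $V_0$ of divergence-free fields), equipped with the norm $\|\uv\|_V^2:=\|\uv\|_{0,\Omega}^2+\|\nabla\times\uv\|_{0,\Omega}^2+\|\nabla\nabla\times\uv\|_{0,\Omega}^2$. Continuity of the bilinear forms and of the functional $\uv\mapsto(\uf,\uv)$ is immediate from Cauchy–Schwarz. For coercivity of \eqref{eq:nvpB} one argues: by Lemma \ref{lem:hongetal}, $\|\nabla\times\uv\|_{0,\Omega}\le\frac12(\|\nabla\times\nabla\times\uv\|_{0,\Omega}+\|\uv\|_{0,\Omega})$, and via the remark this controls $\|\nabla\times\uv\|_{0,\Omega}$ by $\|\nabla\nabla\times\uv\|_{0,\Omega}+\|\uv\|_{0,\Omega}$; hence $\|\uv\|_V^2\lesssim\|\uv\|_{0,\Omega}^2+\|\nabla\nabla\times\uv\|_{0,\Omega}^2$, which is exactly the form $(\nabla\nabla\times\uv,\nabla\nabla\times\uv)+(\uv,\uv)$ evaluated at $\uv$. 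For \eqref{eq:nvpA} one works on $V_0$: there the term $\|\uv\|_{0,\Omega}$ is absent from the form, so one must recover it — here $\nabla\times\uv\in\undertilde{H}{}^1_0(\Omega)$ so by Poincaré (the $H^1_0$ Poincaré inequality) $\|\nabla\times\uv\|_{0,\Omega}\lesssim\|\nabla\nabla\times\uv\|_{0,\Omega}$, and then $\uv\in H_0(\curl,\Omega)$ with $\dv\uv=0$ lets us invoke Lemma \ref{lem:picurl} to get $\|\uv\|_{0,\Omega}\lesssim\|\uv\|_{\curl,\Omega}\lesssim\|\nabla\times\uv\|_{0,\Omega}+\|\uv\|_{0,\Omega}$; combining, $\|\uv\|_V\lesssim\|\nabla\nabla\times\uv\|_{0,\Omega}$, which is coercivity. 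An application of Lax–Milgram on $V_0$, respectively $V$, finishes the argument.

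\textbf{Main obstacle.} The only delicate point is the coercivity for {\bf(A$'$)}: one must be careful to chain the two Poincaré-type estimates in the right order — first control $\nabla\times\uv$ by $\nabla\nabla\times\uv$ using that $\nabla\times\uv$ lies in $\undertilde{H}{}^1_0(\Omega)$ (hence has zero trace, so the scalar $H^1_0$ Poincaré inequality applies componentwise), and only then control $\uv$ itself by $\nabla\times\uv$ via Lemma \ref{lem:picurl}, which is where the divergence-free constraint on $\uv$ is consumed. For {\bf(B$'$)} no such constraint is available, but it is not needed, since the zeroth-order term supplies the missing $L^2$ control directly and Lemma \ref{lem:hongetal} bridges the intermediate $\nabla\times$ term; everything else is routine.
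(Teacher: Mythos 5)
The paper gives no explicit proof of this lemma (it relies on the preceding space identification and on Lemmas \ref{lem:picurl} and \ref{lem:hongetal}), and your reconstruction follows exactly that intended route: equivalence from $\undertilde{H}{}^1_0(\curl,\Omega)=H^2_0(\curl,\Omega)$ together with the identity of the bilinear forms, and well-posedness by Lax--Milgram. One step, however, is circular as written: in the coercivity argument for {\bf(A$'$)} the chain $\|\uv\|_{0,\Omega}\lesssim\|\uv\|_{\curl,\Omega}\lesssim\|\nabla\times\uv\|_{0,\Omega}+\|\uv\|_{0,\Omega}$ only yields $\|\uv\|_{0,\Omega}\lesssim\|\nabla\times\uv\|_{0,\Omega}+\|\uv\|_{0,\Omega}$, which is vacuous and does not give the claimed $\|\uv\|_V\lesssim\|\nabla\nabla\times\uv\|_{0,\Omega}$. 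What you need (and what Lemma \ref{lem:picurl} is evidently meant to assert, despite its literal statement being the trivial bound by the full $\curl$-norm) is the Friedrichs-type inequality $\|\uv\|_{0,\Omega}\leqslant C\|\nabla\times\uv\|_{0,\Omega}$ for divergence-free $\uv\in H_0(\curl,\Omega)$ on a simply connected domain; with that form of the lemma, which your ``main obstacle'' paragraph shows you intend, the argument closes correctly.
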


\begin{remark}
The variational problem on $\undertilde{H}{}^1_0(\curl,\Omega)$  has indeed been used in \cite{Zheng.B;Hu.Q;Xu.J2011}, where the original boundary condition is $\uu\times\mathbf{n}=\nabla\times\uu=\undertilde{0}$. In this paper here, we show that even when the boundary condition is simplified, the space $\undertilde{H}{}^1_0(\curl,\Omega)$ is still the appropriate one. 
\end{remark}

\section{Mixed formulation of model problems}
\label{sec:mf}

In this section, we present mixed problems that are equivalent to {\bf (A$'$)} and {\bf (B$'$)}, thus to {\bf (A)} and {\bf (B)}, respectively. The stability and regularity of the mixed problems are given. 

\subsection{Mixedization of Problem {\bf(A $'$)}}

We start with the observations below. Let $\uu$ be the solution of {\bf (A$'$)} (thus {\bf (A)}). Define $\uphi:=\nabla\times\uu$, then $\uphi\in\undertilde{H}{}^1_0(\Omega)$, and $\dv\uphi=0$. On the other hand, given $\uphi\in\undertilde{H}{}^1_0(\Omega)$, $\uphi=\curl\uu$ for $\uu\in H_0(\curl,\Omega)$ iff $(\dv\uphi,q)=0$ for any $q\in L^2_0(\Omega)$ and $(\uphi,\nabla\times\us)=(\nabla\times\uu,\nabla\times\us)$ for any $\us\in H_0(\curl,\Omega)$. Also, $\uu$ is uniquely determined by the divergence free condition, which reads equivalently $(\uu,\nabla h)=0$ for any $h\in H_0(\Omega)$. Now define 
\begin{equation}
V:=H^1_0(\Omega)\times H_0(\curl,\Omega)\times\undertilde{H}{}^1_0(\Omega)\times L^2_0(\Omega)\times H_0(\curl,\Omega)\times H^1_0(\Omega).
\end{equation} 
The mixed formulation of  {\bf (A)} is to find $(m,\uu,\uphi,p,\ur,g)\in V$, such that for $\forall\,(n,\uv,\upsi,q,\us,h)\in V$, 
\begin{equation}\label{eq:mxdvfA'}
\left\{
\begin{array}{ccccccll}
&&&&(\ur,\nabla n)&& = 0, 
\\
&&&&-(\nabla\times\ur,\nabla\times\uv)&+(\nabla g,\uv)& =(\uf,\uv),
\\
&&(\nabla\uphi,\nabla\upsi)&+(p,\dv\upsi) &+(\nabla\times\ur,\upsi)&  & =0, 
\\
&&(\dv\uphi,q)&&&&=0, 
\\
(\nabla m,\us)&-(\nabla\times\uu,\nabla\times\us)& (\uphi,\nabla\times\us) & &&&=0, 
\\
&(\uu,\nabla h)&&&&&=0.
\end{array}
\right.
\end{equation}

\begin{lemma}\label{lem:wellposeA'}
Given $\uf\in \undertilde{L}{}^2(\Omega)$, the problem \eqref{eq:mxdvfA'} admits a unique solution $(m,\uu,\uphi,p,\ur,g)\in V$. Moreover, 
\begin{equation}
\|m\|_{1,\Omega}+\|\uu\|_{\curl,\Omega}+\|\uphi\|_{1,\Omega}+\|p\|_{0,\Omega}+\|\ur\|_{\curl,\Omega}+\|g\|_{1,\Omega}\leqslant C\|\uf\|_{(H_0(\curl,\Omega))'}.
\end{equation}
\end{lemma}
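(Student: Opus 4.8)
The plan is to verify that the bilinear form defining \eqref{eq:mxdvfA'} satisfies the Babu\v{s}ka--Brezzi conditions on the product space $V$, equipped with its natural graph norm, and then to read off both existence/uniqueness and the stated bound. The system has a nested saddle-point structure: the triple $(m,\uu,g)$ plays the role of a Lagrange multiplier/auxiliary layer enforcing, respectively, the divergence constraint on $\ur$, the divergence constraint on $\uphi$ via the coupling with $\ur$, and the divergence constraint on $\uu$; the ``genuine'' unknowns carrying coercivity are $\uphi$ (through $(\nabla\uphi,\nabla\upsi)$) and, one step removed, $\uu$ and $\ur$ (through the curl--curl forms restricted to divergence-free subspaces, where Lemma~\ref{lem:picurl} supplies coercivity). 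So the first step is to set notation for the full bilinear form
$$
\mathcal{A}\big((m,\uu,\uphi,p,\ur,g),(n,\uv,\upsi,q,\us,h)\big)
$$
obtained by summing the six equations, check it is bounded on $V\times V$ (immediate, since every term is an $L^2$ pairing of quantities each controlled by the graph norm), and then establish the inf--sup condition
$$
\inf_{0\neq U\in V}\ \sup_{0\neq W\in V}\ \frac{\mathcal{A}(U,W)}{\|U\|_V\,\|W\|_V}\ \geqslant\ \beta>0 .
$$

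The way I would prove the inf--sup bound is constructively: given $U=(m,\uu,\uphi,p,\ur,g)$, build a test function $W$ for which $\mathcal{A}(U,W)\gtrsim\|U\|_V^2$. First take the ``diagonal'' choice $W_0=(0,\uu,\uphi,0,-\ur,0)$ (sign on $\ur$ chosen to make the curl--curl contribution from the second equation add to that from the fifth), which yields $\mathcal{A}(U,W_0)=|\uphi|_{1}^2+\|\nabla\times\ur\|_0^2+\|\nabla\times\uu\|_0^2$ plus cross terms involving $p$, $m$, $g$ that I will absorb; note the off-diagonal couplings $(\uphi,\nabla\times\us)$ and $(p,\dv\upsi)$ etc.\ partly cancel because the operator is skew in the appropriate pairs. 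To recover the missing $L^2$-norms of $\ur$ and $\uu$, I invoke the Poincar\'e--Friedrichs inequality of Lemma~\ref{lem:picurl}: one first shows $\dv\ur=0$ and $\dv\uu=0$ hold for the solution (they follow from the first and sixth equations, testing with $n$ and $h$), hence on these divergence-free subspaces $\|\cdot\|_0\lesssim\|\nabla\times\cdot\|_0$. To recover $\|m\|_1$, $\|g\|_1$, $\|p\|_0$ one uses the surjectivity of $\nabla:H^1_0(\Omega)\to(\text{gradients})$ and of $\dv:\undertilde H{}^1_0(\Omega)\to L^2_0(\Omega)$: choose $\us=\nabla m$ in the fifth equation (using Lemma~\ref{lem:h=cd} to see $\nabla m\in H_0(\curl,\Omega)$ with $\nabla\times\nabla m=0$), which gives $\|\nabla m\|_0^2=(\nabla\times\uu,\nabla\times\nabla m)-(\uphi,\nabla\times\nabla m)=0$... rather, choose $\uv=\nabla g$ in the second equation and $\upsi$ solving $\dv\upsi=p$ in the third to extract $\|g\|_1$ and $\|p\|_0$ respectively, then form a suitable linear combination $W=W_0+\varepsilon_1(0,\nabla g,\ldots)+\varepsilon_2(\ldots)+\cdots$ with small parameters $\varepsilon_i$ so the new good terms dominate the cross terms they create. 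The transposed inf--sup condition (non-degeneracy in the test slot) follows by the same argument because the form is, up to the explicit signs displayed in \eqref{eq:mxdvfA'}, symmetric in structure; alternatively, since $V$ is reflexive and the problem is square, surjectivity plus the a priori bound already give uniqueness.

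Finally, the right-hand side functional $\uv\mapsto(\uf,\uv)$ is bounded on $V$ by $\|\uf\|_{(H_0(\curl,\Omega))'}\,\|\uv\|_{\curl,\Omega}$, so the Banach--Ne\v{c}as--Babu\v{s}ka theorem gives a unique $U\in V$ with $\|U\|_V\leqslant\beta^{-1}\|\uf\|_{(H_0(\curl,\Omega))'}$, which is exactly the claimed estimate. I expect the main obstacle to be the inf--sup verification, and within it the bookkeeping of the cross terms: one must choose the perturbation parameters $\varepsilon_i$ in the correct dependent order (first fix the combination that controls $\uphi,\ur,\uu$, then the one for $p$, then $m$ and $g$) so that each newly introduced indefinite term is strictly dominated; getting the signs in \eqref{eq:mxdvfA'} to cooperate — in particular that the $-(\nabla\times\ur,\nabla\times\uv)$ in equation two and the curl--curl term hidden in the coupling of equations three and five reinforce rather than cancel — is the delicate point. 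Everything else (boundedness, the divergence-free reductions, the two surjectivity facts) is routine given Lemmas~\ref{lem:picurl} and \ref{lem:h=cd}.
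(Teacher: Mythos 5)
Your overall strategy (verify a Babu\v{s}ka--Brezzi-type stability estimate for the coupled system and conclude by BNB) is reasonable, and the paper does something close in spirit but packaged differently: it splits the six unknowns into a primal triple $(m,\uu,\uphi)$ and a multiplier triple $(p,\ur,g)$, defines $a((m,\uu,\uphi),(n,\uv,\upsi))=(\nabla\uphi,\nabla\upsi)$ and the big coupling form $b$ of \eqref{eq:bforA}, and then checks the two classical Brezzi conditions: coercivity of $a$ on the kernel $Z$ of $b$ (where $m=0$, $\|\uu\|_{\curl,\Omega}\lesssim\|\nabla\times\uu\|_{0,\Omega}\leqslant\|\uphi\|_{0,\Omega}\lesssim\|\nabla\uphi\|_{0,\Omega}$), and a constructive inf--sup for $b$ via the Helmholtz decomposition $\us=\us{}_1+\us{}_2$ and a divergence lifting for $q$. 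Your monolithic version could in principle be made to work, but as written it has a concrete error at its central step. With $W_0=(0,\uu,\uphi,0,-\ur,0)$ the curl--curl contributions from the second and fifth equations are $-(\nabla\times\ur,\nabla\times\uu)$ and $+(\nabla\times\uu,\nabla\times\ur)$, which cancel identically (and with the opposite sign on $\ur$ they combine into the indefinite term $-2(\nabla\times\uu,\nabla\times\ur)$); likewise $(\nabla\times\ur,\uphi)$ from the third equation cancels against $-(\uphi,\nabla\times\ur)$ from the fifth. What actually survives is
\begin{equation*}
\mathcal{A}(U,W_0)=\|\nabla\uphi\|_{0,\Omega}^2+(p,\dv\uphi)+(\nabla g,\uu)-(\nabla m,\ur),
\end{equation*}
so the diagonal choice yields \emph{no} control of $\|\nabla\times\uu\|_{0,\Omega}$ or $\|\nabla\times\ur\|_{0,\Omega}$. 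This is not a bookkeeping issue: the form is a saddle-point form and $\uu$, $\ur$ are controlled only through the off-diagonal (constraint) blocks, which is exactly why the paper routes the argument through the inf--sup condition for $b$ rather than through any coercivity.

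A second gap: to recover $\|\ur\|_{0,\Omega}$ and $\|\uu\|_{0,\Omega}$ you invoke Lemma \ref{lem:picurl} after ``first showing $\dv\ur=0$ and $\dv\uu=0$ hold for the solution.'' In a global inf--sup verification this is circular --- the inf--sup constant must be established for \emph{arbitrary} $U\in V$, not only for elements satisfying the constraint equations, or else BNB gives you neither existence nor the bound. The legitimate way to use that observation is precisely Brezzi's framework, where coercivity of $a$ is only required on the kernel $Z$ (where the constraints do hold) and is complemented by the inf--sup condition for $b$; the latter is the ingredient your proposal leaves essentially unproved (the ``suitable linear combination with small parameters $\varepsilon_i$'' is where all the real work lives, and it is exactly the construction the paper carries out explicitly: $\nabla m=\us{}_1$, $\uphi$ a divergence lifting of $q$, and $\uu=\uu{}_1+\nabla h$ with $\uu{}_1$ defined by a curl--curl problem).
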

\begin{proof}
We are going to verify conditions by Brezzi's theory. Define
\begin{equation}\label{eq:aforA}
a((m,\uu,\uphi),(n,\uv,\upsi)):=(\nabla\uphi,\nabla\upsi),
\end{equation}
and
\begin{equation}\label{eq:bforA}
b((m,\uu,\uphi),(q,\us,h)):=(\dv\uphi,q)+(\nabla m,\us)-(\nabla\times\uu,\nabla\times\us)+(\uphi,\nabla\times\us)+(\uu,\nabla h).
\end{equation}
Then $a(\cdot,\cdot)$ and $b(\cdot,\cdot)$ are continuous on $[H^1_0(\Omega)\times H_0(\curl,\Omega)\times \undertilde{H{}^1_0(\Omega)}]^2$ and $[H^1_0(\Omega)\times H_0(\curl,\Omega)\times \undertilde{H}{}^1_0(\Omega)]\times[L^2_0(\Omega)\times H_0(\curl,\Omega)\times H^1_0(\Omega)]$, respectively. Define $Z:=\{(m,\uu,\uphi)\in [H^1_0(\Omega)\times H_0(\curl,\Omega)\times \undertilde{H}{}^1_0(\Omega)]: b((m,\uu,\uphi),(q,\us,h))=0,\ \forall\,(q,\us,h)\in [L^2_0(\Omega)\times H_0(\curl,\Omega)\times H^1_0(\Omega)]\}$. Then it remains for us to verify the coercivity of $a(\cdot,\cdot)$ on $Z$ and inf-sup condition: given nonzero $(q,\us,h)\in L^2_0(\Omega)\times H_0(\curl,\Omega)\times H^1_0(\Omega)$,
\begin{equation}
\sup_{(m,\uu,\uphi)\in H^1_0(\Omega)\times H_0(\curl,\Omega)\times \undertilde{H}{}^1_0(\Omega)}\frac{b((m,\uu,\uphi),(q,\us,h))}{\|m\|_{1,\Omega},\|\uu\|_{\curl,\Omega},\|\uphi\|_{1,\Omega}}\geqslant C(\|q\|_{0,\Omega}+\|\us\|_{\curl,\Omega}+\|h\|_{1,\Omega}).
\end{equation}
Given $(m,\uu,\uphi)\in Z$, then $m=0$. Since $(\uu,\nabla h)=0$ for any $h\in H^1_0(\Omega)$, we have $\|\uu\|_{\curl,\Omega}\leqslant C\|\nabla\times\uu\|_{0,\Omega}$. Since $(\nabla\times\uu,\nabla\times\uu)=(\uphi,\nabla\times\uu)$, we have $\|\nabla\times\uu\|_{0,\Omega}\leqslant \|\uphi\|_{0,\Omega}\leqslant C\|\nabla\uphi\|_{0,\Omega}$. This confirms the coercivity of $a(\cdot,\cdot)$ on $Z$.

Given $(q,\us,h)\in L^2_0(\Omega)\times H_0(\curl,\Omega)\times H^1_0(\Omega)$, firstly, we decomose $\us=\us{}_1+\us{}_2$, such that $\us{}_1\in\nabla H^1_0(\Omega)$, and $\us{}_2\in (\nabla H^1_0(\Omega))^\perp$. Set $\uphi$ to be such that $(\dv\uphi,q)=(q,q)$ and $\|\uphi\|_{1,\Omega}\leqslant C\|\dv\uphi\|_{0,\Omega}$, $m$ to be such that $\nabla m=\us{}_1$. Further, $\uu$ is chosen to be $\uu{}_1+\nabla h$, such that $(\uu{}_1,\nabla g)=0$ for any $g\in H^1_0(\Omega)$ and $(\uphi-\nabla\uu{}_1,\nabla\times\uv)=(\nabla\times\us,\nabla\times\uv)$ for any $\uv\in H_0(\curl,\Omega)$. Then 
$$
b((m,\uu,\uphi),(q,\us,h))=(q,q)+(\us{}_1,\us{}_1)+(\curl\us{}_2,\curl\us{}_2)+(\nabla h,\nabla h)\geqslant C(\|q\|_0^2+\|\us\|_{\curl,\Omega}^2+\|\nabla h\|_{0,\Omega}^2).
$$
Meanwhile, $\|\nabla m\|_{0,\Omega}=\|\us{}_1\|_{0,\Omega}\leqslant \|\us\|_{\curl,\Omega}$, $\|\uphi\|_{1,\Omega}\leqslant C\|q\|_{0,\Omega}$, and $\|\uu\|_{\curl,\Omega}\leqslant C(\|\nabla h\|_{0,\Omega}+\|\us\|_{\curl,\Omega})$. This constructs the inf-sup condition and finishes the proof.
\end{proof}

\begin{lemma}\label{lem:equiv}
The problem \eqref{eq:mxdvfA'} is equivalent to the variational problem {\bf (A$'$)}.
\end{lemma}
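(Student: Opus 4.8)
The plan is to show the two problems have the same solution by going in both directions, extracting the $\uu$-component from a solution of the mixed system and, conversely, building a full mixed solution out of a solution of {\bf(A$'$)}. First I would take $(m,\uu,\uphi,p,\ur,g)\in V$ solving \eqref{eq:mxdvfA'} and read off the equations one by one. The last equation $(\uu,\nabla h)=0$ for all $h\in H^1_0(\Omega)$ gives $\dv\uu=0$. The fifth equation, tested against $\us\in\nabla H^1_0(\Omega)$, forces $\nabla m=0$, hence $m=0$; tested against general $\us\in H_0(\curl,\Omega)$ it then reads $(\uphi,\nabla\times\us)=(\nabla\times\uu,\nabla\times\us)$, which together with $\uphi\in\undertilde H{}^1_0(\Omega)$ and $\dv\uphi=0$ (the fourth equation) identifies $\uphi=\nabla\times\uu$ by the characterisation recalled just before the definition of $V$; in particular $\uu\in\undertilde H{}^1_0(\curl,\Omega)$. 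Symmetrically, the first equation says $\ur$ is divergence free in the weak sense $(\ur,\nabla n)=0$, and the third equation, tested first against $\upsi\in\nabla H^1_0(\Omega)$ to kill $p$ (using $\dv\upsi$ and $\nabla\uphi:\nabla\upsi$ appropriately) and then against general $\upsi$, identifies $\nabla\times\ur$ with $-\nabla\nabla\times\uphi=-\nabla\nabla\times\nabla\times\uu$ in a weak sense; more precisely the third equation states $(\nabla\uphi,\nabla\upsi)+(\nabla\times\ur,\upsi)=0$ for all $\upsi\in\undertilde H{}^1_0(\Omega)$ with the side constraint from $p$, so $\ur$ plays the role of the ``flux'' variable.

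The key step is then the second equation $-(\nabla\times\ur,\nabla\times\uv)+(\nabla g,\uv)=(\uf,\uv)$ for all $\uv\in H_0(\curl,\Omega)$. Restricting to $\uv\in\nabla H^1_0(\Omega)$ and using $\dv\uf=0$ shows $(\nabla g,\nabla h)=0$, hence $g=0$. So the second equation becomes $-(\nabla\times\ur,\nabla\times\uv)=(\uf,\uv)$ for all $\uv\in H_0(\curl,\Omega)$, and a fortiori for all $\uv\in\undertilde H{}^1_0(\curl,\Omega)$. Combining with the identification of $\ur$ from the third equation: for a test function $\uv\in\undertilde H{}^1_0(\curl,\Omega)$ we have $\nabla\times\uv\in\undertilde H{}^1_0(\Omega)$, so we may use it as the test function $\upsi$ in the third equation to get $(\nabla\uphi,\nabla\nabla\times\uv)+(\nabla\times\ur,\nabla\times\uv)=0$ (the $p$ term drops since $\dv\nabla\times\uv=0$). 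Substituting $-(\nabla\times\ur,\nabla\times\uv)=(\uf,\uv)$ and $\uphi=\nabla\times\uu$ yields exactly $(\nabla\nabla\times\uu,\nabla\nabla\times\uv)=(\uf,\uv)$, i.e.\ $\uu$ solves \eqref{eq:nvpA}. Together with $\dv\uu=0$ and $\uu\in\undertilde H{}^1_0(\curl,\Omega)$, this shows $\uu$ is the solution of {\bf(A$'$)}.

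For the converse, given the solution $\uu$ of {\bf(A$'$)} I would set $\uphi:=\nabla\times\uu\in\undertilde H{}^1_0(\Omega)$ with $\dv\uphi=0$, set $m:=0$, $g:=0$, take $p:=0$, and define $\ur\in H_0(\curl,\Omega)$ to be the (weak) object with $\dv\ur=0$ in the sense $(\ur,\nabla n)=0$ and $-(\nabla\times\ur,\nabla\times\uv)=(\uf,\uv)$ for all $\uv\in H_0(\curl,\Omega)$ --- existence and uniqueness of such $\ur$ follows from Lemma \ref{lem:picurl} exactly as in the well-posedness of \eqref{eq:vfA} --- and then verify the third equation holds by testing against $\upsi\in\undertilde H{}^1_0(\Omega)$: decompose $\upsi=\nabla\times\uw+\nabla\zeta$ (Helmholtz-type, using that $\dv\upsi$ is controlled) and check the two equations $(\nabla\uphi,\nabla\nabla\times\uw)+(\nabla\times\ur,\nabla\times\uw)=0$ (which is \eqref{eq:nvpA} since $\uw$ can be adjusted to lie in $\undertilde H{}^1_0(\curl,\Omega)$) and the gradient part absorbed by $p=0$ and $\dv\uphi=0$. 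Then $(m,\uu,\uphi,p,\ur,g)$ satisfies all six equations of \eqref{eq:mxdvfA'}, and by the uniqueness in Lemma \ref{lem:wellposeA'} this is \emph{the} mixed solution, so the $\uu$-components coincide. The main obstacle I anticipate is the bookkeeping in the third equation and the clean handling of the $p$ and $g$ multipliers: one must be careful that the Helmholtz-type splittings are compatible across the coupled equations and that the side constraints ($\dv\uphi=0$, $\dv\ur=0$, $\dv\uu=0$) are exactly what make the cross terms $(p,\dv\upsi)$, $(\uphi,\nabla\times\us)$, $(\nabla m,\us)$ vanish on the relevant test functions; once the roles $m=g=p=0$, $\uphi=\nabla\times\uu$, $\ur=$ flux are pinned down, the equivalence is essentially a substitution.
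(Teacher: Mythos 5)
Your first direction (mixed solution $\Rightarrow$ primal solution) is correct and is essentially the paper's argument with the omitted details supplied: you extract $m=0$, $g=0$, $\dv\uu=0$ and $\uphi=\nabla\times\uu$ from the constraint equations, and then the substitution $\upsi=\nabla\times\uv$ for $\uv\in\undertilde{H}{}^1_0(\curl,\Omega)$ in the third equation, combined with the second equation, gives \eqref{eq:nvpA}. This is precisely the step the paper compresses into ``it can be proved that $(\nabla\uphi,\nabla\upsi)=(\uf,\uv)$ for $\upsi=\curl\uv$''.

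The converse direction as you wrote it has a genuine gap, concentrated in the choice $p:=0$. With $p=0$ the third equation becomes $(\nabla\uphi,\nabla\upsi)+(\nabla\times\ur,\upsi)=0$ for \emph{all} $\upsi\in\undertilde{H}{}^1_0(\Omega)$, i.e.\ $\nabla\times\nabla\times(\nabla\times\uu)=-\nabla\times\ur$ in $\undertilde{H}{}^{-1}(\Omega)$; but what you actually know from \eqref{eq:nvpA} and your definition of $\ur$ is this identity only against test functions of the form $\upsi=\nabla\times\uv$ with $\uv\in\undertilde{H}{}^1_0(\curl,\Omega)$. The two fields $(\nabla\times)^3\uu$ and $-\nabla\times\ur$ have the same curl and the same (vanishing) divergence, but only the latter is known to have vanishing normal trace, so they differ in general by the gradient of a nontrivial harmonic function --- and that gradient is exactly what the multiplier $p$ must absorb. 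This is consistent with Theorem \ref{thm:regA}, which asserts $m=0$ and $g=0$ but only $p\in H^1(\Omega)\cap L^2_0(\Omega)$. Your proposed verification via a splitting $\upsi=\nabla\times\uw+\nabla\zeta$ does not close this, since the two pieces need not individually lie in $\undertilde{H}{}^1_0(\Omega)$ and $\nabla\times\uw$ is an admissible argument for \eqref{eq:nvpA} only when $\uw\in\undertilde{H}{}^1_0(\curl,\Omega)$. Two clean repairs: (i) define $(\uphi,p)$ as the solution of the Stokes subproblem \eqref{eq:subA-2} with datum $\nabla\times\ur$, accept whatever pressure that produces, and identify $\uphi=\nabla\times\uu$ afterwards by testing against curls; or (ii) drop the explicit construction entirely --- since both problems are uniquely solvable (Lemma \ref{lem:wellposeA'} and the well-posedness of {\bf(A$'$)}), your first direction already forces the $\uu$-component of the unique mixed solution to be the unique primal solution, which is the claimed equivalence. (For what it is worth, the paper's own displayed tuple $(0,\uu,\nabla\times\uu,0,\nabla\times\uu,0)$ also takes $p=0$, and moreover $\ur=\nabla\times\uu$, which does not satisfy the second or third equations as written; this half of the argument deserves care in any case.)
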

\begin{proof}
Let $\uu\in \undertilde{H}{}^1_0(\curl,\Omega)$ such that $(\nabla\nabla\times\uu,\nabla\nabla\times\uv)=(\uf,\uv)$ for any $\uv\in \undertilde{H}{}^1_0(\curl,\Omega)$, then $(0,\uu,\nabla\times\uu,0,\nabla\times\uu,0)$ solves the system \eqref{eq:mxdvfA'} with the same $\uf$. On the other hand, let $(m,\uu,\uphi,p,\ur,g)$ be the solution of \eqref{eq:mxdvfA'}. Then $\dv\uu=0$, $\nabla\times\uu=\uphi$, and it can be proved that $(\nabla\uphi,\nabla\upsi)=(\uf,\uv)$ for any $\upsi\in\undertilde{H}{}^1_0(\Omega)$ and $\upsi=\curl\uv$. The proof is completed.
\end{proof}

\begin{lemma}\label{lem:decomp}
The problem \eqref{eq:mxdvfA'} can be decomposed to the three subsystems and solved sequentially:
\begin{enumerate}
\item given $\uf$, solve for $\ur\in H_0(\curl,\Omega)$ and $g\in H^1_0(\Omega)$ that
\begin{equation}\label{eq:subA-1}
\left\{
\begin{array}{ccll}
(\nabla\times\ur,\nabla\times\uv) & -(\nabla g,\uv) &=-(\uf,\uv) &\uv\in H_0(\curl,\Omega)
\\
(\nabla n,\ur)& &=0&\forall\,n\in H^1_0(\Omega);
\end{array}
\right.
\end{equation}

\item with $\ur$ obtained, solve for $\uphi\in \undertilde{H}{}^1_0(\Omega)$ and $p\in L^2_0(\Omega)$ that 
\begin{equation}\label{eq:subA-2}
\left\{
\begin{array}{ccll}
(\nabla\uphi,\nabla\upsi) & +(p,\dv\upsi) &=-(\nabla\times\ur,\upsi)&\forall\,\upsi\in \undertilde{H}{}^1_0(\Omega)
\\
(\dv\uphi,q)&&=0&\forall\,q\in L^2_0(\Omega);
\end{array}
\right.
\end{equation}

\item with $\uphi$ obtained, solve for $\uu\in H_0(\curl,\Omega)$ and $m\in H^1_0(\Omega)$ that
\begin{equation}\label{eq:subA-3}
\left\{
\begin{array}{ccll}
(\nabla\times\uu,\nabla\times\us)&-(\nabla m,\us) &=(\uphi,\nabla\times\us)&\forall\,\us\in H_0(\curl,\Omega)
\\
(\nabla h,\uu)&&=0&\forall\,h\in H^1_0(\Omega).
\end{array}
\right.
\end{equation}
\end{enumerate}
\end{lemma}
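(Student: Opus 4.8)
The plan is to read the decomposition directly off the block-triangular structure of \eqref{eq:mxdvfA'} and then to check that each diagonal block is a well-posed mixed problem. First I would group the six equations of \eqref{eq:mxdvfA'} into the three pairs (rows one and two), (rows three and four), (rows five and six), and the six unknowns into the blocks $(\ur,g)$, $(\uphi,p)$, $(\uu,m)$. Inspecting the terms shows that the first pair involves only $(\ur,g)$; the second pair involves $(\uphi,p)$ together with the already-determined field $\ur$, which enters solely through $(\nabla\times\ur,\upsi)$; and the third pair involves $(\uu,m)$ together with the already-determined field $\uphi$, which enters solely through $(\uphi,\nabla\times\us)$. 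Thus, with respect to this ordering, the coefficient operator of \eqref{eq:mxdvfA'} is \emph{block lower triangular}, and after an overall sign change in the first and third pairs these three pairs are precisely \eqref{eq:subA-1}, \eqref{eq:subA-2} and \eqref{eq:subA-3}. A forward-substitution argument then reduces the whole claim to the unique solvability of these three diagonal blocks.

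Second, I would verify that each subsystem is well-posed by Brezzi's theory, each being a classical mixed problem. For \eqref{eq:subA-1} (and identically for \eqref{eq:subA-3}), continuity of all the forms is immediate; the kernel of the constraint form $(\nabla n,\ur)$ is the set of divergence-free fields of $H_0(\curl,\Omega)$, on which $(\nabla\times\cdot,\nabla\times\cdot)$ is coercive by Lemma \ref{lem:picurl} used in the form $\|\ur\|_{\curl,\Omega}\le C\|\nabla\times\ur\|_{0,\Omega}$; and the inf-sup condition for $(\nabla n,\ur)$ follows by taking $\ur=\nabla n$, since $\nabla\times\nabla n=\undertilde{0}$ forces $\|\nabla n\|_{\curl,\Omega}=\|\nabla n\|_{0,\Omega}$ while the form equals $\|\nabla n\|_{0,\Omega}^2$. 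For \eqref{eq:subA-2}, coercivity of $(\nabla\cdot,\nabla\cdot)$ on $\undertilde{H}{}^1_0(\Omega)$ is the Poincar\'e inequality, and the inf-sup condition for $(\dv\upsi,q)$ on $\undertilde{H}{}^1_0(\Omega)\times L^2_0(\Omega)$ is the classical Stokes inf-sup inequality. Hence each subsystem admits a unique solution depending continuously on its right-hand side, so the sequential procedure in the statement is well-defined.

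Third, I would close the equivalence with \eqref{eq:mxdvfA'}. On the one hand, assembling the unique solutions $(\ur,g)$, then $(\uphi,p)$, then $(\uu,m)$ of the three solves yields a six-tuple satisfying all six equations of \eqref{eq:mxdvfA'} by construction. On the other hand, if $(m,\uu,\uphi,p,\ur,g)$ solves \eqref{eq:mxdvfA'}, then its components $(\ur,g)$ satisfy \eqref{eq:subA-1} and so coincide with the first solve by uniqueness; feeding this $\ur$ into the third and fourth equations shows $(\uphi,p)$ solves \eqref{eq:subA-2} and so coincides with the second solve; feeding this $\uphi$ into the fifth and sixth equations shows $(\uu,m)$ solves \eqref{eq:subA-3}. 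This gives the claimed sequential decomposition, and incidentally re-derives the uniqueness part of Lemma \ref{lem:wellposeA'}.

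I expect the only genuinely delicate point to be the kernel coercivity for \eqref{eq:subA-1} and \eqref{eq:subA-3}, where one must identify the constraint $(\nabla n,\ur)=0$ with membership in the divergence-free subspace on which Lemma \ref{lem:picurl} furnishes the Friedrichs-type bound; granting that, the remainder is just bookkeeping of the triangular structure together with the textbook Stokes inf-sup inequality.
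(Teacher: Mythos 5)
Your proposal is correct and follows essentially the same route as the paper, whose proof consists of the single observation that the system is block triangular so one only needs well-posedness of the three subproblems, ``which can be verified by the stable Helmholtz decomposition of $H_0(\curl,\Omega)$.'' Your verification of the Brezzi conditions for \eqref{eq:subA-1} and \eqref{eq:subA-3} (kernel $=$ divergence-free fields with the Friedrichs bound from Lemma \ref{lem:picurl}, inf-sup via $\ur=\nabla n$) is precisely the content of that Helmholtz-decomposition remark, spelled out, and the Stokes inf-sup handles \eqref{eq:subA-2} as in the paper.
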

\begin{proof}
We only have to show that the three subproblems are all well-posed, which can be verified by the stable Helmholtz decomposition of $H_0(\curl,\Omega)$. The proof is finished. 
\end{proof}

The theorem below constructs the regularity of the mixed system. 
\begin{theorem}\label{thm:regA}
Let $\Omega$ be a convex polyhedron, and $\uf\in (L^2(\Omega))^3$ such that $\dv\uf=0$. Let $(m,\uu,\uphi,p,\ur,g)$ be the solution of \eqref{eq:mxdvfA'}. Then
\begin{eqnarray}
&m=0, 
\\ 
&\uu\in \undertilde{H}{}^2(\Omega)\cap H_0(\curl,\Omega),\ \curl\uu\in \undertilde{H}{}^2(\Omega),
\\
&\uphi\in \undertilde{H}{}^2(\Omega)\cap \undertilde{H}{}^1_0(\Omega),\label{eq:regAphi}
\\
&p\in H^1(\Omega)\cap L^2_0(\Omega)\label{eq:regAp}
\\
&\ur\in \undertilde{H}{}^2(\Omega)\cap H_0(\curl,\Omega),\label{eq:regAr}
\\
&g=0.\label{eq:regAg}
\end{eqnarray}
\end{theorem}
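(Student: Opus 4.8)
The plan is to exploit the decomposition in Lemma~\ref{lem:decomp}: the mixed system splits into three classical saddle-point subproblems, and the regularity of each component follows from the regularity theory for those standard problems on a convex polyhedron, fed forward through the chain. I would treat the three subsystems \eqref{eq:subA-1}, \eqref{eq:subA-2}, \eqref{eq:subA-3} in order.

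\emph{Step 1: the $(\ur,g)$-system \eqref{eq:subA-1}.} The second equation forces $\ur$ to be orthogonal to $\nabla H^1_0(\Omega)$, i.e.\ $\dv\ur=0$ in the distributional sense with $\ur\cdot\mathbf n=0$ (weakly); testing the first equation with $\uv=\nabla g$ and using $\dv\uf=0$ gives $(\nabla g,\nabla g)=0$, hence $g=0$, which is \eqref{eq:regAg}. The first equation then reads $(\curl\ur,\curl\uv)=-(\uf,\uv)$ for all $\uv\in H_0(\curl,\Omega)$, so $\ur$ solves the curl-curl problem $\curl\curl\ur=-\uf$ with $\dv\ur=0$, $\ur\times\mathbf n=0$. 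On a convex polyhedron this problem has the well-known $\undertilde H^2(\Omega)$-regularity (the solution of $\curl\curl\ur = \uf_0$ with divergence-free data and the tangential boundary condition lies in $\undertilde H^1$, and in fact in $\undertilde H^2$ on convex polyhedra, via the embedding $H_0(\curl)\cap H(\dv)\hookrightarrow \undertilde H^1$ of Lemma~\ref{lem:h=cd} applied to $\curl\ur$, plus elliptic regularity for the vector Laplacian with these boundary data). This yields \eqref{eq:regAr}.

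\emph{Step 2: the $(\uphi,p)$-system \eqref{eq:subA-2}.} With $\ur\in\undertilde H^2(\Omega)$ from Step~1, the right-hand side $-\curl\ur$ lies in $\undertilde H^1(\Omega)$. The system \eqref{eq:subA-2} is exactly the (vector) Stokes problem: find $\uphi\in\undertilde H^1_0(\Omega)$, $p\in L^2_0(\Omega)$ with $-\Delta\uphi+\nabla p = -\curl\ur$, $\dv\uphi=0$. By the classical $H^2\times H^1$ regularity for the Stokes system on a convex polyhedron (Dauge, Kellogg--Osborn, etc.), $\uphi\in\undertilde H^2(\Omega)$ and $p\in H^1(\Omega)$, giving \eqref{eq:regAphi} and \eqref{eq:regAp}. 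One should remark that the ``$\nabla\uphi$'' bilinear form here coincides with the curl-curl plus div-div form on $\undertilde H^1_0(\Omega)$ by Lemma~\ref{lem:h=cd}, so the identification with Stokes is legitimate; alternatively one uses that $\curl\uphi$ then solves a curl-curl problem with $\undertilde H^1$ data.

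\emph{Step 3: the $(\uu,m)$-system \eqref{eq:subA-3}.} The second equation gives $(\uu,\nabla h)=0$, so testing the first with $\us=\nabla m$ yields $m=0$. Then $(\curl\uu,\curl\us)=(\uphi,\curl\us)=(\curl\uphi,\us)+$ (a boundary term that vanishes since $\uphi\in\undertilde H^1_0$), so $\uu$ solves $\curl\curl\uu=\curl\uphi$ with $\dv\uu=0$, $\uu\times\mathbf n=0$; moreover $\curl\uu=\uphi$ by Lemma~\ref{lem:equiv}, and $\uphi\in\undertilde H^2(\Omega)$ from Step~2. Again the curl-curl regularity on a convex polyhedron gives $\uu\in\undertilde H^2(\Omega)$, and $\curl\uu=\uphi\in\undertilde H^2(\Omega)$. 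This completes all the assertions.

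The main obstacle is Step~1 (and the analogous curl-curl regularity invoked in Step~3): proving $\undertilde H^2$-regularity for $\curl\curl\ur=\uf$ with $\dv\ur=0$ and $\ur\times\mathbf n=0$ on a convex polyhedron is not a single off-the-shelf statement but must be assembled — write $\ur = \nabla\times\ubp + \nabla\theta$ or, more directly, use that $\uw:=\curl\ur\in H_0(\curl,\Omega)\cap H(\dv,\Omega)$ with $\dv\uw=0$, hence $\uw\in\undertilde H^1_0(\Omega)$ and $-\Delta\uw=\curl\uf\in\undertilde H^{-1}$, giving $\uw\in\undertilde H^1$; to push $\ur$ itself to $\undertilde H^2$ one writes $\ur$ via a divergence-free vector potential and applies convex-polyhedron elliptic regularity to the scalar/vector Laplacian with the mixed (tangential/normal) boundary conditions, for which one must cite the appropriate results of Dauge/Amrouche--Bernardi--Dauge--Girault. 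The Stokes regularity in Step~2 is comparatively standard. I would state the needed curl-curl and Stokes regularity results as cited lemmas and keep the proof of the theorem to the three-line bookkeeping of feeding one into the next.
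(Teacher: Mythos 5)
Your proposal follows essentially the same route as the paper: decompose via Lemma \ref{lem:decomp}, obtain $g=0$ and $m=0$ by testing with gradients, deduce $\curl\ur\in\undertilde{H}{}^1(\Omega)$ from the convex-polyhedron embedding (which is Lemma \ref{lem:GRembd}, not Lemma \ref{lem:h=cd} as you cite), apply Stokes regularity on convex polyhedra for $(\uphi,p)$, and feed each output into the next subsystem. The ``main obstacle'' you flag --- upgrading $\ur$ and $\uu$ to $\undertilde{H}{}^2(\Omega)$ from divergence-free plus $\curl$ in $\undertilde{H}{}^1(\Omega)$ with the tangential boundary condition --- is exactly the off-the-shelf statement already quoted as Lemma \ref{lem:CWZZlem3.2}, which the paper invokes directly rather than assembling it from vector potentials and separate elliptic-regularity citations.
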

We postpone the proof of Theorem \ref{thm:regA} after some technical lemmas.
\begin{lemma}(Sections 3.4 and 3.5 of \cite{GiraultRaviart1986})\label{lem:GRembd}
Let $\Omega$ be a convex polyhedron. Then
\begin{equation}
H_0(\dv,\Omega)\cap H(\curl,\Omega)\subset\undertilde{H}{}^1(\Omega),\ \ H_0(\curl,\Omega)\cap H(\dv,\Omega)\subset\undertilde{H}{}^1(\Omega).
\end{equation}
\end{lemma}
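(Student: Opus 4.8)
The statement is the classical embedding, quoted here from Girault--Raviart, and I would establish it by a direct integration-by-parts argument. The two inclusions are mirror images of one another — they interchange the roles of $\curl$ and $\dv$ and the roles of the tangential and normal traces — so the plan is to prove one of them in detail, say $H_0(\dv,\Omega)\cap H(\curl,\Omega)\subset\undertilde{H}{}^1(\Omega)$, and to obtain the other by running the same argument with the complementary boundary condition.

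The approach rests on the pointwise Green identity which, after integration over $\Omega$ and the divergence theorem, reads for smooth $\uv$
$$
\int_\Omega|\nabla\uv|^2 = \int_\Omega|\curl\uv|^2 + \int_\Omega(\dv\uv)^2 + \mathcal{B}(\uv),
$$
where $\mathcal{B}(\uv)$ is a surface integral over $\partial\Omega$ assembled from first tangential derivatives of the trace of $\uv$ and from the second fundamental form of the boundary. First I would record this identity on a smooth reference domain, then localize it face by face on the polyhedron: on each flat face the curvature contribution disappears, and after inserting the boundary condition ($\uv\cdot\mathbf{n}=0$ in this case) the surviving face term collapses to a tangential-derivative expression that is already controlled by $\|\uv\|_{H(\curl)}$ and $\|\uv\|_{H(\dv)}$.

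The hard part will be the behaviour near the edges and vertices, where the flat-face computation breaks down and the genuine non-smoothness of $\partial\Omega$ intervenes; this is precisely where convexity is indispensable. For a convex polyhedron every dihedral angle is at most $\pi$, so the edge and corner contributions to $\mathcal{B}(\uv)$ carry a favorable sign and may be discarded in the upper bound — equivalently, no re-entrant edge is present to generate a singular, non-$\undertilde{H}{}^1$ component. To make this rigorous I would either invoke the angle-dependent regularity and trace estimates for the scalar Laplacian on convex polyhedral cones, or argue directly that the candidate singular functions of the vector operator at a convex edge all lie in $\undertilde{H}{}^1$. The outcome is the clean inequality $\int_\Omega|\nabla\uv|^2\leqslant\int_\Omega|\curl\uv|^2+\int_\Omega(\dv\uv)^2$, which, combined with $\uv\in(L^2(\Omega))^3$, already delivers a uniform control of the full $\undertilde{H}{}^1$ norm.

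Finally I would close by density. Smooth vector fields satisfying $\uv\cdot\mathbf{n}=0$ are dense in $H_0(\dv,\Omega)\cap H(\curl,\Omega)$ for its natural norm, so an arbitrary element is the $H(\curl)\cap H(\dv)$-limit of smooth fields whose $\undertilde{H}{}^1$-seminorms are uniformly bounded by the inequality above; the weak-$\undertilde{H}{}^1$ limit then identifies the element as a genuine $\undertilde{H}{}^1(\Omega)$ function together with the quantitative estimate. The second inclusion follows identically, replacing the normal condition $\uv\cdot\mathbf{n}=0$ by the tangential condition $\uv\times\mathbf{n}=\undertilde{0}$ throughout the boundary term.
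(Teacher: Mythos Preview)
The paper does not supply its own proof of this lemma: it is stated with an explicit citation to Sections~3.4 and~3.5 of Girault--Raviart and is then used as a black box in the proof of Theorem~\ref{thm:regA}. So there is nothing in the paper to compare your argument against; the authors simply import the result.

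Your outline follows the classical route and is in the right spirit, but a couple of steps are more delicate than your sketch suggests. First, the claim that on each flat face the surviving boundary term ``is already controlled by $\|\uv\|_{H(\curl)}$ and $\|\uv\|_{H(\dv)}$'' is not straightforward: traces of tangential derivatives are not bounded by those graph norms in general. What one actually does is integrate by parts \emph{along} each face, pushing the contribution to the edges; only then does convexity (dihedral angles $\leqslant\pi$) give the sign that lets you drop the term. Second, the density step --- smooth fields with the relevant boundary condition being dense in $H_0(\dv,\Omega)\cap H(\curl,\Omega)$ on a nonsmooth domain --- is itself a nontrivial fact on polyhedra and deserves a reference rather than an assertion. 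None of this invalidates your plan, but if you were to write it out in full you would need to address both points explicitly; the cited sections of Girault--Raviart (and, for sharper polyhedral versions, work of Costabel and of Amrouche--Bernardi--Dauge--Girault) handle precisely these issues.
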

The lemma below is a smoothened analogue of Lemma \ref{lem:GRembd}, and we adopt the version in \cite{Chen.L;Wu.Y;Zhong.L;Zhou.J2016}. From this point onwards, $\lesssim$, $\gtrsim$, and $\cequiv$ respectively denote $\leqslant$, $\geqslant$, and $=$ up to a constant. The hidden constants depend on the domain, and, when triangulation is involved, they also depend on the shape-regularity of the triangulation, but they do not depend on $h$ or any other mesh parameter.
\begin{lemma}\label{lem:CWZZlem3.2}(Lemma 3.2 of \cite{Chen.L;Wu.Y;Zhong.L;Zhou.J2016})
For functions $\upsi\in H(\dv,\Omega)\cap H_0(\curl,\Omega)$ or $H_0(\dv,\Omega)\cap H(\curl,\Omega)$ satisfying $\curl\upsi\in \undertilde{H}{}^1(\Omega)$ and $\dv\upsi\in \undertilde{H}{}^1(\Omega)$. Then $\upsi\in \undertilde{H}{}^2(\Omega)$ and $\|\upsi\|_2\lesssim \|\curl\upsi\|_{1,\Omega}+\|\dv\upsi\|_{1,\Omega}$. 
\end{lemma}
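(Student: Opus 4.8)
The plan is to upgrade the first-order regularity of Lemma \ref{lem:GRembd} to second order by a tangential-differencing argument, using the vector Laplacian identity to supply the one second derivative that differencing cannot reach. Throughout I write $f:=\dv\upsi\in H^1(\Omega)$ and keep $\curl\upsi\in\undertilde{H}{}^1(\Omega)$ from the hypothesis. I would treat the two membership cases together, since they differ only in whether the homogeneous tangential trace $\upsi\times\mathbf{n}$ or the homogeneous normal trace $\upsi\cdot\mathbf{n}$ is available on $\Gamma$, and the argument below uses only that one of these homogeneous conditions holds on each face.

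First I would record the elementary identity $-\Delta\upsi=\curl\curl\upsi-\nabla\dv\upsi=\curl(\curl\upsi)-\nabla f$. Because $\curl\upsi\in\undertilde{H}{}^1(\Omega)$ and $f\in H^1(\Omega)$, the right-hand side lies in $\undertilde{L}{}^2(\Omega)$, so $\upsi$ solves a vector Poisson equation $-\Delta\upsi=\bs h$ with $\|\bs h\|_{0,\Omega}\lesssim\|\curl\upsi\|_{1,\Omega}+\|f\|_{1,\Omega}$. Interior $\undertilde{H}{}^2$ control of $\upsi$ away from $\Gamma$ then follows from the standard interior estimate for $-\Delta$, so the entire issue is regularity up to the boundary.

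Next I would localize near a single (relatively open) face $F$ of $\Omega$, on which $\mathbf{n}$ is constant; choosing coordinates with $\mathbf{n}=e_3$ and $F\subset\{x_3=0\}$, the two tangential difference quotients $D^\tau_h\upsi$, $\tau\in\{1,2\}$, satisfy $\curl(D^\tau_h\upsi)=D^\tau_h\curl\upsi$ and $\dv(D^\tau_h\upsi)=D^\tau_h f$, whose $L^2$ norms are bounded by $\|\curl\upsi\|_{1,\Omega}$ and $\|f\|_{1,\Omega}$. Crucially, differencing in a direction tangent to $F$ preserves the homogeneous boundary condition on $F$ — the vanishing of $\upsi_3$ in the $H_0(\dv)$ case, or of the tangential components in the $H_0(\curl)$ case — so a local form of Lemma \ref{lem:GRembd} applies to $D^\tau_h\upsi$ and, letting $h\to0$, yields $\partial_\tau\partial_j\upsi\in L^2$ near $F$ for all $\tau\in\{1,2\}$ and $j\in\{1,2,3\}$. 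The only second derivative not controlled this way is the pure normal one $\partial_3^2\upsi$, which I recover from the Poisson equation via $\partial_3^2\upsi=-\bs h-\partial_1^2\upsi-\partial_2^2\upsi\in\undertilde{L}{}^2$. Summing the interior, flat-face, and boundary estimates through a partition of unity then gives $\upsi\in\undertilde{H}{}^2(\Omega)$ together with the asserted bound $\|\upsi\|_2\lesssim\|\curl\upsi\|_{1,\Omega}+\|\dv\upsi\|_{1,\Omega}$.

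The main obstacle is precisely that this clean face-by-face scheme degenerates near the edges and vertices of $\Omega$, where two or more faces with distinct normals meet and no single tangential frame simultaneously preserves all the boundary conditions; a tangential shift along one face carries points off the face near its edges, so the difference-quotient argument breaks down exactly there. This is where the convexity of the polyhedron is indispensable: as already at the first-order level in Lemma \ref{lem:GRembd}, convexity guarantees that the edge and corner contributions carry a favourable sign rather than forcing a loss of regularity. Making this step rigorous requires the local analysis on the model wedge and cone geometries, and it is this part — not the interior or flat-face estimates — that carries the genuine difficulty of the cited result.
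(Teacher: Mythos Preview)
The paper does not prove this lemma at all; it is quoted verbatim as Lemma 3.2 of \cite{Chen.L;Wu.Y;Zhong.L;Zhou.J2016} and used as a black box in the regularity proofs of Theorems \ref{thm:regA} and \ref{thm:regB}. There is therefore no in-paper argument to compare your sketch against.

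On the merits of your outline: the strategy (vector Laplacian identity to control $\partial_n^2\upsi$, tangential difference quotients near a flat face for the remaining second derivatives) is the standard route and is essentially sound in the interior and near the relative interior of each face. You correctly isolate the genuine obstruction --- the edges and vertices of $\partial\Omega$, where convexity enters --- and you are candid that you do not carry that step out. So what you have is an honest outline that stops exactly where the work begins, which is consistent with the paper's own choice simply to cite the result rather than reprove it. One technical caution on the part you do sketch: you appeal to ``a local form of Lemma \ref{lem:GRembd}'' for $D^\tau_h\upsi$, but Lemma \ref{lem:GRembd} is a global statement on $\Omega$, and after localisation by a cutoff the difference quotient carries no boundary condition on the artificial interior boundary of the patch. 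What you actually need there is the elementary half-space version of the $\undertilde{H}{}^1$ embedding (or, equivalently, direct integration by parts on the half-space), not Lemma \ref{lem:GRembd} itself.
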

\paragraph{\bf Proof of Theorem \ref{thm:regA}}
By Lemma \ref{lem:decomp}, we will show the regularity result by dealing with the systems \eqref{eq:subA-1}, \eqref{eq:subA-2} and \eqref{eq:subA-3} sequentially. 

Since $\dv\uf=0$, by \eqref{eq:subA-1}, it holds that $g=0$, and $(\nabla\times\ur,\nabla\times\uv)=(\uf,\uv)$ for any $\uv\in H_0(\curl,\Omega)$. Thus $\nabla\times\nabla\times\ur=\uf\in \undertilde{L}^2(\Omega)$, namely $\nabla\times\ur\in H(\curl,\Omega)$. Also, $\nabla\times\ur\in H_0(\dv,\Omega)$, and it follows that $\nabla\times\ur\in H(\curl,\Omega)\cap H_0(\dv,\Omega)\subset \undertilde{H}{}^1(\Omega)$. Note that by \eqref{eq:subA-1}, $\dv\ur=0$, thus by Lemma \ref{lem:CWZZlem3.2}, $\ur\in \undertilde{H}^2(\Omega)$. This proves \eqref{eq:regAr} and \eqref{eq:regAg}.

Substitute $\nabla\times\ur$ into the system \eqref{eq:subA-2}, standardly we obtain the estimate \eqref{eq:regAphi} and \eqref{eq:regAp}. We refer to \cite{Nicaise.S1997} for the regularity analysis of the 3D Stokes problem.

Further, substitute $\uphi$ into the system \eqref{eq:subA-3}, we obtain $m=0$, $\dv\uu=0$, $\uu\in \undertilde{H}{}^1(\Omega)\cap H_0(\curl,\Omega)$ and $\curl\uu\in \undertilde{H}{}^1(\Omega)$. By Lemma \ref{lem:CWZZlem3.2} again, it holds that $\uu\in\undertilde{H}{}^2(\Omega)$. Moreover, $\nabla\times\uu=\uphi$; this leads to $\nabla\times\uu\in\undertilde{H}{}^2(\Omega)$. 

Summing all above completes the proof. \qed
\begin{remark}
This theorem constructs the regularity result of the boundary value problem {\bf(A)} in forms \eqref{eq:bvpA} and \eqref{eq:vfA}, and confirms the regularity assumption of \cite{Sun.J2016} in the mid of Page 190 there. Moreover, by the same virtue, if further $\uf\in\undertilde{H}{}^1(\Omega)$, by Lemma \ref{lem:CWZZlem3.2}, we will obtain $\curl\ur\in\undertilde{H}{}^2(\Omega)$.
\end{remark}

\subsection{Mixedization of Problem (B$'$)}

We simply repeat the procedure for Problem {\bf (A$'$)}. Define 
$$
U:=H^1_0(\Omega)\times H_0(\curl,\Omega)\times \undertilde{H}{}^1_0(\Omega)\times H_0(\curl,\Omega)\times L^2_0(\Omega).
$$
The mixed variational problem is to find $(m,\uu,\uphi,\uu,p)\in U$, such that 
\begin{equation}\label{eq:mxdvpB'}
\left\{
\begin{array}{ccccccll}
&&&(\ur,\nabla n)&&& = 0, & n\in H^1_0(\Omega)
\\
&(\uu,\uv)&&-(\nabla\times\ur,\nabla\times\uv)&& &=(\uf,\uv),& \uv\in H_0(\curl,\Omega)
\\
&&(\nabla\uphi,\nabla\upsi)&+(\nabla\ur,\upsi)& &+(p,\dv\upsi) & =0, & \upsi\in \undertilde{H}{}^1_0(\Omega)
\\
(\nabla m,\us)&-(\nabla\times\uu,\nabla\times\us)& (\uphi,\nabla\times\us) & &&&=0, & \us\in H_0(\curl,\Omega)
\\
&&(\dv\uphi,q)&&&&=0, &q\in L^2_0(\Omega).
\end{array}
\right.
\end{equation}

\begin{lemma}
Given $\uf\in \undertilde{L}{}^2(\Omega)$, the problem \eqref{eq:mxdvpB'} admits a unique solution $(m,\uu,\uphi,\ur,p)\in U$. Moreover, 
\begin{equation}
\|m\|_{1,\Omega}+\|\uu\|_{\curl,\Omega}+\|\uphi\|_{1,\Omega}+\|\ur\|_{\curl,\Omega}+\|p\|_{0,\Omega}\leqslant C\|\uf\|_{(H_0(\curl,\Omega))'}.
\end{equation}
\end{lemma}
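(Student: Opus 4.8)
The plan is to repeat, with the obvious economies, the Brezzi-theory argument that proved Lemma~\ref{lem:wellposeA'}. First I would recast \eqref{eq:mxdvpB'} as a standard saddle-point problem. Take the primal space $X:=H^1_0(\Omega)\times H_0(\curl,\Omega)\times\undertilde{H}{}^1_0(\Omega)$ equipped with $a((m,\uu,\uphi),(n,\uv,\upsi)):=(\uu,\uv)+(\nabla\uphi,\nabla\upsi)$, and the multiplier space $M:=H_0(\curl,\Omega)\times L^2_0(\Omega)$ equipped with
\[
b((m,\uu,\uphi),(\us,q)):=(\nabla m,\us)-(\nabla\times\uu,\nabla\times\us)+(\uphi,\nabla\times\us)+(\dv\uphi,q).
\]
Testing with $(n,\undertilde{0},\undertilde{0})$, $(\undertilde{0},\uv,\undertilde{0})$, $(\undertilde{0},\undertilde{0},\upsi)$, $(\us,0)$ and $(0,q)$ reproduces the five rows of \eqref{eq:mxdvpB'}, with the only nonzero load being $(n,\uv,\upsi)\mapsto(\uf,\uv)$ on the primal side. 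Continuity of $a$ on $X\times X$ and of $b$ on $X\times M$ is immediate from Cauchy--Schwarz together with the Poincar\'e inequality on $H^1_0(\Omega)$ and $\undertilde{H}{}^1_0(\Omega)$. Note that, in contrast to Problem {\bf(A$'$)}, the zeroth-order term $(\uu,\uv)$ is now present in $a$, so no divergence constraint on $\uu$ (and hence no $g$-variable) is needed.

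Next I would check coercivity of $a$ on the kernel $Z:=\{(m,\uu,\uphi)\in X:\ b((m,\uu,\uphi),(\us,q))=0\ \ \forall (\us,q)\in M\}$. Letting $q$ range over $L^2_0(\Omega)$ forces $\dv\uphi$ to be a constant, which is zero since $\uphi\in\undertilde{H}{}^1_0(\Omega)$ has $\int_\Omega\dv\uphi=0$; taking $\us=\nabla\mu$ with $\mu\in H^1_0(\Omega)$ kills the curl terms and gives $(\nabla m,\nabla\mu)=0$, hence $m=0$; and the remaining identity $(\nabla\times\uu-\uphi,\nabla\times\us)=0$ for all $\us\in H_0(\curl,\Omega)$, tested with $\us=\uu$, yields $\|\nabla\times\uu\|_{0,\Omega}\leqslant\|\uphi\|_{0,\Omega}$. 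Therefore, using the Poincar\'e inequality on $\undertilde{H}{}^1_0(\Omega)$,
\[
a((m,\uu,\uphi),(m,\uu,\uphi))=\|\uu\|_{0,\Omega}^2+\|\nabla\uphi\|_{0,\Omega}^2\gtrsim\|m\|_{1,\Omega}^2+\|\uu\|_{\curl,\Omega}^2+\|\uphi\|_{1,\Omega}^2,
\]
which is the desired ellipticity on $Z$; this step is actually simpler than its {\bf(A$'$)} counterpart precisely because $a$ now controls $\|\uu\|_{0,\Omega}$ directly.

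The substantive part is the inf-sup condition for $b$, and I expect it to be the main obstacle. Given a nonzero $(\us,q)\in M$, I would split $\us=\us_1+\us_2$ by the $L^2$-orthogonal Helmholtz decomposition $H_0(\curl,\Omega)=\nabla H^1_0(\Omega)\oplus X_0$ (with $X_0$ the orthogonal complement, on which $\|\cdot\|_{0,\Omega}\lesssim\|\nabla\times\cdot\|_{0,\Omega}$ by Lemma~\ref{lem:picurl}), and then choose: $m\in H^1_0(\Omega)$ with $\nabla m=\us_1$; $\uphi\in\undertilde{H}{}^1_0(\Omega)$ with $\dv\uphi=q$ and $\|\uphi\|_{1,\Omega}\lesssim\|q\|_{0,\Omega}$ (surjectivity of $\dv:\undertilde{H}{}^1_0(\Omega)\to L^2_0(\Omega)$); and $\uu\in X_0$ solving $(\nabla\times\uu,\nabla\times\uv)=(\uphi-\nabla\times\us,\nabla\times\uv)$ for all $\uv\in H_0(\curl,\Omega)$, which is uniquely solvable by Lemma~\ref{lem:picurl} and satisfies $\|\uu\|_{\curl,\Omega}\lesssim\|\uphi\|_{0,\Omega}+\|\nabla\times\us\|_{0,\Omega}\lesssim\|q\|_{0,\Omega}+\|\us\|_{\curl,\Omega}$. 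For this choice one computes $b((m,\uu,\uphi),(\us,q))=\|\us_1\|_{0,\Omega}^2+\|\nabla\times\us_2\|_{0,\Omega}^2+\|q\|_{0,\Omega}^2\gtrsim\|\us\|_{\curl,\Omega}^2+\|q\|_{0,\Omega}^2$, the last step again absorbing $\|\us_2\|_{0,\Omega}$ into $\|\nabla\times\us_2\|_{0,\Omega}$ via Lemma~\ref{lem:picurl}, while the trial norm obeys $\|m\|_{1,\Omega}+\|\uu\|_{\curl,\Omega}+\|\uphi\|_{1,\Omega}\lesssim\|\us\|_{\curl,\Omega}+\|q\|_{0,\Omega}$. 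Brezzi's theorem then yields existence, uniqueness and the stated bound, the right-hand side being controlled by $\|\uf\|_{(H_0(\curl,\Omega))'}$ since $\uf$ enters only through $(\uf,\uv)$. The care is entirely in arranging the coupled construction of $\uu$ (built \emph{after} $\uphi$) so that the pairing $-(\nabla\times\uu,\nabla\times\us)+(\uphi,\nabla\times\us)$ recovers $\|\nabla\times\us_2\|_{0,\Omega}^2$ without inflating $\|\uu\|_{\curl,\Omega}$; everything else is bookkeeping parallel to Lemma~\ref{lem:wellposeA'}.
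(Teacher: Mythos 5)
Your proposal is correct and follows essentially the same route as the paper's proof: the same bilinear forms $a$ and $b$, the same kernel characterization (the paper notes $\uphi=\nabla\times\uu$ on $Z$, while you derive the weaker but sufficient bound $\|\nabla\times\uu\|_{0,\Omega}\leqslant\|\uphi\|_{0,\Omega}$), and the same inf-sup construction via the Helmholtz splitting $\us=\us_1+\us_2$ with $\uphi$ chosen from the surjectivity of $\dv$ and $\uu$ built afterwards in $(\nabla H^1_0(\Omega))^\perp$. No substantive differences.
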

\begin{proof}
Again, we are going to verify Brezzi's conditions. Define $a((m,\uu,\uphi),(n,\uv,\upsi)):=(\uu,\uv)+(\nabla\uphi,\nabla\upsi)$, and $b((m,\uu,\uphi),(\us,q)):=(\nabla m,\us)-(\nabla\times\uu,\nabla\times\us)+(\uphi,\nabla\times\us)+(\dv\uphi,q)$. The continuity of $a(\cdot,\cdot)$ and $b(\cdot,\cdot)$ associated with $U$ follow immediately. 

Define $Z:=\{(m,\uu,\uphi)\in H^1_0(\Omega)\times H_0(\curl,\Omega)\times \undertilde{H}{}^1_0(\Omega):b((m,\uu,\uphi),(\us,q))=0,\forall\,(\us,q)\in H_0(\curl,\Omega)\times L^2_0(\Omega)\}$. Given $(m,\uu,\uphi)\in Z$, we have $m=0$ and $\uphi=\nabla\times\uu$. Thus the coercivity of $a(\cdot,\cdot)$ on $Z$ follows. 

Given $(\us,q)\in H_0(\curl,\Omega)\times L^2_0(\Omega)$, decompose $\us=\us{}_1+\us{}_2$ with $\us{}_1\in \nabla H^1_0(\Omega)$, and $\us{}_2\in(\nabla H^1_0(\Omega))^\perp$. Set $\uphi\in\undertilde{H}{}^1_0(\Omega)$, such that $(\dv\uphi,q)=(q,q)$ and $\|\uphi\|_1\leqslant C\|q\|_0$, $\nabla m=\us{}_1$, and $\uu\in (\nabla H^1_0(\Omega))^\perp$ such that $(\uphi-\nabla\times \uu,\nabla\times\uv)=(\nabla\times\us,\nabla\times\uv)$ for any $\uv\in H_0(\curl,\Omega)$. Then $b((m,\uu,\uphi),(\us,q))=(\us{}_1,\us{}_1)+(\nabla\times\us,\nabla\times\us)+(q,q)$, and $\|m\|_1+\|\uu\|_{\curl}+\|\uphi\|_{1,\Omega}\leqslant C(\|\us\|_{\curl,\Omega}+\|q\|_{0,\Omega})$. This leads to the inf-sup condition and completes the proof. 
\end{proof}
Similar to Lemma \ref{lem:equiv}, we can obtain the equivalence result below.
\begin{lemma}
The problem \eqref{eq:mxdvpB'} is equivalent to the primal problem {\bf (B$'$)}.
\end{lemma}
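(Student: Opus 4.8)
The plan is to mirror the proof of Lemma~\ref{lem:equiv}: match the solution of the primal problem {\bf (B$'$)} with that of the mixed problem \eqref{eq:mxdvpB'}. Both are well-posed --- {\bf (B$'$)} because it is equivalent to \eqref{eq:vfB}, which is covered by Lemmas~\ref{lem:picurl} and~\ref{lem:hongetal}, and \eqref{eq:mxdvpB'} by the preceding lemma --- so in principle transferring a solution in a single direction, together with uniqueness, already suffices; I would nonetheless spell out both directions.

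For \eqref{eq:mxdvpB'}~$\Rightarrow$~{\bf (B$'$)}, let $(m,\uu,\uphi,\ur,p)$ be a solution. Testing the fourth equation with $\us=\nabla m\in H_0(\curl,\Omega)$ annihilates the last two terms and leaves $\|\nabla m\|_{0,\Omega}^2=0$, so $m=0$. The last equation forces $\dv\uphi$ to be constant, and its integral over $\Omega$ vanishes since $\uphi\in\undertilde{H}{}^1_0(\Omega)$; hence $\dv\uphi=0$. With $m=0$, the fourth equation reads $(\uphi-\nabla\times\uu,\nabla\times\us)=0$ for all $\us\in H_0(\curl,\Omega)$, so $\nabla\times(\uphi-\nabla\times\uu)=\undertilde{0}$; combined with $\dv(\uphi-\nabla\times\uu)=0$ and $(\uphi-\nabla\times\uu)\cdot\mathbf{n}=0$ on $\Gamma$ (because $\uphi$ vanishes on $\Gamma$ and $\uu\times\mathbf{n}=\undertilde{0}$), the simple connectedness of $\Omega$ --- a curl-free, divergence-free field with vanishing normal trace is zero --- forces $\uphi=\nabla\times\uu$, so $\uu\in\undertilde{H}{}^1_0(\curl,\Omega)$. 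Now take any $\uv\in\undertilde{H}{}^1_0(\curl,\Omega)$; then $\upsi:=\nabla\times\uv\in\undertilde{H}{}^1_0(\Omega)$ with $\dv\upsi=0$, so the third equation tested with this $\upsi$ gives $(\nabla\times\ur,\nabla\times\uv)=-(\nabla\uphi,\nabla\nabla\times\uv)=-(\nabla\nabla\times\uu,\nabla\nabla\times\uv)$ (the pressure term drops). Substituting into the second equation tested with the same $\uv$ yields $(\nabla\nabla\times\uu,\nabla\nabla\times\uv)+(\uu,\uv)=(\uf,\uv)$, which is {\bf (B$'$)}.

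For {\bf (B$'$)}~$\Rightarrow$~\eqref{eq:mxdvpB'}, let $\uu$ solve {\bf (B$'$)} and set $\uphi:=\nabla\times\uu\in\undertilde{H}{}^1_0(\Omega)$ (so $\dv\uphi=0$) and $m:=0$. Testing {\bf (B$'$)} with $\nabla\lambda$, $\lambda\in H^1_0(\Omega)$ --- an admissible test field since $\nabla\times\nabla\lambda=\undertilde{0}$ and $\nabla\lambda\times\mathbf{n}=\undertilde{0}$ --- gives $(\uu-\uf,\nabla\lambda)=0$ for all such $\lambda$, which is exactly the compatibility condition making the auxiliary problem ``find $\ur\in H_0(\curl,\Omega)$ with $(\ur,\nabla n)=0$ for all $n\in H^1_0(\Omega)$ and $(\nabla\times\ur,\nabla\times\uv)=(\uu-\uf,\uv)$ for all $\uv\in H_0(\curl,\Omega)$'' uniquely solvable; this furnishes the first two equations. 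The fourth equation then holds because $\uphi=\nabla\times\uu$, the fifth because $\dv\uphi=0$, and a suitable $p\in L^2_0(\Omega)$ for the third equation exists because the continuous functional $\upsi\mapsto(\nabla\uphi,\nabla\upsi)+(\nabla\times\ur,\upsi)$ vanishes on $\{\upsi\in\undertilde{H}{}^1_0(\Omega):\dv\upsi=0\}$ --- which one checks from the defining relation of $\ur$ and the identity $(\nabla\uw,\nabla\upsi)=(\nabla\times\uw,\nabla\times\upsi)+(\dv\uw,\dv\upsi)$ on $\undertilde{H}{}^1_0(\Omega)$ --- so that the surjectivity of $\dv\colon\undertilde{H}{}^1_0(\Omega)\to L^2_0(\Omega)$ supplies $p$. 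Thus $(0,\uu,\uphi,\ur,p)$ solves \eqref{eq:mxdvpB'}.

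The main obstacle is twofold: the identification $\uphi=\nabla\times\uu$, which relies on the topological triviality of $\Omega$; and, in the converse direction, the correct choice of $\ur$. One should \emph{not} take $\ur=-\nabla\times\nabla\times\uu$, whose tangential trace need not vanish so that it need not lie in $H_0(\curl,\Omega)$; instead $\ur$ must be defined through the auxiliary double-curl problem above, whose solvability rests precisely on the orthogonality $(\uu-\uf,\nabla\lambda)=0$ extracted from {\bf (B$'$)}. Everything else is the routine integration-by-parts bookkeeping already carried out for Lemma~\ref{lem:equiv}.
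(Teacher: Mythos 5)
Your proof is correct and follows the same solution-matching strategy the paper intends (the paper simply refers back to the proof of Lemma \ref{lem:equiv}): exhibit a mixed solution from the primal one, and conversely recover the primal equation from the mixed system, with uniqueness on both sides closing the argument. You supply considerably more detail than the paper does --- notably the compatibility condition $(\uu-\uf,\nabla\lambda)=0$ obtained by testing with gradients, the definition of $\ur$ through a well-posed auxiliary double-curl problem rather than an explicit formula, and the recovery of $p$ via the surjectivity of $\dv$ --- and your caution against taking $\ur=-\nabla\times\nabla\times\uu$ pointwise is well placed, since the explicit tuple exhibited in the proof of Lemma \ref{lem:equiv} glosses over precisely this membership-in-$H_0(\curl,\Omega)$ issue.
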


\begin{theorem}\label{thm:regB}
Let $\Omega$ be a convex polyhedron and $(m,\uu,\uphi,\ur,p)$ be the solution of \eqref{eq:mxdvpB'}. Then $m=0$, $\curl\uu\in \undertilde{H}{}^2(\Omega)$, $\uphi\in \undertilde{H}{}^2(\Omega)\cap \undertilde{H}{}^1_0(\Omega)$, $\ur\in \undertilde{H}{}^1(\Omega)\cap H_0(\curl,\Omega)$, $\curl\ur\in \undertilde{H}{}^1(\Omega)$ and $p\in L^2_0(\Omega)\cap H^1(\Omega)$. Further, if $\dv\uf=0$, then $\uu\in \undertilde{H}{}^2(\Omega)$.
\end{theorem}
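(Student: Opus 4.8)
The plan is to mimic the proof of Theorem \ref{thm:regA}, exploiting the fact that the system \eqref{eq:mxdvpB'} decouples into the same three blocks as in Lemma \ref{lem:decomp}, solved in the order $\ur\to\uphi\to\uu$. First I would observe that the first two rows of \eqref{eq:mxdvpB'}, together with the test functions $n\in H^1_0(\Omega)$ and $\uv\in H_0(\curl,\Omega)$, form a mixed problem for $(\ur,\uu)$ of mixed-curl type. The crucial point here is that, unlike in Problem {\bf (A$'$)}, we do \emph{not} have $\dv\uf=0$ in general, so the would-be ``multiplier'' variable $g$ is replaced by the genuine lower-order term $(\uu,\uv)$; the first row still forces $(\ur,\nabla n)=0$, i.e.\ $\dv\ur=0$. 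Taking $\uv=\nabla t$ with $t\in H^1_0(\Omega)$ in the second row gives $(\uu,\nabla t)=(\uf,\nabla t)$, so $\dv\uu=-\dv\uf$ in the distributional sense; in particular $\dv\uu=0$ exactly when $\dv\uf=0$, which is where the final clause of the theorem comes from. From the second row with general $\uv$ we read off $\curl\curl\ur=\uf-\uu\in\undertilde{L}^2(\Omega)$, hence $\curl\ur\in H(\curl,\Omega)$; combined with $\curl\ur\in H_0(\dv,\Omega)$ and Lemma \ref{lem:GRembd} this yields $\curl\ur\in\undertilde{H}^1(\Omega)$. Since also $\dv\ur=0\in\undertilde{H}^1(\Omega)$ and $\ur\in H_0(\curl,\Omega)\cap H(\dv,\Omega)$, Lemma \ref{lem:CWZZlem3.2} gives $\ur\in\undertilde{H}^1(\Omega)$. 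This establishes $\ur\in\undertilde{H}^1(\Omega)\cap H_0(\curl,\Omega)$ and $\curl\ur\in\undertilde{H}^1(\Omega)$.

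Next I would feed $\ur$ into the third and fifth rows of \eqref{eq:mxdvpB'}, which constitute exactly a $3$D Stokes-type saddle-point problem for $(\uphi,p)$ with right-hand side $-(\nabla\times\ur,\upsi)$ (note $\nabla\times\ur\in\undertilde{L}^2(\Omega)$ already suffices). By the standard elliptic regularity for the Stokes system on a convex polyhedron (citing \cite{Nicaise.S1997} as in the proof of Theorem \ref{thm:regA}), one obtains $\uphi\in\undertilde{H}^2(\Omega)\cap\undertilde{H}^1_0(\Omega)$ and $p\in H^1(\Omega)\cap L^2_0(\Omega)$, together with the estimate $\|\uphi\|_{2,\Omega}+\|p\|_{1,\Omega}\lesssim\|\curl\ur\|_{0,\Omega}$.

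Finally I would substitute this $\uphi$ into the fourth row of \eqref{eq:mxdvpB'}, which with the last row of \eqref{eq:mxdvfA'}-type structure (the constraint $(\uu,\nabla h)=0$ is implicit through the coupling with $m$) is precisely the block \eqref{eq:subA-3}: solve for $(\uu,m)$ with $(\nabla\times\uu,\nabla\times\us)-(\nabla m,\us)=(\uphi,\nabla\times\us)$. Taking $\us=\nabla t$ forces $m=0$, and then $\curl\uu=\uphi\in\undertilde{H}^2(\Omega)$, which is the asserted regularity of $\curl\uu$. For the conditional statement, recall from the first paragraph that $\dv\uf=0$ implies $\dv\uu=0$; then $\uu\in H_0(\curl,\Omega)\cap H(\dv,\Omega)$ with $\curl\uu=\uphi\in\undertilde{H}^1(\Omega)$ and $\dv\uu=0\in\undertilde{H}^1(\Omega)$, so Lemma \ref{lem:CWZZlem3.2} delivers $\uu\in\undertilde{H}^2(\Omega)$. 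I expect the main obstacle to be purely bookkeeping: checking that the lower-order term $(\uu,\uv)$ in the $(\ur,\uu)$-block does not spoil the argument that $\curl\ur\in H(\curl,\Omega)$ — this is fine because $\uu\in\undertilde{L}^2(\Omega)$ is already known from well-posedness — and being careful that, without $\dv\uf=0$, one only gets $\uu\in\undertilde{H}^1(\Omega)$ (via Lemma \ref{lem:GRembd}, since $\uu\in H_0(\curl,\Omega)$ and $\dv\uu=-\dv\uf\in L^2(\Omega)$ need not lie in $\undertilde{H}^1(\Omega)$), so the full $\undertilde{H}^2$ regularity of $\uu$ genuinely requires the extra hypothesis.
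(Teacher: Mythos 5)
Your proposal follows essentially the same route as the paper's proof: bootstrap regularity for $\ur$ from $\dv\ur=0$ and $\curl\curl\ur=\uu-\uf\in\undertilde{L}{}^2(\Omega)$ via Lemmas \ref{lem:GRembd} and \ref{lem:CWZZlem3.2}, then Stokes regularity on the convex polyhedron for $(\uphi,p)$, then $m=0$ and $\curl\uu=\uphi$ from the fourth row, and Lemma \ref{lem:CWZZlem3.2} once more for $\uu$ under $\dv\uf=0$. The only blemishes are immaterial: a sign slip ($\dv\uu=\dv\uf$, not $-\dv\uf$), and the word ``decouples'' is too strong since the $(\ur,\uu)$ block contains the unknown $\uu$ --- but you correctly observe that $\uu\in\undertilde{L}{}^2(\Omega)$ from well-posedness is all the bootstrap needs, which is precisely why the paper treats \eqref{eq:mxdvpB'} without a sequential decomposition.
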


\begin{proof}
By the stability of the system and since $\dv\ur=0$, we obtain $\ur\in \undertilde{H}{}^1(\Omega)$. As $(\nabla\times\ur,\nabla\times\uv)=-(\uf-\uu,\uv)$ for any $\uv\in H_0(\curl,\Omega)$, we have $\nabla\times\nabla\times\ur=\uf-\uu$, thus $\nabla\times\ur\in H(\curl,\Omega)\cap H_0(\dv,\Omega)\subset \undertilde{H}{}^1(\Omega)$. Then, standardly, we obtain $\uphi\in\undertilde{H}{}^2(\Omega)\cap \undertilde{H}{}^1_0(\Omega)$, and $p\in H^1(\Omega)\cap L^2_0(\Omega)$. By the second last line of the system, $m=0$ and $\nabla\times\uu=\uphi$; thus $\nabla\times\uu\in\undertilde{H}{}^2(\Omega)$. Further, if $\dv\uf=0$, then $\dv\uu=0$; this combined with that $\curl\uu\in\undertilde{H}{}^1_0(\Omega)$ leads to that $\uu\in\undertilde{H}{}^2(\Omega)$. The proof is completed.
\end{proof}
\begin{remark}
Theorem \ref{thm:regB} constructs regularity of {\bf (B)} in forms \eqref{eq:bvpB} and \eqref{eq:mxdvpB'}. It also confirms the validity of assumptions in Lemma 3.8 and Theorem 3.12 of \cite{Zheng.B;Hu.Q;Xu.J2011} by showing that $\uu\in\undertilde{H}{}^2(\Omega)$ and $\nabla\times\uu\in\undertilde{H}{}^2(\Omega)$ when $\uf\in \undertilde{L}{}^2(\Omega)$ with $\dv\uf=0$. 
The argument of this proof can be repeated onto that of Theorem \ref{thm:regA}. The difference between the two proofs is that we do not try to decompose \eqref{eq:mxdvpB'} to subsystems sequentially.  
\end{remark}

\section{Finite element discretizations of the mixed formulations}
\label{sec:dis}

For simplicity, we only consider the conforming finite element discretizations. Namely, we choose finite element spaces $H^1_{h0}\subset H^1_0(\Omega)$, $H_{h0}(\curl)\subset H_0(\curl,\Omega)$, $\undertilde{H}{}^1_{h0}\subset \undertilde{H}{}^1_0(\Omega)$ and $L^2_{h0}\subset L^2_0(\Omega)$, and use them to replace the respective Sobolev spaces when existing in the mixed variational form to generate a discretisation scheme. Particularly, the spaces $H^1_0(\Omega)$ and $H_0(\curl,\Omega)$ may appear more than once in the mixed formulation; this hints us to use different $H^{1,a}_{h0}$ and $H^{1,b}_{h0}$ and different $H^a_{h0}(\curl)$ and $H^b_{h0}(\curl)$ as their respective discretisation when convenient. In this section, we present some conditions of the well-posed-ness of the discretised system, construct generally its convergence analysis, and give some examples.

\subsection{Discretize Problem (A$'$)}

Let $H^{1,a}_{h0}\times H^a_{h0}(\curl)$ and $H^{1,b}_{h0}\times H^b_{h0}(\curl)$, identical or not, be two finite element subspaces of $H^1_0(\Omega)\times H_0(\curl,\Omega)$, and finite element space $\undertilde{H}{}^1_{h0}\times L^2_{h0}\subset \undertilde{H}{}^1_0(\Omega)\times L^2_0(\Omega)$. Define $V_h:=H^{1,a}_{h0}\times H^a_{h0}(\curl)\times \undertilde{H}{}^1_{h0}\times L^2_{h0}\times H^b_{h0}(\curl)\times H^{1,b}_{h0}$, and $V_h':=H^{1,b}_{h0}\times H^b_{h0}(\curl)\times \undertilde{H}{}^1_{h0}\times L^2_{h0}\times H^a_{h0}(\curl)\times H^{1,a}_{h0}$. The discretized formulation of {\bf (A$'$) } is to find $(m_h,\uu{}_h,\uphi{}_h,p_h,\ur{}_h,g_h)\in V_h$, such that, for any $(n_h,\uv{}_h,\upsi{}_h,q_h,\us{}_h,h_h)\in V_h'$,

\begin{equation}\label{eq:dismxdvfA'}
\left\{
\begin{array}{lll}
a(( m_h,\uu{}_h,\uphi{}_h),( n_h,\uv{}_h,\upsi{}_h))&+b(( p_h,\ur{}_h, g_h),(n_h,\uv{}_h,\upsi{}_h)) &= (\uf,\uv{}_h), 
\\
b(( m_h,\uu{}_h,\uphi{}_h), ( q_h,\us{}_h, h_h)) &&=0,
\end{array}
\right.
\end{equation}
where $a(\cdot,\cdot)$ and $b(\cdot,\cdot)$ follows the definitions \eqref{eq:aforA} and \eqref{eq:bforA}. The system is symmetric indefinite when $V_h=V_h'$, and unsymmetric otherwise. 

For the well-posedness of the discretised system, we set up some assumptions below.
\begin{description}
\item[A1] The exact relation holds: $\nabla H^{1,\alpha}_{h0}=\{\us{}_h\in H_{h0}^\alpha(\curl):\curl\us{}_h=0\},\  \alpha=a,b.$
\item[A2] The Poincar\'e inequality holds: $\|\us{}_h\|_{0,\Omega}\leqslant C\|\nabla\times\us{}_h\|_{0,\Omega}$ for $\us{}_h\in (\nabla  H^{1,\alpha}_{h0})^\perp$. Here $ (\nabla H^{1,\alpha}_{h0})^\perp$ is the orthogonal completion of $(\nabla H^{1,\alpha}_{h0})$ in $H_{h0}^\alpha(\curl)$ in $L^2$ inner product, $\alpha=a,b$.
\item[A3] The inf-sup condition holds: $\displaystyle\inf_{q_h\in L^2_{h0}\setminus\{0\}}\sup_{\upsi{}_h\in \undertilde{H}{}^1_{h0}\setminus\{\undertilde{0}\}}\frac{(\dv\upsi{}_h,q_h)}{\|q_h\|_{0,\Omega}\|\upsi{}_h\|_{1,\Omega}}\geqslant C$.
\end{description}

\begin{remark}
The assumptions {\bf A1} and {\bf A2} are imposed on the space pair $H^{1,\alpha}_{h0}$ and $H_{h0}^\alpha(\curl)$, $\alpha=a,b$, and the assumption {\bf A3} is imposed on the space pair $\undertilde{H}{}^1_{h0}$ and $L^2_{h0}$. This allows us to choose the three space pairs independently. Provided {\bf A3} itself, the system \eqref{eq:dismxdvfA'} can be decomposed to three subproblems and solved sequentially. 
\end{remark}

\begin{lemma}
Provided assumptions \textbf{A1}$\sim$\textbf{A3}, the problem \eqref{eq:dismxdvfA'} is well-posed on $V_h$.
\end{lemma}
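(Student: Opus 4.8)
The plan is to verify the Babuška–Brezzi conditions for the discretized bilinear forms $a(\cdot,\cdot)$ and $b(\cdot,\cdot)$ restricted to the pairing $V_h \times V_h'$, mimicking the proof of Lemma~\ref{lem:wellposeA'} at the discrete level but using assumptions \textbf{A1}$\sim$\textbf{A3} in place of the continuous Helmholtz decomposition, Poincaré inequality, and Stokes inf-sup condition. Since the continuity of $a(\cdot,\cdot)$ and $b(\cdot,\cdot)$ is inherited directly from the conforming inclusions $H^{1,\alpha}_{h0}\subset H^1_0(\Omega)$, $H^\alpha_{h0}(\curl)\subset H_0(\curl,\Omega)$, $\undertilde{H}{}^1_{h0}\subset \undertilde{H}{}^1_0(\Omega)$, $L^2_{h0}\subset L^2_0(\Omega)$, the work is to establish (i) coercivity of $a(\cdot,\cdot)$ on the discrete kernel $Z_h$, and (ii) the discrete inf-sup condition for $b(\cdot,\cdot)$ between the two (possibly different) discrete spaces. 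Because the system is unsymmetric when $V_h\neq V_h'$, strictly one must check the coercivity of $a$ on both the kernel associated with test functions in $V_h'$ and with trial functions in $V_h$, and likewise two-sided inf-sup for $b$; but by the symmetry of the construction (swapping the roles of the $a$- and $b$-labelled spaces), it suffices to carry out the argument once.

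First I would identify the discrete kernel $Z_h := \{(m_h,\uu{}_h,\uphi{}_h)\in H^{1,a}_{h0}\times H^a_{h0}(\curl)\times \undertilde{H}{}^1_{h0} : b((m_h,\uu{}_h,\uphi{}_h),(q_h,\us{}_h,h_h))=0\ \forall (q_h,\us{}_h,h_h)\in L^2_{h0}\times H^b_{h0}(\curl)\times H^{1,b}_{h0}\}$ and show it is contained in the obvious analogue of the continuous kernel. Testing with $h_h$ gives $(\uu{}_h,\nabla h_h)=0$; by \textbf{A1} this says $\uu{}_h$ is $L^2$-orthogonal to the curl-free part of $H^b_{h0}(\curl)$, hence $\uu{}_h\in (\nabla H^{1,b}_{h0})^\perp$, so \textbf{A2} gives $\|\uu{}_h\|_{0,\Omega}\lesssim \|\curl\uu{}_h\|_{0,\Omega}$. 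Testing with $\us{}_h=\nabla n_h$ for $n_h\in H^{1,b}_{h0}$ forces $\nabla m_h$ orthogonal to all such gradients; here one needs $H^{1,a}_{h0}$ and $H^{1,b}_{h0}$ compatible enough that this yields $m_h=0$ — if $a=b$ this is immediate, and in the general case one argues via \textbf{A1} that $\nabla m_h\in \nabla H^{1,b}_{h0}$ so the orthogonality to $\nabla H^{1,b}_{h0}$ forces $\nabla m_h=0$. Finally testing with a suitable $\us{}_h$ (taking $\us{}_h=\uu{}_h$ when $a=b$, or using \textbf{A1} to realize $\curl\uu{}_h$ via a discrete curl in $H^b_{h0}(\curl)$ otherwise) gives $\|\curl\uu{}_h\|_{0,\Omega}^2 = (\uphi{}_h,\curl\us{}_h)\lesssim \|\uphi{}_h\|_{0,\Omega}\|\curl\uu{}_h\|_{0,\Omega}$, so $\|\uu{}_h\|_{\curl,\Omega}\lesssim \|\uphi{}_h\|_{0,\Omega}\lesssim \|\nabla\uphi{}_h\|_{0,\Omega}$ by the Poincaré–Friedrichs inequality on $\undertilde{H}{}^1_{h0}\subset \undertilde{H}{}^1_0(\Omega)$. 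Together with $m_h=0$ this gives $a((m_h,\uu{}_h,\uphi{}_h),(m_h,\uu{}_h,\uphi{}_h)) = \|\nabla\uphi{}_h\|_{0,\Omega}^2 \gtrsim \|m_h\|_{1,\Omega}^2+\|\uu{}_h\|_{\curl,\Omega}^2+\|\uphi{}_h\|_{1,\Omega}^2$, the desired coercivity on $Z_h$.

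For the discrete inf-sup condition I would mirror the construction in Lemma~\ref{lem:wellposeA'}: given $(q_h,\us{}_h,h_h)\in L^2_{h0}\times H^b_{h0}(\curl)\times H^{1,b}_{h0}$, split $\us{}_h = \us{}_{h,1}+\us{}_{h,2}$ with $\us{}_{h,1}\in\nabla H^{1,b}_{h0}$ and $\us{}_{h,2}\in (\nabla H^{1,b}_{h0})^\perp$ (the discrete Helmholtz decomposition guaranteed by \textbf{A1}); choose $\uphi{}_h\in\undertilde{H}{}^1_{h0}$ realizing $(\dv\uphi{}_h,q_h)=\|q_h\|_{0,\Omega}^2$ with $\|\uphi{}_h\|_{1,\Omega}\lesssim\|q_h\|_{0,\Omega}$ from the discrete Stokes inf-sup condition \textbf{A3}; pick $m_h\in H^{1,a}_{h0}$ with $\nabla m_h=$ the discrete-gradient representative of $\us{}_{h,1}$ (again needing enough compatibility between the $a$- and $b$-gradient spaces, or just $a=b$); and choose $\uu{}_h = \uu{}_{h,1}+(\text{a discrete lift of }\nabla h_h)$ with $\uu{}_{h,1}\in (\nabla H^{1,a}_{h0})^\perp$ solving $(\uphi{}_h - \text{something},\curl\uv{}_h)=(\curl\us{}_h,\curl\uv{}_h)$ for all $\uv{}_h$ in the relevant discrete curl space — a well-posed problem on $(\nabla H^{1,a}_{h0})^\perp$ by \textbf{A1}–\textbf{A2}. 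Evaluating $b$ at this tuple produces $\|q_h\|_{0,\Omega}^2 + \|\us{}_{h,1}\|_{0,\Omega}^2 + \|\curl\us{}_{h,2}\|_{0,\Omega}^2 + \|\nabla h_h\|_{0,\Omega}^2 \gtrsim \|q_h\|_{0,\Omega}^2 + \|\us{}_h\|_{\curl,\Omega}^2 + \|h_h\|_{1,\Omega}^2$, while the norms of $m_h,\uu{}_h,\uphi{}_h$ are controlled by the data norms, exactly as in the continuous case. Invoking Brezzi's theorem then yields well-posedness of \eqref{eq:dismxdvfA'} on $V_h$.

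The main obstacle is the bookkeeping caused by allowing $H^{1,a}_{h0},H^a_{h0}(\curl)$ to differ from $H^{1,b}_{h0},H^b_{h0}(\curl)$: the bilinear form $b$ couples the $a$-spaces (through $\nabla m$ and $\curl\uu$ on the trial side) with the $b$-spaces (through $\us$ and $h$ on the test side), so one must be careful that the discrete Helmholtz orthogonalities and the identification $m_h=0$ are done with respect to the correct space on each side, and that the inf-sup construction selects $m_h\in H^{1,a}_{h0}$ from data living in $H^b_{h0}(\curl)$. The clean resolution is to observe that \textbf{A1} makes $\nabla H^{1,\alpha}_{h0}$ intrinsically characterized as the curl-kernel inside $H^\alpha_{h0}(\curl)$, so the pairing $(\nabla m_h,\us{}_h)$ only ever sees the curl-free part of $\us{}_h$ and the problem decouples cleanly; once this is noted, the $a\neq b$ case reduces essentially verbatim to the $a=b$ case. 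A secondary, routine point is to confirm the kernel inclusion holds with equality in the sense needed (i.e. that the $b$-kernel is exactly the set where the coercivity argument applies), which follows from the same testing arguments. No genuinely new estimate beyond \textbf{A1}$\sim$\textbf{A3} and the conforming Poincaré inequality on $\undertilde{H}{}^1_{h0}$ is required.
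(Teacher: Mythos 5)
Your proposal is correct and takes essentially the same route as the paper: for $V_h=V_h'$ a verbatim discrete repetition of the Brezzi verification in Lemma~\ref{lem:wellposeA'} with \textbf{A1}--\textbf{A3} replacing the continuous Helmholtz decomposition, Poincar\'e inequality and Stokes inf-sup, and for $V_h\neq V_h'$ a reduction to decoupled subproblems (the paper decomposes \eqref{eq:dismxdvfA'} into three sequentially solvable saddle-point systems as in Lemma~\ref{lem:decomp}). One remark: in the paper's ordering of $V_h'$ the constraint equation tests $(m_h,\uu{}_h,\uphi{}_h)$ against $(q_h,\us{}_h,h_h)\in L^2_{h0}\times H^a_{h0}(\curl)\times H^{1,a}_{h0}$, i.e.\ $a$-labelled spaces pair with $a$-labelled spaces (and $b$ with $b$ in the first equation), so the $a$--$b$ cross-coupling you spend effort guarding against does not actually arise and your kernel and inf-sup arguments go through within matched pairs.
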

\begin{proof}
When $H^{1,a}_{h0}=H^{1,b}_{h0}$ and $H^a_{h0}(\curl)=H^b_{h0}(\curl)$, namely $V_h=V_h'$, the proof is essentially the same as that of Lemma \ref{lem:wellposeA'}; otherwise, we can decompose the system to three subproblems and analyse the subproblems one by one, and the result can be proved.
\end{proof}

The convergence of the scheme is surveyed in the lemma below.
\begin{lemma}\label{lem:errA}
Let $(m,\uu,\uphi,p,\ur,g)\in V$ and $(m_h,\uu{}_h,\uphi{}_h,p_h,\ur{}_h,g_h)\in V_h$ be the solutions of \eqref{eq:mxdvfA'} and \eqref{eq:dismxdvfA'}, respectively. Provided assumptions {\bf A1}$\sim${\bf A3}, it holds that
\begin{enumerate}
\item $\displaystyle\|\ur-\ur{}_h\|_{\curl,\Omega}\leqslant C\inf_{\us{}_h\in H^b_{h0}(\curl)}\|\ur-\us{}_h\|_{\curl,\Omega};$
\item $\displaystyle \|\uphi-\uphi{}_h\|_{1,\Omega}+\|p-p_h\|_{0,\Omega}\leqslant C\big[\inf_{\upsi{}_h\in\undertilde{H}{}^1_{h0},q_h\in L^2_{h0}}(\|\uphi-\upsi{}_h\|_{1,\Omega}+\|p-q_h\|_{0,\Omega})+\|\ur-\ur{}_h\|_{0,\Omega}\big]$;
\item $\displaystyle \|\uu-\uu{}_h\|_{\curl,\Omega} \leqslant C\big[\inf_{\uv{}_h\in H^a_{h0}(\curl)}\|\uu-\uv{}_h\|_{\curl,\Omega}+\|\uphi-\uphi{}_h\|_{0,\Omega}\big]$;
\item $m_h=0=m$, $g_h=0=g$.
\end{enumerate}
\end{lemma}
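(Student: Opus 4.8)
The plan is to prove Lemma \ref{lem:errA} by exploiting the triangular (cascade) structure already exhibited in Lemma \ref{lem:decomp}, so that the discrete system \eqref{eq:dismxdvfA'} splits — exactly as in the continuous case — into the three subproblems \eqref{eq:subA-1}, \eqref{eq:subA-2}, \eqref{eq:subA-3} with the Sobolev spaces replaced by their finite element counterparts. First I would observe that assumptions \textbf{A1}, \textbf{A2} give the stable discrete Helmholtz decomposition $H^\alpha_{h0}(\curl)=\nabla H^{1,\alpha}_{h0}\oplus(\nabla H^{1,\alpha}_{h0})^\perp$ with a discrete Poincar\'e inequality on the complement, so that the discrete analogues of \eqref{eq:subA-1} and \eqref{eq:subA-3} are each a well-posed mixed (saddle-point) problem of the curl-curl type; assumption \textbf{A3} does the same for the discrete Stokes-type problem \eqref{eq:subA-2}. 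The identity $g_h=0=g$ and $m_h=0=m$ in item (4) is then immediate: taking $\uv{}_h=\nabla g_h$ in the first block forces $\|\nabla g_h\|_{0,\Omega}^2=-(\uf,\nabla g_h)=0$ since $\dv\uf=0$ (so $(\uf,\nabla g_h)=0$), and similarly for $m_h$ from the last block; on the continuous side this is already recorded in Lemma \ref{lem:decomp} and Theorem \ref{thm:regA}.

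Next I would prove the three error bounds in cascade. For item (1), since $g=g_h=0$, the $\ur$-equations in \eqref{eq:mxdvfA'} and \eqref{eq:dismxdvfA'} are exactly the (mixed formulation of the) curl-curl problem $\curl\curl\ur=\uf$ with $\dv\ur=0$, in $H_0(\curl,\Omega)$ resp. $H^b_{h0}(\curl)$. I would invoke the standard Babu\v{s}ka–Brezzi error estimate for mixed problems: the discrete inf-sup stability (from \textbf{A1}–\textbf{A2}) plus Galerkin orthogonality give the quasi-optimal bound $\|\ur-\ur{}_h\|_{\curl,\Omega}\lesssim\inf_{\us{}_h\in H^b_{h0}(\curl)}\|\ur-\us{}_h\|_{\curl,\Omega}$, with the Lagrange-multiplier part contributing nothing because both multipliers vanish. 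For item (2), the data of the discrete Stokes-type problem \eqref{eq:subA-2} is $-\curl\ur{}_h$, whereas the continuous problem has data $-\curl\ur$; writing the error as (discretization error for the same data) + (data perturbation $\curl(\ur-\ur{}_h)$) and using the continuous dependence of the Stokes solution on its right-hand side in $\undertilde{L}{}^2$, I get $\|\uphi-\uphi{}_h\|_{1,\Omega}+\|p-p_h\|_{0,\Omega}\lesssim\inf(\|\uphi-\upsi{}_h\|_{1,\Omega}+\|p-q_h\|_{0,\Omega})+\|\ur-\ur{}_h\|_{0,\Omega}$. Item (3) is entirely parallel: the data of \eqref{eq:subA-3} is $\uphi$ resp. $\uphi{}_h$ (appearing through $(\uphi,\curl\us)$), so the curl-curl mixed error estimate plus the data perturbation $\|\uphi-\uphi{}_h\|_{0,\Omega}$ yields $\|\uu-\uu{}_h\|_{\curl,\Omega}\lesssim\inf_{\uv{}_h\in H^a_{h0}(\curl)}\|\uu-\uv{}_h\|_{\curl,\Omega}+\|\uphi-\uphi{}_h\|_{0,\Omega}$.

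The one point that needs a little care — and which I expect to be the main obstacle — is justifying that the discrete system \eqref{eq:dismxdvfA'} really does decouple into the three cascaded subproblems in the unsymmetric case $V_h\ne V_h'$, i.e. that the test-space reshuffling between $V_h$ and $V_h'$ is precisely what makes the block structure lower/upper triangular in the right way. Concretely, one has to check that (a) the $n_h$-row only tests $\ur{}_h$, (b) the $h_h$-row only tests $\uu{}_h$, and (c) the coupling terms $(\curl\ur{}_h,\upsi{}_h)$ and $(\uphi{}_h,\curl\us{}_h)$ feed forward but never backward, so that $\ur{}_h,g_h$ can be solved first, then $\uphi{}_h,p_h$, then $\uu{}_h,m_h$; this is exactly the reason $H^a_{h0}(\curl)$ and $H^b_{h0}(\curl)$ occur in the swapped positions in $V_h$ versus $V_h'$. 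Once this triangular structure is in hand, everything reduces to three invocations of textbook mixed-FEM theory, and I would note only that the hidden constants depend on the discrete inf-sup / Poincar\'e constants from \textbf{A1}–\textbf{A3} (hence, under shape-regularity, are $h$-independent), exactly as in the $\cequiv$/$\lesssim$ convention fixed earlier. Finally, for the symmetric case $V_h=V_h'$ one may alternatively argue directly from the discrete Brezzi stability established in the preceding lemma, but the cascade argument already covers it.
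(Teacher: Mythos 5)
Your proposal is correct and follows essentially the same route as the paper: decompose the discrete system into the three cascaded subproblems (the discrete analogues of \eqref{eq:subA-1}--\eqref{eq:subA-3}), obtain $g_h=m_h=0$ from \textbf{A1} by testing with discrete gradients, and then apply the C\'ea lemma together with the second Strang lemma (your ``data perturbation'' step) to each subproblem in turn. The only detail worth making explicit is that in items (2) and (3) the perturbation functional must be integrated by parts, e.g.\ $(\nabla\times(\ur-\ur{}_h),\upsi{}_h)=(\ur-\ur{}_h,\nabla\times\upsi{}_h)$, so that it is controlled by the $L^2$ norm $\|\ur-\ur{}_h\|_{0,\Omega}$ rather than by $\|\nabla\times(\ur-\ur{}_h)\|_{0,\Omega}$.
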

\begin{proof}
Similar to Lemma \ref{lem:decomp}, we can decompose \eqref{eq:dismxdvfA'} to three subproblems and solve them sequentially. Then the lemma follows from the Ce\'a lemma and the second Strang lemma directly. We only have to note that, as $\dv\uf=0$, it follows that $g_h=0$ and $m_h=0$ by {\bf A1}, and since $g=0$ and $m=0$, $\inf_{h_h\in H^1_{h0}}\|g-h_h\|_{1,\Omega}=0$ and $\inf_{n_h\in H^1_{h0}}\|m-n_h\|=0$. 
\end{proof}
\begin{remark}
By Lemma \ref{lem:errA}, the error $\|m-m_h\|_{1,\Omega}+\|\uu-\uu{}_h\|_{\curl,\Omega}$ could be comparable to $\|\uphi-\uphi{}_h\|_{0,\Omega}$. This implies that when $H^{1,a}_{h0}$ and $H^a_{h0}(\curl)$ are chosen appropriately, a higher order convergence rate of $\uu$ may be expected. Indeed, for general $\uf$ (not assuming $\uf\in \undertilde{H}{}^1(\Omega)$), that we choose $H^a_{h0}(\curl)$ bigger than $H^b_{h0}(\curl)$ (meanwhile $H^{1,a}_{h0}$ bigger than $H^{1,b}_{h0}$) coincides with the higher smoothness of $\uu$ than that of $\ur$ on convex polyhedrons. However, if $H^{1,a}_{h0}\neq H^{1,b}_{h0}$, $V_h\neq V_h'$ and the problem is no longer symmetric, which may bring some extra difficulty. This is why we still discuss the choice $V_h=V_h'$ in, e.g., eigenvalue computation and other applications. 
\end{remark}

\subsection{Discretize Problem (B$'$)}

Define $U_h:=H^1_{h0}\times H_{h0}(\curl)\times\undertilde{H}{}^1_{h0}\times H_{h0}(\curl)\times L^2_{h0}$. The discretized mixed formulation is to find $(m_h,\uu{}_h,\uphi{}_h,\ur{}_h,p_h)\in U_h$, such that, for any $(n_h,\uv{}_h,\upsi{}_h,\us{}_h,q_h)\in U_h$,

\begin{equation}\label{eq:dismxdvpB'}
\left\{
\begin{array}{ccccccll}
&&&(\ur{}_h,\nabla n_h)&&& = 0 & 
\\
&(\uu{}_h,\uv{}_h)&&-(\nabla\times\ur{}_h,\nabla\times\uv{}_h)&& &=(\uf,\uv{}_h)&
\\
&&(\nabla\uphi{}_h,\nabla\upsi{}_h)&+(\nabla\times\ur{}_h,\upsi{}_h)& &+(p_h,\dv\upsi{}_h) & =0 & 
\\
(\nabla m_h,\us{}_h)&-(\nabla\times\uu{}_h,\nabla\times\us{}_h)& (\uphi{}_h,\nabla\times\us{}_h) & &&&=0 & 
\\
&&(\dv\uphi{}_h,q_h)&&&&=0. &
\end{array}
\right.
\end{equation}
The stability and the convergence of the scheme are surveyed in the two lemmas below.
\begin{lemma}
Provided assumptions \textbf{A1}$\sim$\textbf{A3}, the problem \eqref{eq:dismxdvpB'} is well-posed on $U_h$.
\end{lemma}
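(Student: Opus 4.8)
The plan is to mimic the proof of Lemma~\ref{lem:wellposeA'} (for the continuous mixed system of {\bf(A$'$)}), since the structure of \eqref{eq:dismxdvpB'} is the discrete analogue of \eqref{eq:mxdvpB'}, which itself was handled by the same Brezzi-type argument. First I would identify the bilinear forms: $a((m_h,\uu_h,\uphi_h),(n_h,\uv_h,\upsi_h)):=(\uu_h,\uv_h)+(\nabla\uphi_h,\nabla\upsi_h)$ on $[H^1_{h0}\times H_{h0}(\curl)\times\undertilde{H}{}^1_{h0}]^2$, and $b((m_h,\uu_h,\uphi_h),(\us_h,q_h)):=(\nabla m_h,\us_h)-(\nabla\times\uu_h,\nabla\times\us_h)+(\uphi_h,\nabla\times\us_h)+(\dv\uphi_h,q_h)$ on the product with $H_{h0}(\curl)\times L^2_{h0}$. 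Continuity of both forms in the respective product norms is immediate from Cauchy--Schwarz. It then remains to verify (i) coercivity of $a(\cdot,\cdot)$ on the discrete kernel $Z_h:=\{(m_h,\uu_h,\uphi_h):b((m_h,\uu_h,\uphi_h),(\us_h,q_h))=0\ \forall(\us_h,q_h)\in H_{h0}(\curl)\times L^2_{h0}\}$, and (ii) the discrete inf-sup condition for $b(\cdot,\cdot)$.

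For the kernel coercivity: taking $\us_h=\nabla n_h\in\nabla H^1_{h0}$ in the kernel equation and using assumption {\bf A1} (exact sequence $\nabla H^1_{h0}=\{\us_h\in H_{h0}(\curl):\curl\us_h=0\}$) forces $(\nabla m_h,\nabla n_h)=0$ for all $n_h$, hence $m_h=0$. Taking $q_h\in L^2_{h0}$ gives $\dv\uphi_h=0$ in the discrete-test sense; taking a general $\us_h$ and using $m_h=0$ gives $(\uphi_h,\nabla\times\us_h)=(\nabla\times\uu_h,\nabla\times\us_h)$ for all $\us_h\in H_{h0}(\curl)$. Plugging $\us_h=\uu_h$ yields $\|\nabla\times\uu_h\|_{0,\Omega}^2=(\uphi_h,\nabla\times\uu_h)\le\|\uphi_h\|_{0,\Omega}\|\nabla\times\uu_h\|_{0,\Omega}$. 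Then on $Z_h$ one has $\|\uu_h\|_{\curl,\Omega}^2=\|\uu_h\|_{0,\Omega}^2+\|\nabla\times\uu_h\|_{0,\Omega}^2\le\|\uu_h\|_{0,\Omega}^2+\|\uphi_h\|_{0,\Omega}^2\lesssim a((m_h,\uu_h,\uphi_h),(m_h,\uu_h,\uphi_h))$ using the Poincar\'e inequality $\|\uphi_h\|_{0,\Omega}\lesssim\|\nabla\uphi_h\|_{0,\Omega}$ on $\undertilde{H}{}^1_{h0}\subset\undertilde{H}{}^1_0(\Omega)$; together with $m_h=0$ and $\|\uphi_h\|_{1,\Omega}\lesssim\|\nabla\uphi_h\|_{0,\Omega}$ this gives coercivity on $Z_h$ in the full product norm.

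For the discrete inf-sup: given nonzero $(\us_h,q_h)\in H_{h0}(\curl)\times L^2_{h0}$, decompose $\us_h=\us_h^1+\us_h^2$ with $\us_h^1\in\nabla H^1_{h0}$ and $\us_h^2\in(\nabla H^1_{h0})^\perp$ (orthogonal in $L^2$ inside $H_{h0}(\curl)$). Using {\bf A3} pick $\uphi_h\in\undertilde{H}{}^1_{h0}$ with $(\dv\uphi_h,q_h)=\|q_h\|_{0,\Omega}^2$ and $\|\uphi_h\|_{1,\Omega}\lesssim\|q_h\|_{0,\Omega}$; pick $m_h\in H^1_{h0}$ with $\nabla m_h=\us_h^1$ (possible by {\bf A1}), so $\|m_h\|_{1,\Omega}\lesssim\|\us_h^1\|_{0,\Omega}\le\|\us_h\|_{\curl,\Omega}$; and pick $\uu_h\in(\nabla H^1_{h0})^\perp$ solving the discrete curl-curl problem $(\nabla\times\uu_h,\nabla\times\uv_h)=(\uphi_h-\nabla\times\us_h,\nabla\times\uv_h)$... wait, rather $(\nabla\times\uu_h,\nabla\times\uv_h)=(\uphi_h,\nabla\times\uv_h)-(\nabla\times\us_h,\nabla\times\uv_h)$? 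The correct choice, paralleling the continuous proof, is to require $(\uphi_h-\nabla\times\uu_h,\nabla\times\uv_h)=(\nabla\times\us_h,\nabla\times\uv_h)$ for all $\uv_h\in H_{h0}(\curl)$, which is solvable on $(\nabla H^1_{h0})^\perp$ thanks to {\bf A1}--{\bf A2} (the discrete Poincar\'e bound gives the needed coercivity of $(\nabla\times\cdot,\nabla\times\cdot)$ there), with $\|\uu_h\|_{\curl,\Omega}\lesssim\|\uphi_h\|_{0,\Omega}+\|\us_h\|_{\curl,\Omega}\lesssim\|q_h\|_{0,\Omega}+\|\us_h\|_{\curl,\Omega}$. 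With these choices $b((m_h,\uu_h,\uphi_h),(\us_h,q_h))=(\us_h^1,\us_h^1)+(\nabla\times\us_h,\nabla\times\us_h)+(q_h,q_h)=\|\us_h^1\|_{0,\Omega}^2+\|\nabla\times\us_h\|_{0,\Omega}^2+\|q_h\|_{0,\Omega}^2\gtrsim\|\us_h\|_{\curl,\Omega}^2+\|q_h\|_{0,\Omega}^2$, while the trial norm $\|m_h\|_{1,\Omega}+\|\uu_h\|_{\curl,\Omega}+\|\uphi_h\|_{1,\Omega}\lesssim\|\us_h\|_{\curl,\Omega}+\|q_h\|_{0,\Omega}$, which is exactly the inf-sup estimate. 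Brezzi's theorem then delivers well-posedness of \eqref{eq:dismxdvpB'} on $U_h$.

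The main obstacle is the solvability and stability of the auxiliary discrete curl-curl subproblem used to construct $\uu_h$ in the inf-sup step: one needs that on $(\nabla H^1_{h0})^\perp$ the form $(\nabla\times\cdot,\nabla\times\cdot)$ is coercive and that the right-hand side $\uv_h\mapsto(\uphi_h-\nabla\times\us_h,\nabla\times\uv_h)$... is actually of the form tested only against curls, hence well-defined on the quotient; this is precisely where assumptions {\bf A1} and {\bf A2} are essential, and is the discrete counterpart of the stable Helmholtz decomposition of $H_0(\curl,\Omega)$ used in Lemma~\ref{lem:decomp}. Everything else is a routine transcription of the continuous argument, and, as in the earlier discrete lemma, if one does not insist on a single $H_{h0}(\curl)$ one may alternatively split \eqref{eq:dismxdvpB'} into the three sequential subproblems and invoke {\bf A1}--{\bf A3} on each.
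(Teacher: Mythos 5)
Your proof is correct and follows exactly the route the paper intends: the paper omits the proof of this lemma, but its continuous counterpart for {\bf(B$'$)} and the discrete lemma for {\bf(A$'$)} are both handled by the same Brezzi verification, and your argument is a faithful discrete transcription using {\bf A1} for $m_h=0$ and the gradient splitting, {\bf A2} for the solvability of the auxiliary curl-curl problem, and {\bf A3} for the divergence part. One small caveat: your closing remark about splitting \eqref{eq:dismxdvpB'} into three sequential subproblems does not apply here --- unlike \eqref{eq:dismxdvfA'}, the {\bf(B$'$)} system is cyclically coupled ($\ur_h$ depends on $\uu_h$, $\uphi_h$ on $\ur_h$, and $\uu_h$ on $\uphi_h$), which is precisely why the paper treats it as one saddle-point system rather than sequentially.
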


\begin{lemma}\label{lem:errB}
Let $(m,\uu,\uphi,\ur,p)$ and $(m_h,\uu{}_h,\uphi{}_h,\ur{}_h,p_h)$ be the solutions of \eqref{eq:mxdvpB'} and \eqref{eq:dismxdvpB'}, respectively. 
\begin{enumerate}
\item $m_h=0$;
\item $\displaystyle\|\uphi-\uphi{}_h\|_{1,\Omega}+\|p-p_h\|_{0,\Omega}
\\
\leqslant C\inf_{(n_h,\uv{}_h,\upsi{}_h,\us{}_h,q_h)\in U_h}[\|\uu-\uv{}_h\|_{\curl,\Omega}+\|\uphi-\upsi{}_h\|_{1,\Omega}+\|\ur-\us{}_h\|_{\curl,\Omega}+\|p-q_h\|_{0,\Omega}]$;
\item $\displaystyle\|\uu-\uu{}_h\|_{\curl,\Omega}+\|\ur-\ur{}_h\|_{\curl,\Omega}
\leqslant C\Big[
\inf_{\uv{}_h,\us{}_h\in H_{h0}(\curl)}(\|\uu-\uv{}_h\|_{\curl,\Omega}+\|\ur-\us{}_h\|_{\curl,\Omega})+\|\uphi-\uphi{}_h\|_{0,\Omega}\Big].
$
\end{enumerate}
\end{lemma}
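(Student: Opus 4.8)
The plan is to mimic the decomposition used for \eqref{eq:mxdvfA'} in Lemma \ref{lem:decomp} and Lemma \ref{lem:errA}, but taking care that the analogue of the first subsystem of \eqref{eq:dismxdvpB'} is now \emph{coupled} with the $\uu$-component through the term $(\uu{}_h,\uv{}_h)$. Concretely, I would observe that the system \eqref{eq:dismxdvpB'} still splits into three blocks that can be solved sequentially: first the block in the unknowns $(m_h,\uu{}_h,\ur{}_h)$ with test functions $(n_h,\uv{}_h,\us{}_h)$ — wait, this is not quite sequential as written; the cleaner route is to reorganise \eqref{eq:dismxdvpB'} into (i) a mixed problem for $(\ur{}_h,g_h)$-type pair coupled to $\uu{}_h$ through the $(\uu{}_h,\uv{}_h)$ mass term, giving a well-posed saddle-point problem whose stability follows from \textbf{A1}, \textbf{A2}; (ii) the Stokes-type block for $(\uphi{}_h,p_h)$ with data $\nabla\times\ur{}_h$, stable by \textbf{A3}; and (iii) the curl-block for $(\uu{}_h,m_h)$ with data $\uphi{}_h$, again stable by \textbf{A1}, \textbf{A2}. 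The statement $m_h=0$ follows immediately: testing the fourth line with $\us{}_h=\nabla n_h\in\nabla H^1_{h0}\subset H_{h0}(\curl)$ (by \textbf{A1}) kills the two curl-terms, leaving $(\nabla m_h,\nabla n_h)=0$ for all $n_h$, hence $m_h=0$.

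For parts (2) and (3) I would invoke the Ce\'a lemma (the scheme is conforming, $U_h\subset U$) together with the standard a priori estimate that each of the three sub-blocks satisfies, propagating the consistency error from block to block exactly as in the proof of Lemma \ref{lem:errA}. For part (2): once the discrete $(\uphi{}_h,p_h)$ is identified as the mixed finite element approximation of a Stokes-type problem with right-hand side $-(\nabla\times\ur{}_h,\cdot)$, the second Strang lemma gives $\|\uphi-\uphi{}_h\|_{1,\Omega}+\|p-p_h\|_{0,\Omega}$ bounded by the best approximation of $(\uphi,p)$ in $\undertilde{H}{}^1_{h0}\times L^2_{h0}$ plus the data-perturbation term $\|\nabla\times(\ur-\ur{}_h)\|_{(\undertilde{H}{}^1_{h0})'}\lesssim\|\ur-\ur{}_h\|_{\curl,\Omega}$, which in turn (from the coupled first block) is controlled by $\inf_{\uv{}_h,\us{}_h}(\|\uu-\uv{}_h\|_{\curl,\Omega}+\|\ur-\us{}_h\|_{\curl,\Omega})$; collecting the best-approximation terms over all components of $U_h$ yields the stated bound. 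For part (3): the block for $(\uu{}_h,m_h)$ is, by \textbf{A1} and \textbf{A2}, a stable mixed problem with data $\uphi{}_h$, so Ce\'a plus Strang give $\|\uu-\uu{}_h\|_{\curl,\Omega}$ bounded by $\inf_{\uv{}_h\in H_{h0}(\curl)}\|\uu-\uv{}_h\|_{\curl,\Omega}+\|\uphi-\uphi{}_h\|_{0,\Omega}$; and $\|\ur-\ur{}_h\|_{\curl,\Omega}$ is handled simultaneously since, in the coupled first block, $\ur{}_h$ depends on $\uu{}_h$ only through the lower-order $L^2$ pairing. Adding the two estimates gives the stated inequality.

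The main obstacle I anticipate is exactly this coupling in the first block: unlike in Problem \textbf{(A$'$)}, where the $\ur$-equation decoupled completely from $\uu$ (because \textbf{(A)} has no zeroth-order term), here the line $(\uu{}_h,\uv{}_h)-(\nabla\times\ur{}_h,\nabla\times\uv{}_h)=(\uf,\uv{}_h)$ ties $\uu{}_h$ and $\ur{}_h$ together, so the ``sequential'' solution of Lemma \ref{lem:decomp} is not literally available. The fix is to treat $(\uu{}_h,\ur{}_h)$ jointly: one shows that the bilinear form $(\uu{}_h,\uv{}_h)-(\nabla\times\ur{}_h,\nabla\times\uv{}_h)-(\nabla\times\uu{}_h,\nabla\times\us{}_h)$, augmented by the two gradient-constraint lines via \textbf{A1}–\textbf{A2}, is a well-posed mixed system on $(H^1_{h0}\times H_{h0}(\curl))^2$ — its inf-sup structure being the discrete counterpart of the one verified in the continuous case. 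Once that joint well-posedness and the corresponding quasi-optimality are in hand, the remaining propagation through the Stokes block and the final curl block is routine, following verbatim the pattern of Lemmas \ref{lem:decomp} and \ref{lem:errA}, so I would state those steps briefly and refer back.
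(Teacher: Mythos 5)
Your treatment of items (1) and (3) is essentially the paper's argument: $m_h=0$ follows from testing the fourth line with gradients (i.e.\ from \textbf{A1}), and for item (3) your ``fix'' --- treating $(m_h,\uu{}_h,\ur{}_h)$ jointly as the solution of the well-posed reduced mixed system formed by the first, second and fourth lines of \eqref{eq:dismxdvpB'}, with $\uphi{}_h$ entering only as data --- is exactly what the paper does via \eqref{eq:reducedB} and \eqref{eq:subproblemBdis}, with C\'ea plus the second Strang lemma producing the perturbation term $\|\uphi-\uphi{}_h\|_{0,\Omega}$. You are also right that the obstruction to the sequential decomposition of Lemma \ref{lem:decomp} is the coupling introduced by the mass term; the paper says as much in the remark following Theorem \ref{thm:regB}.

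The gap is in your argument for item (2), which is circular as written. You bound $\|\uphi-\uphi{}_h\|_{1,\Omega}+\|p-p_h\|_{0,\Omega}$ by the best approximation of $(\uphi,p)$ plus $\|\ur-\ur{}_h\|_{\curl,\Omega}$, and then claim the latter ``is controlled by $\inf(\|\uu-\uv{}_h\|_{\curl,\Omega}+\|\ur-\us{}_h\|_{\curl,\Omega})$ from the coupled first block.'' But that block (lines 1, 2 and 4) has $\uphi{}_h$ as data --- this is precisely what your own item-(3) argument exploits --- so its error estimate necessarily carries a term $\|\uphi-\uphi{}_h\|_{0,\Omega}$. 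Chaining the two bounds gives $\|\uphi-\uphi{}_h\|_{1,\Omega}\leqslant C(\mathrm{b.a.}+\|\uphi-\uphi{}_h\|_{0,\Omega})$, which does not close without a smallness assumption on the constants: the coupling runs in a cycle $\ur{}_h\to\uphi{}_h\to\uu{}_h\to\ur{}_h$, and no ordering of the blocks breaks it. The paper avoids this entirely by proving item (2) from the global quasi-optimality (C\'ea lemma) of the full five-field saddle-point system \eqref{eq:dismxdvpB'}, whose discrete well-posedness under \textbf{A1}--\textbf{A3} is already established; that is why the right-hand side of item (2) involves the best approximation of \emph{all} components of $U_h$, not just $(\uphi,p)$. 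Replacing your block-propagation argument for item (2) by this one-line appeal to the global stability repairs the proof. (A small notational slip: there is no $g_h$ in Problem \textbf{(B$'$)}; the multiplier pair you want in the first block is $(m_h,\ur{}_h,\uu{}_h)$ as above.)
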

\begin{proof}
The first item follows from {\bf A1}, and the second follows from the Cea lemma. Note that $(m,\uu,\ur)$ solves the problem
\begin{equation}\label{eq:reducedB}
\left\{
\begin{array}{cccll}
&&(\ur,\nabla n) & = 0 &\forall\,n\in H^1_{0}(\Omega)
\\
& (\uu,\uv)& -(\nabla\times \ur,\nabla\times\uv)& =(\uf,\uv)&\forall\,\uv\in H_0(\curl,\Omega)
\\
(\nabla m,\us)&-(\nabla\times\uu,\nabla\times\us)&&=-(\uphi,\nabla\times\us)&\forall\,\us\in H_0(\curl,\Omega),
\end{array}
\right.
\end{equation}
and $(m_h,\uu{}_h,\ur{}_h)$ solves the finite element problem:
\begin{equation}\label{eq:subproblemBdis}
\left\{
\begin{array}{cccll}
&&(\ur{}_h,\nabla n_h) & = 0 &\forall\,n_h\in H^1_{h0}
\\
& (\uu{}_h,\uv{}_h)& -(\nabla\times\ur{}_h,\nabla\times\uv{}_h)& =(\uf,\uv{}_h)&\forall\,\uv{}_h\in H_{h0}(\curl)
\\
(\nabla m_h,\us{}_h)&-(\nabla\times\uu{}_h,\nabla\times\us{}_h)&&=-(\uphi{}_h,\nabla\times\us{}_h)&\forall\,\us{}_h\in H_{h0}(\curl).
\end{array}
\right.
\end{equation}
The third item follows from the Ce\'a lemma and the second Strang lemma.
\end{proof}

Lemma \ref{lem:errB} reveals that the error $\|\uu-\tilde{\uu}{}_h\|_{\curl,\Omega}+\|\ur-\tilde{\ur}{}_h\|_{\curl,\Omega}$ can be comparable with $\|\uphi-\uphi{}_h\|_0$. This hints us to use some bigger $H_{h0}(\curl)$ to expect higher accuracy of $\uu$ and $\ur$ than that of $\uphi$ and $p$ for smooth $\uu$ and $\ur$.

\subsection{Examples of finite element quartos}

\subsubsection{Problem {\bf (A$'$)}}
For problem {\bf (A$'$)}, we choose 
\begin{equation}
H^{1,a}_{h0}:=\mathcal{L}^2_{h0},\, H^a_{h0}(\curl):=N^2_{h0},\, \undertilde{H}{}^1_{h0}:=(\mathcal{L}^2_{h0})^2,\, H^{1,b}_{h0}:=\mathcal{L}^1_{h0},\, H^b_{h0}(\curl):=N^1_{h0},\, L^2_{h0}:=\mathcal{L}^1_{h}\cap L^2_0(\Omega).
\end{equation}
The assumptions {\bf A1}$\sim${\bf A3} can be verified. Particularly, {\bf A1} and {\bf A3} can be found in \cite{Boffi.D;Brezzi.F;Fortin.M2013}, and {\bf A2} can be found in \cite{Hiptmair.R2002,AFW2006,Kikuchi.F1989}. Its convergence then follows from Lemma \ref{lem:errA}. We here present a specific estimate on convex polyhedrons.
\begin{lemma}\label{lem:errestA}
Let $\Omega$ be a convex polyhedron and $\uf\in\undertilde{L}^2$, $\dv\uf=0$. Let $(m,\uu,\uphi,p,\ur,g)\in V$ and $(m_h,\uu{}_h,\uphi{}_h,p_h,\ur{}_h,g_h)\in V_h$ be the solutions of \eqref{eq:mxdvfA'} and \eqref{eq:dismxdvfA'}, respectively. Then 
\begin{enumerate}
\item $g_h=0=g$ and $m_h=0=m$;
\item $\displaystyle\|\ur-\ur{}_h\|_{\curl,\Omega}\leqslant Ch(\|\ur\|_{1,\Omega}+\|\curl\ur\|_{1,\Omega})\leqslant Ch\|\uf\|_{0,\Omega}$;
\item $\displaystyle \|\uphi-\uphi{}_h\|_{1,\Omega}+\|p-p_h\|_{0,\Omega}\leqslant Ch(\|\uphi\|_{2,\Omega}+\|p\|_{1,\Omega}+\|\ur\|_{1,\Omega}+\|\curl\ur\|_{1,\Omega})\leqslant Ch\|\uf\|_{0,\Omega}$;
\item $\|\uphi-\uphi{}_h\|_0\leqslant Ch^2\|\uf\|_{0,\Omega}$;
\item $\displaystyle \|\uu-\uu{}_h\|_{\curl,\Omega}\leqslant C(h^2(\|\uu\|_{2,\Omega}+\|\curl\uu\|_{2,\Omega})+\|\uphi-\uphi{}_h\|_{0,\Omega}) \leqslant Ch^2\|\uf\|_{0,\Omega}$.
\end{enumerate}
\end{lemma}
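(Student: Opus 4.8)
The idea is to run the three decoupled subproblems \eqref{eq:subA-1}--\eqref{eq:subA-3} of Lemma~\ref{lem:decomp} in order, substituting into the abstract quasi-optimal bounds of Lemma~\ref{lem:errA} the concrete $\undertilde{H}{}^2$/$H^1$ regularity of Theorem~\ref{thm:regA} together with the standard interpolation estimates for $\mathcal{L}^k_{h0}$ and $N^k_{h0}$, and then reducing each right-hand side to $\|\uf\|_{0,\Omega}$ by means of the a priori bounds already recorded in Lemma~\ref{lem:wellposeA'} and in the proof of Theorem~\ref{thm:regA}. Item~(1) is immediate: $m=g=0$ by Theorem~\ref{thm:regA}, and $m_h=g_h=0$ by item~(4) of Lemma~\ref{lem:errA} (which, via \textbf{A1} and $\dv\uf=0$, forces these two components to vanish). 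For item~(2), Lemma~\ref{lem:errA}(1) gives $\|\ur-\ur{}_h\|_{\curl,\Omega}\lesssim\inf_{\us{}_h\in N^1_{h0}}\|\ur-\us{}_h\|_{\curl,\Omega}$; since $\ur\in\undertilde{H}{}^2(\Omega)$ and $\curl\ur\in\undertilde{H}{}^1(\Omega)$ by Theorem~\ref{thm:regA}, the lowest-order edge interpolant bounds the right-hand side by $h(\|\ur\|_{1,\Omega}+\|\curl\ur\|_{1,\Omega})$, and the final estimate $\lesssim h\|\uf\|_{0,\Omega}$ follows from $\|\curl\ur\|_{1,\Omega}\lesssim\|\curl\curl\ur\|_{0,\Omega}=\|\uf\|_{0,\Omega}$ (Lemma~\ref{lem:GRembd} applied to $\curl\ur\in H(\curl,\Omega)\cap H_0(\dv,\Omega)$) and $\|\ur\|_{1,\Omega}\lesssim\|\curl\ur\|_{0,\Omega}$ (Lemma~\ref{lem:CWZZlem3.2}, using $\dv\ur=0$). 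For item~(3), Lemma~\ref{lem:errA}(2) reduces the estimate to the best approximation of $(\uphi,p)$ in $\undertilde{H}{}^1_{h0}\times L^2_{h0}$ plus $\|\ur-\ur{}_h\|_{0,\Omega}$: quadratic vector Lagrange and piecewise-linear interpolation contribute $O(h)$ because $\uphi\in\undertilde{H}{}^2(\Omega)$ and $p\in H^1(\Omega)$ by Theorem~\ref{thm:regA}, $\|\ur-\ur{}_h\|_{0,\Omega}\le\|\ur-\ur{}_h\|_{\curl,\Omega}\lesssim h\|\uf\|_{0,\Omega}$ by item~(2), and the $3$D Stokes regularity $\|\uphi\|_{2,\Omega}+\|p\|_{1,\Omega}\lesssim\|\curl\ur\|_{0,\Omega}\lesssim\|\uf\|_{0,\Omega}$ closes the chain.

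Item~(4) is the only step requiring genuine work: an Aubin--Nitsche duality argument for the Stokes subsystem \eqref{eq:subA-2}, carried out on the convex polyhedron so that the dual Stokes pair $(\uw,s)$ satisfies $\uw\in\undertilde{H}{}^2(\Omega)$, $s\in H^1(\Omega)$ with $\|\uw\|_{2,\Omega}+\|s\|_{1,\Omega}\lesssim\|\uphi-\uphi{}_h\|_{0,\Omega}$. Testing the dual problem with $\uphi-\uphi{}_h$, invoking the discrete error identity $(\nabla(\uphi-\uphi{}_h),\nabla\upsi{}_h)+(p-p_h,\dv\upsi{}_h)=-(\curl(\ur-\ur{}_h),\upsi{}_h)$ on $\undertilde{H}{}^1_{h0}$, the constraint orthogonality $(\dv(\uphi-\uphi{}_h),q_h)=0$, and the interpolation estimates, all the ``interior'' terms turn out to be $O\big(h(\|\uphi-\uphi{}_h\|_{1,\Omega}+\|p-p_h\|_{0,\Omega})\|\uphi-\uphi{}_h\|_{0,\Omega}\big)=O(h^2\|\uf\|_{0,\Omega}\|\uphi-\uphi{}_h\|_{0,\Omega})$ by item~(3). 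The single term not automatically of order $h^2$ is $(\curl(\ur-\ur{}_h),\uw)$, which after an integration by parts (legitimate since $\uw$ and $\ur-\ur{}_h$ both have vanishing tangential traces) equals $(\ur-\ur{}_h,\curl\uw)$, and the crude bound $\|\ur-\ur{}_h\|_{0,\Omega}\|\uw\|_{1,\Omega}$ only controls it to order $h$. The way past this --- and the main obstacle of the whole proof --- is to first establish the sharp $L^2$ estimate $\|\ur-\ur{}_h\|_{0,\Omega}\lesssim h^2\|\uf\|_{0,\Omega}$ for the curl--curl subsystem \eqref{eq:subA-1}, itself a duality argument whose dual problem $\curl\curl\uz=\ur-\ur{}_h$, $\dv\uz=0$ has $\uz\in\undertilde{H}{}^2(\Omega)$ with $\|\uz\|_{2,\Omega}\lesssim\|\ur-\ur{}_h\|_{0,\Omega}$ on a convex polyhedron (Lemmas~\ref{lem:GRembd} and \ref{lem:CWZZlem3.2}), the Galerkin orthogonality $(\curl(\ur-\ur{}_h),\curl\uv{}_h)=0$ on $N^1_{h0}$ supplying the extra order while \textbf{A1}--\textbf{A2} (and $g=g_h=0$) keep the irrotational components and the associated divergence-constraint multiplier under control. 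Feeding this back gives $(\ur-\ur{}_h,\curl\uw)\lesssim h^2\|\uf\|_{0,\Omega}\|\uphi-\uphi{}_h\|_{0,\Omega}$, whence $\|\uphi-\uphi{}_h\|_{0,\Omega}\lesssim h^2\|\uf\|_{0,\Omega}$.

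For item~(5), Lemma~\ref{lem:errA}(3) gives $\|\uu-\uu{}_h\|_{\curl,\Omega}\lesssim\inf_{\uv{}_h\in N^2_{h0}}\|\uu-\uv{}_h\|_{\curl,\Omega}+\|\uphi-\uphi{}_h\|_{0,\Omega}$; since $\uu,\curl\uu\in\undertilde{H}{}^2(\Omega)$ by Theorem~\ref{thm:regA}, the second-order edge interpolant contributes $h^2(\|\uu\|_{2,\Omega}+\|\curl\uu\|_{2,\Omega})\lesssim h^2\|\uf\|_{0,\Omega}$, using $\|\curl\uu\|_{2,\Omega}=\|\uphi\|_{2,\Omega}\lesssim\|\uf\|_{0,\Omega}$ and $\|\uu\|_{2,\Omega}\lesssim\|\curl\uu\|_{1,\Omega}+\|\dv\uu\|_{1,\Omega}=\|\uphi\|_{1,\Omega}\lesssim\|\uf\|_{0,\Omega}$ (Lemma~\ref{lem:CWZZlem3.2}), and the remaining term is $\lesssim h^2\|\uf\|_{0,\Omega}$ by item~(4); summing proves the claim. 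In short, items~(1)--(3) and (5) are bookkeeping combining already-established regularity and approximation facts, and the only genuinely delicate point is the duality step of item~(4): the sharp $L^2$ estimate for the curl--curl subproblem and the care needed with the divergence-constraint multiplier inside that duality.
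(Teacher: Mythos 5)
Items (1)--(3) and (5) of your plan are sound and coincide with what the paper does: it, too, dispatches everything except the $L^2$ bound on $\uphi-\uphi{}_h$ by citing Lemma \ref{lem:errA}, Theorem \ref{thm:regA} and standard interpolation. The problem is your item (4), specifically the auxiliary claim $\|\ur-\ur{}_h\|_{0,\Omega}\lesssim h^2\|\uf\|_{0,\Omega}$ on which your treatment of the coupling term $(\curl(\ur-\ur{}_h),\uw)$ rests. This estimate is false for the spaces chosen here: $\ur$ is discretized in $H^b_{h0}(\curl)=N^1_{h0}$, the lowest-order first-family N\'ed\'elec space, whose local shape space $(P_0)^3\oplus\ux\times(\hat P_0)^3$ does not contain all of $(P_1)^3$. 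Consequently even the best $L^2$ approximation $\inf_{\us{}_h\in N^1_{h0}}\|\ur-\us{}_h\|_{0,\Omega}$ is genuinely of order $h$ for a generic smooth $\ur$, and no duality argument can push $\|\ur-\ur{}_h\|_{0,\Omega}$ below that barrier. So the ``main obstacle'' you isolate is real, but the door you propose to exit through is closed.

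The correct escape is to never move the curl off the error $\ur-\ur{}_h$. Since the dual Stokes velocity $\uw$ is divergence-free and lies in $\undertilde{H}{}^1_0(\Omega)$, it admits a vector potential, $\uw=\curl\uz$ with $\curl\uz\in\undertilde{H}{}^1(\Omega)$; then
$(\curl(\ur-\ur{}_h),\uw)=(\curl(\ur-\ur{}_h),\curl\uz)=(\curl(\ur-\ur{}_h),\curl(\uz-\uz{}_h))$
for any $\uz{}_h\in N^1_{h0}$, by the Galerkin orthogonality of the curl--curl subproblem \eqref{eq:subA-1} (using $g=g_h=0$), and this is $\lesssim h\|\uf\|_{0,\Omega}\cdot h\|\curl\uz\|_{1,\Omega}\lesssim h^2\|\uf\|_{0,\Omega}\|\uphi-\uphi{}_h\|_{0,\Omega}$. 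This is exactly what the paper's proof achieves, in a packaged form: it runs the duality argument on the \emph{entire} six-field system against an auxiliary discrete solution in $\hat V_h$ (built from the lower-order ``b'' spaces, which reproduces $(\uphi{}_h,p_h,\ur{}_h)$); the dual problem then carries the extra variable $\tilde\uu$ satisfying $(\tilde\uphi,\curl\us)=(\curl\tilde\uu,\curl\us)$, so the dangerous pairing appears as $(\tilde\uphi,\curl(\ur-\ur{}_h))-(\curl\tilde\uu,\curl(\ur-\ur{}_h))$, which vanishes for the exact dual solution and, after inserting interpolants, leaves only products of two $O(h)$ factors. Only $\|\curl(\ur-\ur{}_h)\|_{0,\Omega}=O(h)$ is ever needed. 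If you replace your curl--curl $L^2$ duality step by this vector-potential/orthogonality argument (or adopt the paper's global duality), the rest of your proof goes through.
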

\begin{proof}
We only prove the estimate $\|\uphi-\uphi{}_h\|_{0,\Omega}$ by dual argument, and the remaining follows from Lemma \ref{lem:errA} directly.

Define $\hat{V}_h:=H^{1,b}_{h0}\times H^b_{h0}(\curl)\times \undertilde{H}{}^1_{h0}\times L^2_{h0}\times H^b_{h0}(\curl)\times H^{1,b}_{h0}$, and let $(\hat{m}_h,\hat{\uu}{}_h,\hat{\uphi}{}_h,\hat{p}_h,\hat{\ur}{}_h,\hat{g}_h)\in \hat{V}_h$ be so that  
\begin{equation}\label{eq:dualsystem}
\left\{
\begin{array}{lll}
a((\hat m_h,\hat\uu{}_h,\hat\uphi{}_h),(\hat n_h,\hat\uv{}_h,\hat\upsi{}_h))&+b((\hat p_h,\hat\ur{}_h,\hat g_h),(\hat n_h,\hat\uv{}_h,\hat\upsi{}_h)) &= (\uf,\hat\uv{}_h), 
\\
b((\hat m_h,\hat\uu{}_h,\hat\uphi{}_h), (\hat q_h,\hat\us{}_h,\hat h_h)) &&=0.
\end{array}
\right.
\end{equation}
for any $(\hat n_h,\hat\uv{}_h,\hat\upsi{}_h,\hat q_h,\hat\us{}_h,\hat h_h)\in\hat{V}_h$, where $a(\cdot,\cdot)$ and $b(\cdot,\cdot)$ follows the definitions \eqref{eq:aforA} and \eqref{eq:bforA}.Then $\hat{m}_h=\hat{g}_h=0$, and $(\hat{\uphi}{}_h,\hat{p}_h,\hat{\ur}{}_h)=({\uphi}{}_h,{p}_h,{\ur}{}_h)$. Let $(\tilde m,\tilde\uu,\tilde\uphi,\tilde p,\tilde\ur,\tilde g)\in V$ solve the variational problem
\begin{equation}\label{eq:dualsystem}
\left\{
\begin{array}{lll}
a((\tilde m,\tilde\uu,\tilde\uphi),(n,\uv,\upsi))&+b((\tilde p,\tilde\ur,\tilde g),(n,\uv,\upsi)) &= (\uphi-\uphi{}_h,\upsi), 
\\
&& \quad (n,\uv,\upsi)\in H^1_0(\Omega)\times H_0(\curl,\Omega)\times \undertilde{H}{}^1_0(\Omega)
\\
b((\tilde m,\tilde\uu,\tilde\uphi), (q,\us,h)) &&=0
\\
&&\quad (q,\us,h)\in L^2_0(\Omega)\times H_0(\curl,\Omega)\times H^1_0(\Omega).
\end{array}
\right.
\end{equation}
Then $\tilde\ur=\undertilde{0}$, $\tilde g=0$, $\tilde m=0$, and, by the same virtue as that of Theorem \ref{thm:regA},
$$
\|\tilde\uphi\|_{2,\Omega}+\|\tilde p\|_{1,\Omega}+\|\tilde \uu\|_{2,\Omega}+\|\curl\tilde\uu\|_{2,\Omega}\leqslant C\|\uphi-\uphi{}_h\|_{0,\Omega}.
$$
Thus substituting $(n,\uv,\upsi,q,\us,h)=(m-\hat m_h,\uu-\hat\uu{}_h,\uphi-\uphi{}_h,p-p_h,\ur-\ur{}_h,g-\hat g_h)$ into \eqref{eq:dualsystem}, we have 
\begin{multline*}
\|\uphi-\uphi{}_h\|_{0,\Omega}^2=a((\tilde m,\tilde\uu,\tilde\uphi),(m-\hat m_h,\uu-\hat\uu{}_h,\uphi-\uphi{}_h))
\\
+b((\tilde p,\tilde\ur,\tilde g),(p-\hat p_h,\ur-\ur{}_h,g-\hat g_h))+b((\tilde m,\tilde\uu,\tilde\uphi), (p-\hat p_h,\ur-\ur{}_h,g-\hat g_h)).
\end{multline*}
Further, for any $(\hat n_h,\hat\uv{}_h,\hat\upsi{}_h,\hat q_h,\hat\us{}_h,\hat h_h)\in\hat{V}_h$, the orthogonality holds that
\begin{multline*}
\|\uphi-\uphi{}_h\|_{0,\Omega}^2
=a((\tilde m-\hat n_h,\tilde\uu-\hat\uv{}_h,\tilde\uphi-\hat\upsi{}_h),(m-\hat m_h,\uu-\hat\uu{}_h,\uphi-\uphi{}_h))+
\\
b((\tilde p-\hat q_h,\tilde\ur-\hat\us{}_h,\tilde g-\hat h_h),(p-\hat p_h,\ur-\ur{}_h,g-\hat g_h))+b((\tilde m-\hat n_h,\tilde\uu-\hat\us{}_h,\tilde\uphi), (p-\hat p_h,\ur-\ur{}_h,g-\hat g_h)).
\end{multline*}
Thus
\begin{multline*}
\|\uphi-\uphi{}_h\|_{0,\Omega}^2\leqslant C(\|\tilde m-\hat m_h\|_{0,\Omega}+\|\tilde\uu-\hat\uu{}_h\|_{\curl,\Omega}+\|\tilde\uphi-\hat\uphi{}_h\|_{1,\Omega}+\|\tilde p-\hat p_h\|_{0,\Omega}+\|\tilde\ur-\hat\ur{}_h\|_{\curl,\Omega})
\\
\times \inf_{(\hat n_h,\hat\uv{}_h,\hat\upsi{}_h,\hat q_h,\hat\us{}_h,\hat h_h)\in\hat{V}_h}
(\|\tilde m-\hat n_h\|_{0,\Omega}+\|\tilde\uu-\hat\uv{}_h\|_{\curl,\Omega}+\|\tilde\uphi-\hat\upsi{}_h\|_{1,\Omega}+\|\tilde p-\hat q_h\|_{0,\Omega}+\|\tilde\ur-\hat\us{}_h\|_{\curl,\Omega}).
\end{multline*}
Then by finite element estimate,
\begin{equation}
\|\uphi-\uphi{}_h\|_{0,\Omega}^2\leqslant Ch\|\uf\|_{0,\Omega}\cdot h\|\uphi-\uphi{}_h\|_{0,\Omega},
\end{equation}
which leads to that $\|\uphi-\uphi{}_h\|_{0,\Omega}\leqslant Ch^2\|\uf\|_{0,\Omega}$. This completes the proof.
\end{proof}

\begin{remark}
The convergence analysis with respect to the regularities of $\uu$, $\ur$, and etc. follows directly from Lemma \ref{lem:errA}. We here construct a convergence analysis with respect to $\|\uf\|_{0,\Omega}$, which can exploit the regularity of solution functions to a full extent with economical complexity. When further $\uf\in\undertilde{H}{}^1$, we can set $H^{1,b}_{h0}:=\mathcal{L}^2_{h0}$ and $H^b_{h0}(\curl):=N^2_{h0}$, and obtain higher accuracy of $\|\ur-\ur{}_h\|_{\curl,\Omega}$.
\end{remark}

\subsubsection{Problem {\bf (B$'$)}}
For problem {\bf (B$'$)}, we choose 
\begin{equation}
H^1_{h0}:=\mathcal{L}^2_{h0},\, H_{h0}(\curl):=N^2_{h0},\, \undertilde{H}{}^1_{h0}:=(\mathcal{L}^2_{h0})^2,\,  L^2_{h0}:=\mathcal{L}^1_{h}\cap L^2_0(\Omega).
\end{equation}
The assumptions {\bf A1}$\sim${\bf A3} can be verified. For the convergence rate, we have the lemma below.

\begin{lemma}
Let $\Omega$ be a convex polyhedron and $\uf\in\undertilde{L}^2$. Let $(m,\uu,\uphi,\ur,p)\in U$ and $(m_h,\uu{}_h,\uphi{}_h,\ur{}_h,p_h)\in U_h$ be the solutions of \eqref{eq:mxdvpB'} and \eqref{eq:dismxdvpB'}, respectively. Then 
\begin{enumerate}
\item $m_h=0=m$;
\item $\displaystyle \|\uphi-\uphi{}_h\|_{1,\Omega}+\|p-p_h\|_{0,\Omega}\leqslant Ch\|\uf\|_{0,\Omega}$, $\|\uphi-\uphi{}_h\|_0\leqslant Ch^2\|\uf\|_{0,\Omega}$;
\item $\displaystyle\|\ur-\ur{}_h\|_{\curl,\Omega}+\|\uu-\uu{}_h\|_{\curl,\Omega}\leqslant  Ch\|\uf\|_{0,\Omega}$;
\item if $\uf\in \undertilde{H}{}^1(\Omega)$, then $\displaystyle \|\uu-\uu{}_h\|_{\curl,\Omega}+\|\ur-\ur{}_h\|_{\curl,\Omega}\leqslant Ch^2\|\uf\|_{1,\Omega}$.
\end{enumerate}
\end{lemma}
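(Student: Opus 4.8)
The plan is to mirror the structure of Lemma~\ref{lem:errestA} but without the sequential decomposition: since the underlying bilinear form for \eqref{eq:mxdvpB'} is coercive-plus-inf-sup in Brezzi's sense on $U$, and the discrete spaces satisfy \textbf{A1}$\sim$\textbf{A3}, one gets C\'ea-type quasi-optimality for the full system from the previous two lemmas in this subsection. Item (1) is immediate: $\dv\uf$ need not vanish here, so $m=0$ follows from Theorem~\ref{thm:regB}, and $m_h=0$ follows from \textbf{A1} exactly as in Lemma~\ref{lem:errB}(1) (test the fourth equation of \eqref{eq:dismxdvpB'} with $\us{}_h=\nabla n_h$ and use that $\nabla H^1_{h0}\subset\{\us{}_h:\curl\us{}_h=0\}$ together with the first equation). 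For item (3), the reduced system \eqref{eq:reducedB} and its discretization \eqref{eq:subproblemBdis} give, via C\'ea and the second Strang lemma (Lemma~\ref{lem:errB}(3)), the bound by the best approximation of $\uu,\ur$ in $H_{h0}(\curl)=N^2_{h0}$ plus $\|\uphi-\uphi{}_h\|_{0,\Omega}$; by Theorem~\ref{thm:regB} and the stability bound, $\curl\uu\in\undertilde H^2$, $\curl\ur\in\undertilde H^1$, $\ur\in\undertilde H^1$ with all norms $\lesssim\|\uf\|_{0,\Omega}$, so the $N^2_{h0}$ interpolation estimates give $O(h)$ for those best-approximation terms, and $\|\uphi-\uphi{}_h\|_{0,\Omega}$ is controlled by item (2).

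For item (2), I would first get the energy-norm rate $\|\uphi-\uphi{}_h\|_{1,\Omega}+\|p-p_h\|_{0,\Omega}\lesssim h\|\uf\|_{0,\Omega}$ straight from Lemma~\ref{lem:errB}(2): the right-hand side is $\inf_{U_h}[\|\uu-\uv{}_h\|_{\curl}+\|\uphi-\upsi{}_h\|_{1}+\|\ur-\us{}_h\|_{\curl}+\|p-q_h\|_{0}]$, and Theorem~\ref{thm:regB} supplies $\uphi\in\undertilde H^2$, $p\in H^1$, $\uu\in H(\curl)$ with $\curl\uu=\uphi\in\undertilde H^2$, $\ur\in\undertilde H^1$ with $\curl\ur\in\undertilde H^1$, all bounded by $\|\uf\|_{0,\Omega}$; the chosen spaces $(\mathcal L^2_{h0})^2$, $\mathcal L^1_h\cap L^2_0$, $N^2_{h0}$ then yield $O(h)$ approximation in each term. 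The $L^2$ estimate $\|\uphi-\uphi{}_h\|_{0,\Omega}\lesssim h^2\|\uf\|_{0,\Omega}$ is obtained by the Aubin--Nitsche duality argument: introduce the dual problem with data $\uphi-\uphi{}_h$ on the right-hand side of the $\upsi$-equation (zero data elsewhere), which by the regularity of Theorem~\ref{thm:regB} has a solution with $\|\tilde\uphi\|_{2}+\|\tilde p\|_{1}+\|\tilde\uu\|$-type norms $\lesssim\|\uphi-\uphi{}_h\|_{0,\Omega}$, then use Galerkin orthogonality of the discrete scheme against $U_h$, subtract a best approximant of the dual solution, and bound by the product of two $O(h)$ best-approximation factors — essentially the computation carried out in Lemma~\ref{lem:errestA} transported to the $U$/$U_h$ setting.

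Finally, item (4): when $\uf\in\undertilde H^1(\Omega)$, the remark after Theorem~\ref{thm:regA} (and the parallel observation for \eqref{eq:mxdvpB'}) upgrades $\ur$ and $\curl\ur$ to $\undertilde H^2$ via Lemma~\ref{lem:CWZZlem3.2}, so the best-approximation terms $\inf_{\uv{}_h,\us{}_h\in N^2_{h0}}(\|\uu-\uv{}_h\|_{\curl}+\|\ur-\us{}_h\|_{\curl})$ become $O(h^2)\|\uf\|_{1,\Omega}$, and combined with $\|\uphi-\uphi{}_h\|_{0,\Omega}\lesssim h^2\|\uf\|_{0,\Omega}\lesssim h^2\|\uf\|_{1,\Omega}$ from item (2), Lemma~\ref{lem:errB}(3) delivers the claimed $O(h^2)$ bound. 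The main obstacle is the duality step for the $L^2$ estimate of $\uphi$: one must set up the dual mixed problem so that it is again of the form \eqref{eq:mxdvpB'} with modified data, check that Theorem~\ref{thm:regB}'s regularity applies to it (in particular that the dual solution's ``$\uphi$-component'' lands in $\undertilde H^2$ and its ``$\ur$-component'' vanishes or is harmless), and verify the Galerkin orthogonality carefully since $U_h$ uses the same space for both $H_0(\curl,\Omega)$-slots here — but this is exactly the pattern already executed in Lemma~\ref{lem:errestA}, so the work is bookkeeping rather than a new idea.
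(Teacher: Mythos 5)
Your proposal takes essentially the same route as the paper: the paper's own proof simply says that items (1), (3), (4) and the energy estimate in (2) ``follow from Lemma \ref{lem:errB} directly'' once the regularity of Theorem \ref{thm:regB} is inserted into the best-approximation terms, and that $\|\uphi-\uphi{}_h\|_{0,\Omega}\leqslant Ch^2\|\uf\|_{0,\Omega}$ is obtained ``by repeating the same dual argument as that in the proof of Lemma \ref{lem:errestA}'' --- precisely the two steps you describe. The only difference is one of detail: the paper compresses this into two lines while you spell out the bookkeeping (including the fact that \eqref{eq:mxdvpB'} does not decouple sequentially), so there is nothing substantive to reconcile.
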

\begin{proof}
Similar to Lemma \ref{lem:errestA}, we only have to construct the estimate $\|\uphi-\uphi{}_h\|_{0,\Omega}$, which can be done by repeating the same dual argument as that in the proof of Lemma \ref{lem:errestA}, and we omit it here. The remaining follows from Lemma \ref{lem:errB} directly. The proof is completed.  
\end{proof}

\section{Concluding remarks}
\label{sec:con}

In this paper, we study the fourth order curl problem, and develop for them amiable mixed schemes. We construct equivalent mixed formulation of the two variant model problems; the mixed formulations are stable on standard $H(\curl)$ and $H^1$ spaces. Regularity results are constructed for the mixed formulations, and then the primal ones. Existing finite element quartos that satisfy quite mild assumptions can then lead to stable and convergent discretisation scheme of the model problem. Some finite element examples which are optimal with respect to the regularity are given. The newly developed schemes are friendly in building, analysing and designing optimal solvers. The scheme can be implemented with various familiar finite element packages. 

In the derivation of the equivalent formulations, we figure out the boundary condition that $\nabla\times\uu$ has to satisfy. Particularly, $(\nabla\times\uu)\cdot\mathbf{n}=0$ is a condition which should be satisfied by the variable $\nabla\times\uu$ essentially; also the condition can not be imposed weakly with the duality operated on $H(\curl,\Omega)$. This forces us to bring $\undertilde{H}{}^1_0(\Omega)$ which has bigger capacity for boundary conditions into discussion, and Lemma \ref{lem:h=cd} guarantees the equivalence. 

As only spaces of low smoothness and of low degrees are involved, it is possible to construct finite element discretizations that are nested algebraically and topologically on nested grids; this will provide convenience in designing multilevel methods, and can be utilised in practice (c.f., e.g.,\cite{Chen.L;Hu.X;Wang.M;Xu.J2015,Chen.L2015,Zhang.S;Xi.Y;Ji.X2016,Li.Z;Zhang.S2016}). In this paper, we only consider two familiar variants boundary value problems. In many contexts, second order operators also appear in the boundary value problem, and parameters of various scale may appear in front of operators of different orders; see the model problem in \cite{Zheng.B;Hu.Q;Xu.J2011}. Designing parameter-robust discretisation is interesting and practically important, and will be discussed in future. Also, we focus ourselves on source problems in the present paper. The utilisation of the mixed scheme presented in this paper onto eigenvalue computation and analysis, especially in designing multilevel algorithm (c.f.  \cite{Zhang.S;Xi.Y;Ji.X2016}), will be discussed in future.

\end{document}